\theoremstyle{plain}
\newtheorem{theorem}{Theorem}[section]
\newtheorem{lemma}[theorem]{Lemma}
\newtheorem{proposition}[theorem]{Proposition}
\theoremstyle{definition}
\newtheorem{definition}[theorem]{Definition}
\newtheorem{remark}[theorem]{Remark}
\numberwithin{equation}{section} 
\newcommand\blfootnote[1]{%
  \begingroup
  \renewcommand\thefootnote{}\footnote{#1}%
  \addtocounter{footnote}{-1}%
  \endgroup
}
\newcounter{comcount}
\begin{document}

\title[Riesz transform and its reverse inequality on (QD) manifolds]{On the Riesz transform and its reverse inequality on manifolds with quadratically decaying curvature} 
\date{}
\author{DANGYANG HE}  
\address{Department of Mathematics, Sun Yat-sen University, Guangzhou, China \\
Formerly Department of Mathematics, Macquarie University, Sydney, Australia }
\email{hedy28@mail.sysu.edu.cn}

\begin{abstract}
We study Riesz and reverse Riesz inequalities on manifolds whose Ricci curvature decays quadratically. First, we refine existing results on the boundedness of the Riesz transform by establishing a Lorentz-type endpoint estimate. Next, we explore the relationship between the Riesz and reverse Riesz transforms, proving that the reverse Riesz, Hardy, and weighted Sobolev inequalities are essentially equivalent. Finally, we apply our methods to Grushin spaces, which exhibit a quadratic decay in 'Ricci curvature', verifying that the reverse inequality holds for all $p\in (1,\infty)$ and that the Riesz transform is bounded on $L^p$ for $p\in (1,n)$. Our approach relies on an asymptotic formula for the Riesz potential combined with an extension of the so-called harmonic annihilation method.
\end{abstract}

\maketitle

\tableofcontents

\blfootnote{$\textit{2020 Mathematics Subject Classification.}$ 42B20, 58J35, 35H10}

\blfootnote{$\textit{Keywords and Phrases.}$ Riesz transform, reverse Riesz inequality, Hardy's inequality, weighted Sobolev inequality, Ricci curvature, Grushin spaces.}

\section{Introduction}\label{section_1}

Let $M$ be a complete Riemannian manifold equipped with the Riemannian volume measure $d\mu$ (also denoted by $d\text{vol}$). Denote by $\nabla$ and $\Delta$ the associated Riemannian gradient and Laplace-Beltrami operator, respectively, which satisfy relationship:
\begin{equation*}
    \int g \Delta f d\mu = \int \nabla f \cdot \nabla g d\mu
\end{equation*}
for all $f,g\in C_c^\infty(M)$. One defines the Riesz transform by $\nabla \Delta^{-1/2}$. In \cite{S}, Strichartz initiated the question of what extent of the classical harmonic analysis of the Laplacian on Euclidean space can be extended to the Riemannian setting. A critical problem in this context is to determine the range of boundedness of $\nabla \Delta^{-1/2}$ on $L^p$ spaces. That is, for what $1\le p\le \infty$, one can establish inequality
\begin{equation}\tag{$\textrm{R}_p$}\label{R_p}
    \|\nabla f\|_p \le C  \|\Delta^{1/2}f\|_p,\quad \forall f\in C_c^\infty(M).
\end{equation}
Moreover, the study of \eqref{R_p} also leads to the concept of reverse Riesz inequality
\begin{equation}\tag{$\textrm{RR}_{p}$}\label{eq_RRp}
    \|\Delta^{1/2}f\|_{p} \le C \left\|\nabla f \right\|_{p} \quad \forall f\in C_c^\infty(M).
\end{equation}

For the past several decades, to study the Riesz and reverse Riesz inequalities, much attentions have been paid to the manifolds satisfying the so-called volume doubling condition: for any $x\in M$ and $R\ge r>0$, there exists some $\mu, C>0$ such that
\begin{equation}\tag{$\textrm{D}_\mu$}\label{Doubling}
    \frac{V(x,R)}{V(x,r)} \le C \left(\frac{R}{r}\right)^\mu,
\end{equation}
where $B(x,r)$ denotes the geodesic ball centered at $x$ with radius $r$ and $V(x,r) = \mu(B(x,r))$ denotes the volume of the ball. It is also known that by \cite[Lemma~2.10]{GS_2005}, the doubling condition \eqref{Doubling} implies the following reverse doubling property
\begin{align}\tag{$\textrm{RD}_\nu$}\label{RD}
    C\left(\frac{R}{r}\right)^\nu \le \frac{V(x,R)}{V(x,r)},
\end{align}
for some $0<\nu\le \mu$. 

Another frequently used condition in this context is the Poincaré inequality. Recall that one says $M$ satisfies $L^q\textit{-}$Poincaré inequality $(1\le q<\infty)$ if for any ball $B$ with radius $r>0$,
\begin{align}\tag{$\textrm{P}_q$}\label{P_q}
    \int_B |f-f_B|^q d\mu \le C r^q \int_B |\nabla f|^q d\mu,\quad \forall f\in C^\infty(B),
\end{align}
where $f_B = V(B)^{-1} \int_B f d\mu$.

We start by introducing some results about the Riesz transform. By \cite{CD2}, we know that the conjunction of \eqref{Doubling} and \eqref{DUE} implies \eqref{R_p} for all $1<p\le 2$, where \eqref{DUE} refers to the on-diagonal upper bound for heat kernel:
\begin{equation}\tag{$\textrm{DUE}$}\label{DUE}
    e^{-t\Delta}(x,x) \le \frac{C}{V(x,\sqrt{t})}, \quad \forall x\in M.
\end{equation}
In fact, it follows from \cite{grigor_heatkernel_Gaussian} that under the assumption of \eqref{Doubling}, the condition \eqref{DUE} above self-improves to
\begin{equation}\tag{$\textrm{UE}$}\label{UE}
    e^{-t\Delta}(x,y) \le \frac{C}{V(x,\sqrt{t})} e^{-\frac{d(x,y)^2}{ct}},\quad \forall x,y\in M. 
\end{equation}
While for $p>2$, a result from \cite{ACDH} indicates that under the assumptions of \eqref{Doubling} and $(\textrm{P}_2)$, \eqref{R_p} is equivalent to the $L^p\textit{-}$boundedness of the gradient of the heat semigroup. Recently, Jiang, see \cite{Jiang}, generalized the above criterion, showing that \eqref{R_p} is also equivalent to the reverse Hölder inequality for the gradient of harmonic functions. In \cite{CCH}, the authors consider the connected sum $\mathbb{R}^n \# \mathbb{R}^n$ for $n \ge 3$ (see \cite{GS} for a more detailed discussion of connected sums) and prove that \eqref{R_p} on such manifold only holds for $1<p<n$. Subsequently, the authors of \cite{HS} generalize the above result by considering a class of non-doubling manifolds, which fail to meet both \eqref{Doubling} and $(\textrm{P}_2)$, see also \cite{HNS}. 

The boundedness of the Riesz transform can also be examined from the view of Ricci curvature. Let $o\in M$ be a fixed point. We say $M$ has \textit{quadratically decaying curvature} if its Ricci curvature satisfies property (see \cite{LS} for details):
\begin{align}\tag{QD}\label{QD}
    \textrm{Ric}_x \ge - \frac{\delta^2}{\left(1+r(x)\right)^2} g_x,\quad \forall x\in M,
\end{align}
where $\delta \in \mathbb{R}$, and $g$ stands for the Riemannian metric. Here, we use notation $r(x) = d(x,o)$. Manifolds with conical ends and the connected sums of several copies of $\mathbb{R}^n$ are two typical examples which satisfy this \eqref{QD} condition; see \cite{CCH,GH1,GH2,L,LS}. By \cite{B}, it is well-known that \eqref{R_p} holds for all $1<p<\infty$ on manifolds with non-negative Ricci curvature. 

To go further, let us further introduce two assumptions. The first one is the volume comparison condition. That is, for any $R\ge 1$ and any $x\in \partial B(o,R)$, we have
\begin{equation}\tag{VC}\label{VC}
    V(o,R) \le C V(x,R/2).
\end{equation}
The second assumption is the "relatively connected to an end" condition (we may denote it by $(\textrm{RCE})$ through this article). Suppose $M$ has at most finitely many ends. Then we say $M$ satisfies $(\textrm{RCE})$ if there exists a constant $\theta\in (0,1)$ such that for all $R\ge 1$ and all $x\in \partial B(o,R)$, there is a continuous path $\gamma:[0,1]\to B(o,R) \setminus B(o,\theta R)$ and a geodesic ray $\tau:[0,\infty)\to M\setminus B(o,R)$ such that
\begin{align*}
    \bullet \gamma(0)=x, \gamma(1)=\tau(0),\quad \bullet \textrm{length}(\gamma)\le R/\theta.
\end{align*}
This (RCE) condition generalizes the condition (RCA) introduced in \cite{GS_2005} to the setting of manifolds with finitely many ends. For more details about $(\textrm{RCE})$, we refer readers to \cite{GS_2005,Gilles}. Next, \cite[Theorem~2.4]{Gilles} tells us $\eqref{VC} + \eqref{QD}+(\textrm{RCE})$ implies \eqref{Doubling} and \eqref{DUE}.

By \cite[Theorem~A]{Gilles}, under the assumptions of \eqref{QD}, \eqref{VC}, $(\textrm{RCE})$ and \eqref{RD} $(\nu>2)$ for 'anchored balls', the Riesz transform is bounded on $L^p$ for $1<p<\nu$. Note that this result is sharp in the sense of $\mathbb{R}^n \# \mathbb{R}^n$. 

As noted above, on certain classes of manifolds, inequality \eqref{R_p} only holds over a bounded range of exponents, in contrast to the Euclidean setting. This naturally prompts the question of whether a suitable endpoint estimate for $\nabla \Delta^{-1/2}$ exists. 

Before introducing the endpoint results, we have to recall some notations from Lorentz spaces. We say $f\in L^{p,q}$ $(0<p,q\le \infty)$ if the quasi-norm
\begin{equation*} 
     \|f\|_{(p,q)}=  
    \begin{cases}
    \left( \int_0^{\infty} \left(t^{1/p} f^*(t)\right)^q \frac{dt}{t}   \right)^{1/q}, & q<\infty,\\
     \sup_{t>0} t^{1/p}f^*(t) = \sup_{\lambda>0} \lambda d_f(\lambda)^{1/p}, & q=\infty.
    \end{cases}
\end{equation*}
is finite, where $f^*$ denotes the decreasing rearrangement function: $f^*(t) = \inf\{\lambda>0; d_f(\lambda)\le t\}$, and $d_f$ is the usual distribution function, i.e., $d_f(\lambda) = \mu(\{x; |f(x)| >\lambda  \})$.

In \cite{H}, building on the framework introduced in \cite{HS}, the author establishes a Lorentz-type endpoint estimate. Specifically, it is shown that $\nabla \Delta^{-1/2}$ is bounded on $L^{p_0,1}$, yet it is not bounded from $L^{p_0,p}$ to $L^{p_0,q}$ for any $1<p<\infty$ and $p\le q\le \infty$, where $p_0$ is the endpoint exponent for the boundedness of the Riesz transform. This finding aligns with the result in \cite{L}, which demonstrates that on a class of conic manifolds, $\nabla \Delta^{-1/2}$ fails to be of weak type bounded at the endpoint.

Before presenting our results, let us briefly discuss the motivation behind these endpoint estimates. In \cite{CCH}, the authors study the connected sums $M=\mathbb{R}^n \# \mathbb{R}^n$ and show that on a compactification of $M$, which we denote by $\overline{M}$, the Riesz potential exhibits an asymptotic expansion 
\begin{align}\label{asymptotics}
    \Delta^{-1/2}(x,y) \sim \sum_{j=n-1}^\infty a_j(x) |y|^{-j},\quad x\in \overline{M},\quad |y|\to \partial \overline{M},
\end{align} 
where $a_{n-1}$ is a nontrivial bounded harmonic function on $\overline{M}$. In particular, when $M=\mathbb{R}^n$, a similar asymptotic behavior holds with $a_{n-1}=1$.

Applying a gradient operator in the $x\textit{-}$variable, the maximum principle ensures that $\nabla a_{n-1}$ does not vanish, resulting in a leading term in the $y\textit{-}$variable that decays like $|y|^{1-n}$ near the boundary. Note that the function $|y|^{1-n}$ can only pair boundedly with functions in $L^p$ for $p<n$, which in a sense explains why the boundedness range of the Riesz transform terminates at $p=n$. By contrast, when $M=\mathbb{R}^n$, the gradient annihilates the leading term because $a_{n-1}=1$. Consequently, the kernel then decays like $|y|^{-n}$ near the boundary, making it can be boundedly pair with functions in $L^p$ for all $1<p<\infty$. 

Next, to address the endpoint issue, note that for $M=\mathbb{R}^n \# \mathbb{R}^n$, a straightforward calculation shows that $|y|^{1-n}$ lies in the Lorentz space $L^{\frac{n}{n-1},\infty}(\partial \overline{M})$. By duality, this implies that $|y|^{1-n}$ should be able to pair with functions in $L^{n,1}$. Although the asymptotic expansion \eqref{asymptotics} has only been verified on $\mathbb{R}^n \# \mathbb{R}^n$, one may naturally conjecture that it holds for a broader class of manifolds. 

Our first result supplements \cite[Theorem~A]{Gilles} and can be stated as follows.

\begin{theorem}\label{R_Endpoint}
Let $M$ be a complete Riemannian manifold satisfying \eqref{QD}, \eqref{VC}, $(\textrm{RCE})$ and reverse doubling condition for "anchored balls" i.e.,
\begin{align}\tag{$\textrm{RD}_\nu^{o}$}\label{RD0}
    C\left(\frac{R}{r}\right)^\nu \le  \frac{V(o,R)}{V(o,r)},\quad \forall R\ge r>0.
\end{align}
for some $\nu>2$. Then the Riesz transform, $\nabla \Delta^{-1/2}$, is of restricted weak type $(\nu,\nu)$. That is,
\begin{equation*}
    \|\nabla \Delta^{-1/2}f\|_{(\nu,\infty)} \le C\|f\|_{(\nu,1)}
\end{equation*}
for all $f\in C_c^\infty(M)$.
\end{theorem}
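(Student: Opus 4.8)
The plan is to deduce the restricted weak type $(\nu,\nu)$ bound from the known $L^p$-boundedness of the Riesz transform for $1<p<\nu$ (Gilles, Theorem~A), together with a real-interpolation argument that upgrades the endpoint. First I would recall that, under the stated hypotheses, $\eqref{QD}+\eqref{VC}$ yields $\eqref{Doubling}$ and $\eqref{DUE}$ (hence the full Gaussian bound $\eqref{UE}$ by Grigor'yan), so the heat semigroup has the usual kernel estimates; and by Gilles's theorem the Riesz transform $T=\nabla\Delta^{-1/2}$ is bounded on $L^p$ for every $1<p<\nu$. The goal is therefore to understand the behaviour exactly at $p=\nu$, where boundedness on $L^\nu$ is expected to fail but a restricted weak type estimate survives.

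Let me sketch the mechanism. I would split $T=T(I-e^{-\Delta}) + Te^{-\Delta}$, or more precisely use a Calderón–Zygmund-type decomposition of $f\in L^{\nu,1}$ adapted to the doubling space $(M,d,\mu)$: at height $\lambda$, write $f=g+b$ with $g$ the good part (bounded by $C\lambda$, controlled in $L^2$ by $\|f\|_1\lambda$) and $b=\sum_i b_i$ supported on disjoint balls $B_i$ of total measure $\lesssim \lambda^{-\nu}\|f\|_{(\nu,1)}^\nu$, with $\int b_i=0$. For $g$ one uses $L^2$-boundedness of $T$ (which follows from $\Delta$ being self-adjoint and $|\nabla\Delta^{-1/2}f|_2^2 = \langle\Delta^{-1/2}f,\Delta^{-1/2}f\rangle\langle\cdot\rangle$... concretely $\|\nabla\Delta^{-1/2}f\|_2=\|f\|_2$) and Chebyshev. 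The bad part is the crux: one writes $Tb_i = T(I-e^{-r_i^2\Delta})b_i + Te^{-r_i^2\Delta}b_i$, where $r_i$ is the radius of $B_i$. The first term is handled by the usual off-diagonal argument together with the Gaussian bound and the cancellation $\int b_i=0$, giving an $L^1$ estimate away from a controlled enlargement of $\bigcup B_i$. The second term, $Te^{-r_i^2\Delta}b_i = \nabla\Delta^{-1/2}e^{-r_i^2\Delta}b_i$, is the genuinely delicate piece, and here one must exploit the Lorentz smallness of $f$: rather than estimating in $L^1$, one sums over $i$ using the $L^{\nu,1}$ structure (the dyadic layers of $f^*$) and the fact that $\nabla\Delta^{-1/2}e^{-r^2\Delta}$ has a kernel that decays like the Riesz kernel at scale $r$, pairing boundedly against the atomic pieces in a weak-$L^\nu$ fashion. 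This is where the asymptotic heuristic from the introduction — the leading $|y|^{1-n}$-type term lying exactly in $L^{\nu,\infty}$ on the boundary and thus pairing with $L^{\nu,1}$ — is made rigorous.

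Alternatively, and perhaps more cleanly, I would phrase the whole thing as interpolation: one establishes the bound $\|Tf\|_{(\nu,\infty)}\le C\|f\|_{(\nu,1)}$ directly by testing on characteristic functions $f=\mathbf 1_E$ with $\mu(E)<\infty$ (restricted weak type is, by definition, the characteristic-function estimate), decomposing $E$ dyadically over the ends/annuli $A_k=B(o,2^{k+1})\setminus B(o,2^k)$, applying on each piece the honest $L^p$ bound for a well-chosen $p=p(k)<\nu$ close to $\nu$, and then summing the resulting geometric-type series using $\eqref{RD0}$ (which forces $V(o,R)\gtrsim R^\nu$) to control the number of relevant scales. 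The reverse-doubling exponent $\nu>2$ enters precisely to make the tail sum converge at the borderline, mirroring how $n$ enters for $\mathbb R^n\#\mathbb R^n$.

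The main obstacle I anticipate is the borderline term $\nabla\Delta^{-1/2}e^{-r^2\Delta}b_i$ (equivalently, the diagonal/critical-scale contribution): away from the global scale the kernel estimates are comfortably summable, but at the critical scale one only has weak-type control, and one must verify that the $L^{\nu,1}\to L^{\nu,\infty}$ pairing — not the false $L^\nu\to L^\nu$ one — is exactly what the kernel size permits. Making this precise requires a careful Lorentz-space bookkeeping of the Calderón–Zygmund atoms (or dyadic layers of $E$) against the Riesz-potential kernel, using only $\eqref{Doubling}$, $\eqref{UE}$ and $\eqref{RD0}$, and no pointwise asymptotic expansion of $\Delta^{-1/2}(x,y)$ itself; this is the step where the hypotheses $(\textrm{RCE})$ and $\eqref{VC}$ must be fed in (through Gilles's $L^p$ theorem) rather than used directly.
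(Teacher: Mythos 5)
Your heuristic is the right one --- the critical piece of the kernel behaves, as a function of $y$, like an element of $L^{\nu',\infty}$ that pairs with $L^{\nu,1}$ --- but neither of the two mechanisms you propose actually delivers the estimate. The Calder\'on--Zygmund decomposition with mean-zero atoms and the splitting $T=T(I-e^{-r_i^2\Delta})+Te^{-r_i^2\Delta}$ is the standard machinery for the \emph{lower} endpoint (weak $(1,1)$); at the upper endpoint $p=\nu>2$ the cancellation $\int b_i=0$ buys nothing, and you leave the genuinely difficult term $Te^{-r_i^2\Delta}b_i$ entirely unestimated. Your alternative --- applying the $L^{p(k)}$ bound with $p(k)\uparrow\nu$ on dyadic annuli and summing via \eqref{RD0} --- is a Yano-type extrapolation, and it cannot be closed without quantitative control of the blow-up of $\|\nabla\Delta^{-1/2}\|_{p\to p}$ as $p\uparrow\nu$. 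That information is not contained in the statement of Gilles's theorem, and you explicitly restrict yourself to feeding the hypotheses in only ``through Gilles's $L^p$ theorem'': since the restricted weak type $(\nu,\nu)$ bound is strictly stronger than the family of $L^p$ bounds for $p<\nu$, this self-imposed restriction is exactly where the argument breaks.

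The missing ingredient is the structural decomposition from Gilles's proof, recalled in Lemma~\ref{Lemma_A}: $\nabla\Delta^{-1/2}=T_1+T_2+T_3$, where $T_1,T_2$ are bounded on $L^\nu$ (hence trivially of restricted weak type $(\nu,\nu)$), and the only problematic part $T_3$, supported where $r(y)\ge\kappa r(x)$, satisfies the \emph{pointwise} kernel bound $|T_3(x,y)|\lesssim \frac{r(y)}{r(x)}\frac{1}{V(o,r(y))}$. With this in hand the endpoint estimate is a short direct computation, with no Calder\'on--Zygmund decomposition, interpolation, or semigroup splitting: by the Hardy--Littlewood rearrangement inequality and H\"older in Lorentz spaces, $|T_3f(x)|\lesssim r(x)^{-1}\|f\|_{(\nu,1)}\|\mathcal{G}_x\|_{(\nu',\infty)}$ where $\mathcal{G}_x(y)=\frac{r(y)}{V(o,r(y))}\chi_{\{r(y)\ge\kappa r(x)\}}$; the reverse doubling condition \eqref{RD0} yields $\|\mathcal{G}_x\|_{(\nu',\infty)}\lesssim r(x)\,V(o,\kappa r(x))^{-1/\nu}$, and finally $V(o,\kappa r(\cdot))^{-1/\nu}\in L^{\nu,\infty}$ directly from the definition of the distribution function. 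If you wish to avoid quoting the kernel estimate for $T_3$, you would have to reprove it; there is no way around some pointwise (or at least Lorentz-dual) information on the far-field part of the Riesz kernel.
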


We note that the condition $\nu>2$ is not essential; see also \cite[Remark~3.1]{Gilles}, the assumptions of Theorem~\ref{R_Endpoint} already imply \eqref{R_p} for all $1<p\le 2$, independent of $\nu$. Consequently, a Lorentz-type endpoint result at $p=\nu$ for $1<\nu\le 2$  carries no additional significance; see also Remark~\ref{remark_weak11} below.

\medskip

In our next investigation, we study the reverse Riesz inequality. By \cite[Proposition~2.1]{CD}, it is well-known that  on any complete Riemannian manifolds, \eqref{R_p} implies $(\textrm{RR}_{p'})$. However, the converse is not always true. Therefore, the following two natural questions arise: how to prove such a reverse inequality, and why is \eqref{eq_RRp} less demanding (compare to prove \eqref{R_p})? For the first question, a natural way is to use the duality argument directly (\cite[Proposition~2.1]{CD}). For example, by \cite{CD2}, \eqref{Doubling} and \eqref{DUE} implies \eqref{R_p} for $1<p\le 2$ and hence by duality, \eqref{eq_RRp} holds for $p\ge 2$, 

To address the above reverse problem more thoroughly, we present the following three illustrative examples. First, by establishing a Calderón–Zygmund decomposition for Sobolev functions, Auscher-Coulhon \cite{AC} showed that the conjunction of \eqref{Doubling} and \eqref{P_q} (for some $1\le q<2$) implies \eqref{eq_RRp} for all $q<p<\infty$. Under the same assumptions, however, one can only derive \eqref{R_p} for $1<p<2+\epsilon$ (for some $\epsilon>0$). Second, in \cite{KVZ}, one considers the Dirichlet Laplacian outside some convex bounded obstacle in $\mathbb{R}^d$ $(d\ge 3)$. By establishing a Littlewood-Paley theory, their results suggest that \eqref{eq_RRp} is valid for all $1<p<\infty$, whereas \eqref{R_p} only holds for $1<p<d$; see \cite{JL} for an analogous result when $d=2$. Third, in a recent article \cite{H2}, the author examines a class of manifolds that satisfy neither \eqref{Doubling} nor \eqref{P_q} (for $1\le q\le 2$), and for which the Riesz transform is only $L^p\textit{-}$bounded on a finite interval. Nonetheless, by using a so-called harmonic annihilation method, the results there still confirm that \eqref{eq_RRp} holds for every $1<p<\infty$. Further details can be found in \cite{AC,KVZ,JL,HS2,H2}.

To describe our next result, one recalls that for $1\le p<\infty$, the $L^p$ Hardy's inequality holds on $M$ if
\begin{align}\tag{$\textrm{H}_p$}\label{eq_Hp}
    \left\| \frac{f}{r(\cdot)} \right\|_{p} \le C\|\nabla f\|_p,\quad \forall f\in C_c^\infty(M).
\end{align}
Another ingredient of the statement is the so-called weighted Sobolev inequality. We say $M$ satisfies weighted Sobolev inequality if
\begin{align}\tag{$\textrm{WS}_p^\mu$}\label{eq_WSp}
\left\| \frac{f}{\rho(\cdot)} \right\|_{(p^*,p)} \le C \|\nabla f\|_p,\quad \forall f\in C_c^\infty(M), \quad p^* = \frac{\mu p}{\mu-p}, 
\end{align}
where $1\le p<\mu$ and
\begin{align*}
    \rho(x) = \frac{r(x)}{V(o,r(x))^{1/\mu}}.
\end{align*}
Note that here we ask for a Lorentz quasi-norm $L^{p^*,p}$ on the left of \eqref{eq_WSp}, which is the sharpest form of Sobolev inequality in the sense of $\mathbb{R}^n$; see \cite{BCLS,O'neil}.

We present our next result in the following way.

\begin{theorem}\label{RR_Equivalence}
Let $M$ be a complete Riemannian manifold satisfying \eqref{QD}, \eqref{VC}, \eqref{RD} $(\nu>1)$, and $(RCE)$. Then the following statements are equivalent:
\begin{enumerate}[label=(\roman*)]
    \item \eqref{eq_Hp} holds for $p\in (1,\nu)$, 
    \item \eqref{eq_RRp} holds for $p\in (1,\nu)$,
    \item \eqref{eq_WSp} holds for $p\in (1,\nu)$,
\end{enumerate}
where $\mu\ge \nu$ is the doubling exponent in \eqref{Doubling}.
\end{theorem}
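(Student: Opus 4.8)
The plan is to close the cycle $(\romannumeral3)\Rightarrow(\romannumeral1)\Rightarrow(\romannumeral2)\Rightarrow(\romannumeral3)$. First record the standing geometry: $\eqref{QD}+\eqref{VC}$ give $\eqref{Doubling}$ and $\eqref{DUE}$ by \cite[Theorem~2.4]{Gilles}, hence $\eqref{UE}$ by \cite{grigor_heatkernel_Gaussian}, so one has the relative Faber--Krahn/local Sobolev inequalities they entail and the $L^p$-boundedness of $\nabla\Delta^{-1/2}$ for $1<p\le 2$ (\cite{CD2}); in particular $\eqref{eq_RRp}$ already holds for $p\ge2$ by duality, but the argument below treats $p\in(1,\nu)$ uniformly. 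The decisive new ingredient — and the step I expect to be the main obstacle — is an \emph{asymptotic formula for the Riesz potential}: under the present hypotheses one shows that, for every integer $J$,
\[
\Delta^{-1/2}=\mathcal H_J+E_J ,
\]
where $\mathcal H_J$ is a finite sum of rank-one-type "harmonic profile" operators $g\mapsto \chi(x)w_j(x)\langle g,a_j\rangle$, built from the coefficient functions $a_j$ of an expansion of the type \eqref{asymptotics}, each $a_j$ being harmonic (or harmonic off a fixed compact set), and $E_J$ is a remainder whose kernel decays, near spatial infinity in either variable, fast enough that $E_J$ has the full-strength mapping properties of a genuine first-order fractional integral on a doubling space, namely $r(\cdot)^{-1}E_J$ bounded on $L^p$, $\rho(\cdot)^{-1}E_J\colon L^p\to L^{p^*,p}$, and $\nabla E_J$ bounded on $L^q$, for all exponents in play once $J=J(p)$ is large. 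Establishing this for the whole class \eqref{QD}, \eqref{VC}, \eqref{RD}, $(\textrm{RCE})$ — it is known from \cite{CCH} only for $\mathbb R^n\#\mathbb R^n$ — requires a resolvent/heat-kernel analysis at spatial infinity in the spirit of \cite{GH1,GH2} together with the remainder estimates, and is where most of the work sits.

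Granting the formula, $(\romannumeral3)\Rightarrow(\romannumeral1)$ is soft: since $r(\cdot)^{-1}=\rho(\cdot)^{-1}V(o,r(\cdot))^{-1/\mu}$ and $\tfrac1p=\tfrac1{p^*}+\tfrac1\mu$, Hölder's inequality in Lorentz spaces gives $\|f/r\|_p\le C\|f/\rho\|_{(p^*,p)}\big\|V(o,r(\cdot))^{-1/\mu}\big\|_{(\mu,\infty)}$, and as $\{x:V(o,r(x))^{-1/\mu}>\lambda\}$ is a ball $B(o,R_\lambda)$ with $V(o,R_\lambda)\le\lambda^{-\mu}$, the last factor is $\le1$; thus \eqref{eq_WSp} implies \eqref{eq_Hp} on the same range.

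For $(\romannumeral2)\Rightarrow(\romannumeral3)$ I run the \emph{harmonic annihilation} principle on the "$\Delta^{1/2}f$ side". Applying the formula to $g=\Delta^{1/2}f$ and using $\Delta^{-1/2}\Delta^{1/2}f=f$ gives $f=\mathcal H_J(\Delta^{1/2}f)+E_J(\Delta^{1/2}f)$; every profile term involves $\langle\Delta^{1/2}f,a_j\rangle=\pi^{-1/2}\int_0^\infty\langle e^{-t\Delta}f,\Delta a_j\rangle\,t^{-1/2}\,dt$, which vanishes when $a_j$ is harmonic (the Gaussian decay of $e^{-t\Delta}f$ legitimates the integration by parts), and when $a_j$ is only harmonic off a compact set the term equals $\langle f,\Delta a_j\rangle$ times a bounded functional, hence is $\lesssim\|f/r\|_p$. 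Feeding $r(\cdot)^{-1}$ instead of $\rho(\cdot)^{-1}$ into the same identity gives $(\romannumeral2)\Rightarrow(\romannumeral1)$, so $\|f/r\|_p\lesssim\|\nabla f\|_p$ is available; therefore $\|f/\rho\|_{(p^*,p)}\le\|\rho(\cdot)^{-1}E_J(\Delta^{1/2}f)\|_{(p^*,p)}+(\text{Hardy error})\lesssim\|\Delta^{1/2}f\|_p\lesssim\|\nabla f\|_p$ by the remainder estimate and $(\romannumeral2)$.

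Finally $(\romannumeral1)\Rightarrow(\romannumeral2)$ runs the annihilation on the "$\nabla f$ side" by duality: for $f,h\in C_c^\infty(M)$, $\langle\Delta^{1/2}f,h\rangle=\langle\nabla f,\nabla\Delta^{-1/2}h\rangle=\langle\nabla f,\nabla\mathcal H_Jh\rangle+\langle\nabla f,\nabla E_Jh\rangle$; each profile contributes $\langle\nabla f,\nabla a_j\rangle=\langle f,\Delta a_j\rangle$ times a coefficient linear in $h$, the first factor vanishing for harmonic $a_j$ and being $\lesssim\|f/r\|_p\lesssim\|\nabla f\|_p$ otherwise (by \eqref{eq_Hp}$=(\romannumeral1)$), while the coefficient is $\lesssim\|h\|_{p'}$ — working with $h\in C_c^\infty$ first renders the finitely many coefficients honest finite numbers, so no "$0\times\infty$" ambiguity arises. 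The remaining term is $\le\|\nabla f\|_p\|\nabla E_Jh\|_{p'}\lesssim\|\nabla f\|_p\|h\|_{p'}$ by the gradient bound for $E_J$ (valid for any fixed $p'$ once $J$ is large), so taking the supremum over $\|h\|_{p'}\le1$ and extending by density yields \eqref{eq_RRp}. The only place the argument can fail is the asymptotic formula — specifically whether \eqref{asymptotics} persists to arbitrarily high order on this class rather than terminating or acquiring logarithmic corrections, and whether the profiles can be taken genuinely harmonic; all three equivalences are driven by that single lemma.
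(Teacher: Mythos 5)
Your cycle $(\romannumeral3)\Rightarrow(\romannumeral1)\Rightarrow(\romannumeral2)\Rightarrow(\romannumeral3)$ is the same as the paper's, and your $(\romannumeral3)\Rightarrow(\romannumeral1)$ step (H\"older in Lorentz spaces against $\|V(o,r(\cdot))^{-1/\mu}\|_{(\mu,\infty)}\le 1$) is exactly Lemma~\ref{lemma_S_H}. The problem is the ingredient you yourself flag at the end: every other implication in your argument is driven by the decomposition $\Delta^{-1/2}=\mathcal{H}_J+E_J$ into harmonic profiles plus a remainder, i.e.\ by an asymptotic expansion of type \eqref{asymptotics} valid to arbitrary order on the whole class \eqref{QD}, \eqref{VC}, \eqref{RD}, $(\textrm{RCE})$. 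That expansion is known only for $\mathbb{R}^n\#\mathbb{R}^n$ (\cite{CCH}); the paper uses it purely as motivation, and nothing in the hypotheses supplies the resolvent analysis at spatial infinity needed to produce it, let alone the strong mapping properties you assign to $E_J$ (e.g.\ $\nabla E_J$ bounded on $L^{q}$ beyond the Riesz range $q<\nu$, which is precisely the kind of improvement the expansion is supposed to buy and cannot be assumed). As written, the two substantive implications rest on an unproved lemma, so the proof is incomplete.

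The paper's route shows how to avoid this entirely. For $(\romannumeral1)\Rightarrow(\romannumeral2)$ (Theorem~\ref{thm_H_RR}) the annihilation is performed \emph{implicitly} at the level of the heat-kernel representation: one inserts a cutoff $\eta_{\kappa^{-1}r(y)}(x)$ into $\int_0^\infty\nabla p_t(x,y)\,\frac{dt}{\sqrt{\pi t}}$ and integrates by parts in $x$, so the gradient that would hit the harmonic leading coefficient is traded for $\Delta_x p_t$ (controlled by the time-derivative heat-kernel estimate) plus a commutator $\nabla\eta\cdot\nabla p_t$ supported where $r(x)\simeq r(y)$; both are bounded pointwise by $\chi_{r(x)\lesssim r(y)}\,r(y)^{-1}V(o,r(y))^{-1}$ (Lemma~\ref{lemma_akcakn}), and the resulting positive operator is handled by Lemma~\ref{lemma_njcndjcn} together with \eqref{eq_Hp}. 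Only the gradient and time-derivative heat-kernel bounds are needed, and these \emph{do} follow from the hypotheses. For $(\romannumeral2)\Rightarrow(\romannumeral3)$ no annihilation is needed at all: by density of $\Delta^{1/2}C_c^\infty(M)$ it suffices to show $\rho(\cdot)^{-1}\Delta^{-1/2}\colon L^p\to L^{p^*,p}$, and the kernel bound $\Delta^{-1/2}(x,y)\lesssim d(x,y)/V(x,d(x,y))$ coming from \eqref{UE}, a maximal-function splitting, and Lorentz interpolation do the job (Lemma~\ref{thm_RR_WS}). To repair your write-up, replace the appeal to \eqref{asymptotics} by this cutoff-and-integrate-by-parts device; otherwise you must actually prove the expansion, which is a substantial open step.
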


In this article, rather than adapting the approach from \cite{AC} (see also \cite{DR}) based on conditions \eqref{Doubling} and \eqref{P_q}, we focus on the intrinsic connection between \eqref{eq_RRp} and the Riesz transform itself. Specifically, we extend the harmonic annihilation method introduced in \cite{H2} to manifolds with quadratically decaying curvature.

To elaborate our method; see also \cite{H2}, let us again consider $M=\mathbb{R}^n \# \mathbb{R}^n$. Let $f,g\in C_c^\infty(\overline{M})$ such that $g$ is supported near the boundary $\partial \overline{M}$. We consider bilinear form $\langle \Delta^{1/2}f, g \rangle$. By using a suitable resolution to identity, we have by \eqref{asymptotics},
\begin{align}\label{asymptotic2}
    \left\langle \Delta^{1/2}f, g \right\rangle = \left\langle \nabla f, \nabla \Delta^{-1/2}g \right\rangle \sim \left\langle \nabla f, \int_{\partial \overline{M}} \sum_{j=n-1}^\infty \nabla a_j(x) |y|^{-j} g(y) dy  \right\rangle.
\end{align}
While by integration by parts, the leading coefficent $\nabla a_{n-1}$ vanishes since $a_{n-1}$ is harmonic, which implies
\begin{align*}
    \left\langle \Delta^{1/2}f, g \right\rangle \sim \left\langle f, \int_{\partial \overline{M}} \sum_{j=n}^\infty \Delta a_j(x) |y|^{-j} g(y) dy  \right\rangle.
\end{align*}
Let $Q(x)$ be some potential. We rewrite the above bilinear form as
\begin{align*}
    \left\langle \Delta^{1/2}f, g \right\rangle \sim \left\langle \frac{f(x)}{Q(x)}, \int_{\partial \overline{M}} \sum_{j=n}^\infty Q(x) \Delta a_j(x) |y|^{-j} g(y) dy  \right\rangle.    
\end{align*}
Finally, we reduce the problem to the estimates of the left and right entries of the above bilinear form separately. In fact, in \cite{H2}, the author computed the leading coefficient $a_{n-1}$ explicitly. However, in the general setting of \eqref{QD} manifolds, we employ this harmonic annihilation method implicitly, see also \cite[Section~5, 6]{H2}.

By restricting our attention to manifolds with quadratically decaying curvature, our method explaines why \eqref{eq_RRp} is comparatively easier to satisfy than \eqref{R_p}: in the above bilinear setting, the problematic leading coefficient $\nabla a_{n-1}$ disappears.

Next, under additional geometric assumptions on the manifold, we essentially recover the result of \cite[Corollary~1.4]{DR}. 

We recall that $M$ is said to have bounded geometry if and only if its Ricci curvature is bounded from below and its radius of injectivity is strictly positive. By modifying the proof of Theorem~\ref{RR_Equivalence}, we obtain the following result.

\begin{theorem}\label{thm_RR_QD}
Let $M$ be a complete Riemannian manifold with bounded geometry satisfying \eqref{QD}, \eqref{VC}, \eqref{RD} $(\nu>1)$ and $(\textrm{RCE})$. Then \eqref{eq_RRp} holds on $M$ for all $p\in (1,\nu) \cup [2, \infty)$.
\end{theorem}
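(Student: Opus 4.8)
The plan is to derive Theorem~\ref{thm_RR_QD} from Theorem~\ref{RR_Equivalence} by verifying that, under the additional hypothesis of bounded geometry, the weighted Sobolev inequality \eqref{eq_WSp} holds on $M$ for every $p\in(1,\nu)$, together with handling the range $p\ge 2$ by duality. For the range $p\in[2,\infty)$ there is nothing new to prove: the assumptions \eqref{QD}, \eqref{VC} give, via \cite[Theorem~2.4]{Gilles}, both \eqref{Doubling} and \eqref{DUE}, hence \eqref{UE}; then \cite{CD2} yields \eqref{R_p} for $1<p\le 2$, and \cite[Proposition~2.1]{CD} gives $(\textrm{RR}_{p'})$ for all $p\ge 2$ by duality. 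So the whole content is the interval $(1,\nu)$, and by Theorem~\ref{RR_Equivalence} it suffices to establish statement $(\romannumeral3)$, i.e. \eqref{eq_WSp} on the range $p\in(1,\nu)$.

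Next I would establish \eqref{eq_WSp}. The natural route is: (a) first prove a uniform \emph{local} (or "at scale $1$") Sobolev/Gagliardo--Nirenberg inequality, which is available precisely because bounded geometry (Ricci bounded below plus positive injectivity radius) forces uniform local volume lower bounds and a uniform local isoperimetric/Sobolev inequality on unit balls (this is classical, e.g. via Croke or via the local parabolic Harnack package); (b) upgrade this to the global scaled statement using \eqref{Doubling} and the reverse doubling \eqref{RD} with $\nu>1$, which together with \eqref{QD} control the volume growth and let one run the standard argument producing a Faber--Krahn / Nash inequality with the correct weight $V(o,r(\cdot))^{1/\mu}$. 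Concretely, one shows the Faber--Krahn inequality
\[
\lambda_1(\Omega)\ \ge\ \frac{c}{r_\Omega^2}\Bigl(\frac{V(x_\Omega,r_\Omega)}{\mu(\Omega)}\Bigr)^{2/\mu}
\]
for all bounded $\Omega$ contained in a ball $B(x_\Omega,r_\Omega)$, which is equivalent to a weighted Nash inequality and, after the usual O'Neil--type rearrangement argument (cf. \cite{BCLS,O'neil}), delivers the sharp Lorentz-norm weighted Sobolev inequality \eqref{eq_WSp} with the weight $\rho(x)=r(x)/V(o,r(x))^{1/\mu}$ on the stated range $1\le p<\mu$, in particular for $p\in(1,\nu)$ since $\nu\le\mu$.

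I expect the main obstacle to be step (b): passing from the purely local Sobolev inequality (which only "sees" unit scale) to the globally scaled weighted inequality with the \emph{anchored} volume $V(o,r(x))$ rather than the centred $V(x,r(x))$. This is exactly where \eqref{VC} is used — it converts anchored balls into centred balls of comparable radius — and where one must be careful that the doubling exponent $\mu$ appearing in the weight is consistent with the $\nu$ coming from reverse doubling; the inequality $\nu\le\mu$ and the relation $p^*=\mu p/(\mu-p)$ have to be tracked through the rearrangement estimate. A secondary technical point is that $M$ may have several ends, so the pointwise control of $r(x)$ and of volumes along different ends must be uniform; here \eqref{RD} for general (not just anchored) balls, or the combination of $(\textrm{RCE})$ with \eqref{VC}, is what guarantees uniformity. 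Once \eqref{eq_WSp} is in hand on $(1,\nu)$, Theorem~\ref{RR_Equivalence} closes the argument, and combining with the duality range $[2,\infty)$ gives \eqref{eq_RRp} on $(1,\nu)\cup[2,\infty)$ as claimed.
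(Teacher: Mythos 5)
Your treatment of the range $[2,\infty)$ by duality from \eqref{R_p} for $1<p\le 2$ is correct and coincides with the paper's reduction, and the overall plan of obtaining $(1,\nu)$ from Theorem~\ref{RR_Equivalence} by verifying one of the three equivalent statements is legitimate. The gap is in how you propose to verify \eqref{eq_WSp}. The relative Faber--Krahn inequality \eqref{FK} you invoke is \emph{equivalent} to \eqref{Doubling} plus \eqref{DUE} (Remark~\ref{remark_weak11}); it already follows from \eqref{QD} and \eqref{VC} alone and is insensitive to the connectivity of the ends. No Nash-type or O'Neil rearrangement argument turns it into a Sobolev inequality with the \emph{anchored} weight $V(o,r(\cdot))^{1/\mu}$: Faber--Krahn yields the on-diagonal heat kernel bound, and only under uniform volume growth $V(x,r)\simeq r^{\mu}$ does it give an (unweighted) Sobolev inequality. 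When the growth is non-uniform, producing the anchored weight is exactly the difficulty, and every known proof of \eqref{eq_WSp} (\cite{Minerbe}, \cite{Tewodrose}, \cite{DR}) runs a discretization/patching argument over a good covering of the ends using Poincar\'e inequalities on remote and anchored balls together with $(\textrm{RCE})$ --- none of which your step (b) actually uses. If your route worked, \eqref{eq_WSp}, hence by Lemma~\ref{lemma_S_H} the Hardy inequality \eqref{eq_Hp}, would follow from volume and heat-kernel conditions alone, which is not the case; so step (b) is not a ``standard upgrade'' but the entire content, and as written it fails.

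There are two ways to repair your strategy. Either establish \eqref{eq_WSp} properly by feeding the Poincar\'e inequality \eqref{$P_q'$} (available from \eqref{QD} via \cite[Theorem~5.6.5]{Saloff_aspect}) into the machinery of \cite{Minerbe,Tewodrose,DR}; or, more simply, upgrade Lemma~\ref{lemma_513}$(\romannumeral2)$ to the genuine Hardy inequality \eqref{eq_Hp}: the weights $1/(1+r)$ and $1/r$ differ only on $B(o,1)$, where bounded geometry gives the local Euclidean-type Hardy inequality for $p<n$ ($n$ the dimension), and $p<\nu\le n$ automatically since \eqref{RD} at small scales forces $\nu\le n$; then apply Theorem~\ref{thm_H_RR}. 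For comparison, the paper avoids proving \eqref{eq_Hp} altogether: it splits $f=f_0+f_1$ with $f_0$ supported near $o$, controls $\Delta^{1/2}f_0$ through a low/high energy spectral decomposition (the high-energy part bounded by $\|\nabla f\|_p$ using bounded geometry and finite propagation speed), and reruns the bilinear argument of Section~\ref{section_4} on $f_1$, where the shifted Hardy inequality of Lemma~\ref{lemma_513} suffices because $1+r(x)\simeq r(x)$ off $B(o,1)$.
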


Note that for $n\ge 2$, the connected sum $\mathbb{R}^n \# \mathbb{R}^n$ has bounded geometry and satisfies all the assumptions in Theorem~\ref{thm_RR_QD}. We therefore regain the main results of \cite{H2} (a special case) in a different way. We also mention that under a similar setting, our result aligns with \cite[Corollary~1.7]{DR}. That is, \eqref{eq_RRp} holds for all $p\in (1,\infty)$ when $\nu>2$. The proof from \cite{DR} relied on a Calderón–Zygmund decomposition for Sobolev functions introduced in \cite{AC}. Here, however, we offer an alternative proof that leverages the intrinsic connection between the Riesz and reverse Riesz transforms.
\bigskip

Let $n=m=1$ and $\beta \ge 0$. From \cite{Grushin}, the original Grushin operator is given by $-\partial_x^2 - x^{2\beta} \partial_y^2$. For $n\ge 1$, $m\ge 1$ and $\beta \ge 0$. It is natural to define the generalized Grushin operator on $\mathbb{R}^{n+m}$ in the way:
\begin{align}\label{eq_grushin_operator}
    L = - \sum_{i=1}^n \partial_{x_i}^2 - |x|^{2\beta} \sum_{i=1}^m \partial_{y_i}^2 = \Delta_x + |x|^{2\beta} \Delta_y,\quad (x,y)\in \mathbb{R}^{n+m}.
\end{align}
Precisely, let $\nabla_L$ denote the gradient operator defined by $\nabla_L = \left(\nabla_x, |x|^\beta \nabla_y \right)$. We then define $L$ as the unique positive self-adjoint operator associated with the Friedrichs extension of the following Dirichlet form:
\begin{align*}
    Q(f) = \int_{\mathbb{R}^{n+m}} \nabla_Lf(\xi) \cdot \nabla_L f(\xi) d\xi
\end{align*}
for any $f\in C_c^\infty(\mathbb{R}^{n+m})$.

Denote by $\mathcal{R}$, the Riesz transform associated to $L$, i.e. $\mathcal{R}=\nabla_L L^{-1/2}$. From \cite[Theorem~8.1]{DS1} (see also \cite{CD2}), $\mathcal{R}$ is bounded on $L^p$ for all $1<p\le 2$. Moreover, it is of weak type $(1,1)$. By \cite{DS3}, if $\beta \in \mathbb{N}$, then \eqref{R_p} holds for all $p\in (1,\infty)$ by using techniques from nilpotent Lie group theory. It is also worth mentioning that for the special case where $m=\beta=1$, a similar result was obtained in \cite{JST} by using a quite different approach. 

In this note, we focus on the Riesz and reverse Riesz inequalities under the general setting, where $n\ge 2$, $m\ge 1$ and $\beta \ge 0$. For the reverse inequality, by duality, the result from \cite[Theorem~8.1]{DS1} indicates \eqref{eq_RRp} holds for all $p\ge 2$. Next, from \cite{DS2}, the Poincaré inequality $(\textrm{P}_2)$ holds on the Grushin space. Then, followed by \cite{SZ} and \cite{AC}, \eqref{eq_RRp} is expected to be hold for $p\in (2-\epsilon,\infty)$ for some $\epsilon >0$.

Consider metric
\begin{align}\label{eq_metric}
    g_\xi = dx^2 + |x|^{-2\beta} dy^2, \quad \xi = (x,y)\in \mathbb{R}^n \setminus \{0\} \times \mathbb{R}^m.
\end{align}
By considering the space as a doubly warped product space, one can easily verify that the space $(\mathbb{R}^{n}\setminus \{0\} \times \mathbb{R}^m, g_\xi)$ has Ricci lower bound: $\textrm{Ric}_\xi \ge -c(n,m,\beta)/|x|^2$. Note that for $\xi = (x,y)\in \mathbb{R}^n \setminus \{0\} \times \mathbb{R}^m$, $L$ coincides with the weighted Laplacian $\Delta_g + V$, where $\Delta_g$ is the Laplace-Beltrami operator according to the metric \eqref{eq_metric}, and $V$ is a first order drift term. Instead of estimating Ricci curvature, one can verify that the curvature dimension inequality $\textrm{CD}(-c_1/|x|^2, c_2)$ holds on $\mathbb{R}^n \setminus \{0\} \times \mathbb{R}^m$.

By adapting the methods from \cite{HL,Gilles,ACDH}, we analyze the good part, the diagonal part and the bad part of the Riesz kernel respectively. Combining this with the harmonic annihilation method, we verify the following result.

\begin{theorem}\label{thm_RR_Grushin}
Let $n\ge 2$, $m\ge 1$ and $\beta > 0$. Let $L$ be the Grushin operator given by \eqref{eq_grushin_operator}. Then the reverse Riesz inequality holds in the sense:
\begin{align*}
    \|L^{1/2} f\|_p \le C \|\nabla_L f\|_p, \quad \forall f\in C_c^\infty(\mathbb{R}^{n+m}) 
\end{align*}
for all $1<p<\infty$.

Moreover, the Riesz transform is bounded on $L^p$ in the sense:
\begin{align*}
    \|\nabla_L L^{-1/2}f\|_p \le C \|f\|_p,\quad \forall f\in C_c^\infty(\mathbb{R}^{n}\setminus \{0\} \times \mathbb{R}^m)
\end{align*}
for $1<p\le 2$ if $n=2$ and $1<p<n$ if $n>2$.
\end{theorem}

\section{Preliminaries}\label{section_2}
Throughout the paper, we use notations $A\lesssim B$, $A\gtrsim B$ and $A\simeq B$ to denote $A\le cB$, $A\ge cB$ and $cB \le A \le c^{-1} B$ respectively for some constant $c>0$. We also write $dx = d\mu(x) = d\textrm{vol}(x)$ for simplicity.
\medskip

One of \cite{Gilles}'s main results, which is the one we continue to study here, can be described in the following way. 

\begin{theorem}\cite[Theorem A]{Gilles}\label{A}
Assume $(M,g)$ is a complete Riemannian manifold satisfying \eqref{QD}, \eqref{VC} and $(RCE)$. If furthermore, $M$ satisfies \eqref{RD0} for some $\nu>2$, then the Riesz transform, $\nabla \Delta^{-1/2}$, is bounded on $L^p$ for all $p\in (1,\nu)$. 
\end{theorem}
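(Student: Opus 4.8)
The plan is to establish \eqref{R_p} separately in the two regimes $1<p\le 2$ and $2<p<\nu$, which require entirely different tools. For $1<p\le 2$ no hypothesis beyond \eqref{QD} and \eqref{VC} is needed: by \cite[Theorem~2.4]{Gilles} these two already yield \eqref{Doubling} and \eqref{DUE}, hence the Gaussian bound \eqref{UE} via \cite{grigor_heatkernel_Gaussian}, and the Coulhon--Duong theorem \cite{CD2} then gives that $\nabla\Delta^{-1/2}$ is of weak type $(1,1)$ and bounded on $L^p$ for $1<p\le 2$ (the case $p=2$ being the identity $\|\nabla f\|_2=\|\Delta^{1/2}f\|_2$). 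The substance is thus the range $2<p<\nu$, where the duality argument is useless and the $(\textrm{P}_2)$-based machinery of \cite{ACDH} does not apply, since manifolds with several ends generically violate $(\textrm{P}_2)$.

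For $2<p<\nu$ I would split the subordination identity $\Delta^{-1/2}=c\int_0^\infty(\Delta+k^2)^{-1}\,dk$ at $k\simeq R_0^{-1}$, where $R_0$ is chosen so that $M\setminus B(o,R_0)$ is the disjoint union of the finitely many unbounded ends $E_1,\dots,E_k$; since $\nu>2$ in \eqref{RD0} forces $V(o,R)\gtrsim R^\nu$, the manifold is non-parabolic, so the low-energy ($k\to 0$) resolvent has a well-defined limit built from the positive Green function. The high-energy part is local: \eqref{QD} controls the curvature on bounded scales, so for small $t$ one has $|\nabla_x e^{-t\Delta}(x,y)|\lesssim (t^{1/2}V(x,\sqrt t))^{-1}e^{-d(x,y)^2/ct}$, which together with \eqref{Doubling} makes the high-energy part a Calder\'on--Zygmund operator on the space of homogeneous type $(M,d,\mu)$, bounded on $L^p$ for every $1<p<\infty$.

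The heart of the matter is the low-energy part $T_{\mathrm{glob}}$. Fix a partition of unity $\{\chi_i\}_{i=0}^k$ subordinate to the core $B(o,2R_0)$ and the ends $E_1,\dots,E_k$ and decompose $\nabla T_{\mathrm{glob}}=\sum_{i,j}\chi_j\,\nabla T_{\mathrm{glob}}\,\chi_i$. The core terms and the diagonal terms $i=j$ are handled by comparing each end $E_i$ with a model manifold carrying a single end of the same volume growth, on which $\nabla\Delta^{-1/2}$ is bounded on $L^p$ for at least all $1<p<\nu$ (in many cases all $1<p<\infty$, e.g. by \cite{B} if the model has nonnegative Ricci curvature); here \eqref{VC} and $(\textrm{RCE})$ legitimise the comparison and finite propagation speed / Davies--Gaffney estimates render the error kernels Gaussian-small. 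The genuinely new contribution is the cross terms $i\ne j$, transporting mass from one end through the bottleneck into another. Here one uses an asymptotic expansion of the Riesz potential in the spirit of \eqref{asymptotics}--\eqref{asymptotic2}, $\Delta^{-1/2}(x,y)\sim\sum_{m\ge 0}a_m(x)\,\psi_m(y)$ as $y\to\infty$ in $E_i$, whose leading coefficient $a_0$ is harmonic --- so $\nabla a_0$ does not vanish, by the maximum principle --- and whose leading profile satisfies $\psi_0(y)\simeq r(y)^{1-\nu}$. The associated rank-one-type operator $f\mapsto\nabla a_0(\cdot)\,\langle\psi_0,\chi_i f\rangle$ is bounded $L^p(E_i)\to L^p(E_j)$ exactly when $\psi_0\in L^{p'}(E_i)$ and $\nabla a_0\in L^p(E_j)$, i.e. when $\nu'<p<\nu$, the binding constraint being $p<\nu$; the higher terms $m\ge 1$ have faster-decaying profiles and survive on a larger range, and the remainder of the expansion is a lower-order error. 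Combined with the first regime, this yields \eqref{R_p} for every $1<p<\nu$.

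The main obstacle is precisely this low-energy analysis: producing the asymptotic expansion of $\Delta^{-1/2}(x,y)$ for $x,y$ in distinct ends, with the sharp leading profile $\psi_0\simeq r(\cdot)^{1-\nu}$ and the harmonicity of $a_0$. This is where \eqref{RD0} and $(\textrm{RCE})$ enter in an essential, non-soft way: the Gaussian upper bound \eqref{UE} alone does not pin down the decay exponent, and $(\textrm{RCE})$ is what rules out a too-thin bottleneck and validates the single-end model comparison. A related difficulty is carrying out that comparison in the absence of a global Poincar\'e inequality, which forces one to work with the intrinsic heat kernel and to control every error term via Davies--Gaffney / finite-speed-of-propagation estimates rather than via elliptic regularity.
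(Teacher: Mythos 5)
Your first regime ($1<p\le 2$) is fine and is exactly how the paper handles it (Remark~\ref{remark_weak11}): \eqref{QD}$+$\eqref{VC} give \eqref{Doubling} and \eqref{DUE} by \cite[Theorem~2.4]{Gilles}, and then \cite{CD2} applies. The problem is the heart of your argument for $2<p<\nu$. You base the cross-end (low-energy) analysis on an asymptotic expansion $\Delta^{-1/2}(x,y)\sim\sum_m a_m(x)\psi_m(y)$ with a sharp leading profile $\psi_0(y)\simeq r(y)^{1-\nu}$ and harmonic $a_0$. Under the hypotheses of the theorem no such expansion is available, and in general it is false: \eqref{RD0} is only a one-sided lower bound on anchored volume growth (the doubling exponent $\mu$ in \eqref{Doubling} can be strictly larger than $\nu$, and $V(o,R)$ need not behave like any fixed power of $R$), the ends are not assumed asymptotically conic or comparable to any homogeneous model, and so the decay of the Riesz kernel cannot be pinned to the power $r^{1-\nu}$. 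The expansion \eqref{asymptotics} is a theorem of \cite{CCH} for $\mathbb{R}^n\#\mathbb{R}^n$, obtained from an explicit parametrix on a compactification; nothing of the sort follows from \eqref{QD}, \eqref{VC}, $(\textrm{RCE})$, \eqref{RD0}. Since you yourself identify this step as the main obstacle and offer no substitute, the proposal has a genuine gap precisely where the theorem's content lies. The same criticism applies to the diagonal terms: the ``comparison with a single-end model manifold'' on which the Riesz transform is already bounded up to $\nu$ is not something these hypotheses legitimise, and would anyway beg the question.

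What the cited proof actually does (and what this paper records in Lemma~\ref{Lemma_A} and re-uses in Section~\ref{section_3}) is much softer and avoids any asymptotics: one splits the kernel of $\nabla\Delta^{-1/2}$ into the three regimes $d(x,y)\ge\kappa^{-1}r(x),\,r(y)\le\kappa r(x)$; $d(x,y)\le\kappa^{-1}r(x)$; and $r(y)\ge\kappa r(x)$. The first two pieces are bounded on $L^p$ (for all $1<p<\infty$, resp.\ $2<p<\infty$) using the Gaussian bound \eqref{UE} together with the scale-invariant gradient estimate $|\nabla p_t(x,y)|\lesssim\bigl(t^{-1/2}+r(x)^{-1}\bigr)V(x,\sqrt t)^{-1}e^{-d(x,y)^2/ct}$, which is where \eqref{QD} enters (on remote balls the rescaled geometry has uniformly small curvature); no injectivity-radius or model comparison is needed. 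For the third regime one proves only the pointwise upper bound $|T_3(x,y)|\lesssim \frac{r(y)}{r(x)}\,\frac{1}{V(o,r(y))}$, and then a weighted Schur/Hardy-type argument using \eqref{RD0} (exactly the computation done for the operator $\mathcal{T}$ in the proof of Theorem~\ref{R_Endpoint}) gives $L^p$-boundedness for all $p<\nu$. In other words, the upper bound involving the true volume function $V(o,r(y))$, not a conjectural power law $r^{1-\nu}$, is the correct and attainable substitute for your $\psi_0$; your rank-one heuristic explains sharpness on $\mathbb{R}^n\#\mathbb{R}^n$ but cannot serve as the proof here. If you want to salvage your outline, replace the expansion step by a direct kernel estimate in the regime $r(y)\ge\kappa r(x)$ derived from \eqref{UE} and the gradient bound above, and run the weighted $L^p$ estimate with \eqref{RD0}; the high-energy/local part of your argument should likewise be rephrased via these scale-invariant remote-ball estimates rather than an unqualified Calder\'on--Zygmund claim.
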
    

\begin{remark}\label{remark_key}
Here, as also remarked in \cite[Remark~3.1]{Gilles}, we note that the aforementioned result also holds for $1 < \nu \leq 2$. However, since the assumptions already imply \eqref{R_p} for all $1<p\le 2$ (see Remark~\ref{remark_weak11} later), the case where $1<\nu\le 2$ is not that interesting. In particular, we will use this observation in our argument later.
\end{remark}

We say $M$ satisfies the relative Faber-Krahn inequality if there exists $\alpha>0$ such that for all $x\in M$, $R>0$ and any open subset $\Omega \subset B(x,R)$, we have
\begin{equation}\tag{FK}\label{FK}
\lambda_1(\Omega) \gtrsim  R^{-2} \left(\frac{\mu(\Omega)}{V(x,R)}\right)^{-2/\alpha},   
\end{equation}
where $\lambda_1(\Omega)$ stands for the smallest eigenvalue of the Laplacian on $\Omega$ for the Dirichlet boundary condition.

\begin{remark}\label{remark_weak11}
Although not explicitly stated in Theorem~\ref{A}, $\nabla \Delta^{-1/2}$ is also of weak type $(1,1)$. Indeed, by combining results from \cite{Gilles}, \cite{CD2}, and \cite{Gri_heatkernel}, one obtains the following chain of implications: the assumptions of Theorem~\ref{A} imply \eqref{FK}, and, by \cite{Gri_heatkernel}, \eqref{FK} is equivalent to the conjunction of \eqref{Doubling} and \eqref{DUE}. Finally, as established in \cite{CD2}, \eqref{Doubling} and \eqref{DUE} ensure that $\nabla \Delta^{-1/2}$ is bounded on $L^p$ for $p\in (1,2]$ and of weak type $(1,1)$.  
\end{remark}

The main idea of \cite{Gilles} is to decompose the kernel of $\nabla \Delta^{-1/2}$ into three parts with respect to the following regimes: for some $\kappa \ge 4$,

$(1)$ $\left\{ (x,y)\in M\times M \setminus \{x=y\}; d(x,y)\ge \kappa^{-1} r(x), r(y) \le \kappa r(x) \right\}$.

$(2)$ $\left\{ (x,y)\in M\times M \setminus \{x=y\}; d(x,y) \le \kappa^{-1} r(x)\right\}$.

$(3)$ $\left\{ (x,y)\in M\times M \setminus \{x=y\}; r(y)\ge \kappa r(x)\right\}$.

For simplicity, we write $\nabla \Delta^{-1/2} = T_1+T_2+T_3$, where $T_j(x,y)$ is the kernel of Riesz transform restricted to the regime $(j)$, $(j=1,2,3)$. The following lemma is a combination of \cite[Proposition 4.1, 4.2; Lemma 3.2]{Gilles} (also see Remark~\ref{remark_key}).

\begin{lemma}\cite{Gilles}\label{Lemma_A}
Let $M$ be a complete Riemannian manifold satisfying \eqref{QD}, \eqref{VC}, $(RCE)$ and \eqref{RD0} for some $\nu>1$. We have the following results:
\begin{enumerate}[label=(\roman*)]
    \item $T_1$ is bounded on $L^p$ for all $p\in (1,\infty)$.
    \item $T_2$ is bounded on $L^p$ for all $p\in (2,\infty)$.
    \item $T_3$ has kernel estimate: for any $x,y \in M$,
\begin{equation*}
    |T_3(x,y)| \lesssim \frac{r(y)}{r(x)} \frac{1}{V(o,r(y))}.
\end{equation*}
\end{enumerate}

\end{lemma}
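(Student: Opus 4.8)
\textbf{Proof proposal for Lemma~\ref{Lemma_A}.}

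The plan is to follow \cite{Gilles} and treat the three regimes separately, using the decomposition $\nabla\Delta^{-1/2}=T_1+T_2+T_3$ together with the integral representation $\Delta^{-1/2}=c\int_0^\infty e^{-t\Delta}\,\frac{dt}{\sqrt t}$, so that $\nabla\Delta^{-1/2}$ has kernel $c\int_0^\infty \nabla_x p_t(x,y)\,\frac{dt}{\sqrt t}$. Before anything else, I would invoke Remark~\ref{remark_weak11} to record that the standing hypotheses \eqref{QD}, \eqref{VC}, \eqref{RCE}, \eqref{RD0} imply \eqref{FK}, hence (by \cite{Gri_heatkernel}) the conjunction of \eqref{Doubling} and \eqref{DUE}, hence (by \cite{grigor_heatkernel_Gaussian}) the full Gaussian upper bound \eqref{UE}; this is the background regularity I will use throughout. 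Note that the condition $\nu>1$ — rather than $\nu>2$ — is enough here precisely because we are not trying to close up the $L^p$ range of the full Riesz transform, only to get the three stated partial conclusions; where \cite{Gilles} states things for $\nu>2$ I will appeal to Remark~\ref{remark_key} to drop that restriction.

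For part $(\romannumeral1)$ I would show $T_1$ is a Calderón--Zygmund operator on the space of homogeneous type $(M,d,\mu)$. On the regime $d(x,y)\ge\kappa^{-1}r(x)$, $r(y)\le\kappa r(x)$ one has $r(x)\simeq r(y)\simeq d(x,y)$ up to the factor $\kappa$, so the manifold ``looks like'' a fixed annulus at scale $d(x,y)$; feeding the Gaussian bound \eqref{UE} and the Li--Yau-type gradient bound $|\nabla_x p_t(x,y)|\lesssim t^{-1/2}V(x,\sqrt t)^{-1}e^{-d(x,y)^2/ct}$ (valid under \eqref{QD} on the relevant range, since there the curvature lower bound is effectively a fixed constant times $r(x)^{-2}$) into the time integral gives the standard kernel bound $|T_1(x,y)|\lesssim V(x,d(x,y))^{-1}$ and the matching Hölder/gradient regularity in $x$ and $y$. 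Then $L^2$-boundedness plus CZ theory on spaces of homogeneous type gives $L^p$ for $1<p<\infty$; the $L^2$ bound itself follows since $T_1$ is a piece of the bounded operator $\nabla\Delta^{-1/2}$ (bounded on $L^2$ by the spectral theorem) minus the pieces $T_2,T_3$, or directly by a Schur-type estimate using the kernel bound and \eqref{Doubling}. This is essentially \cite[Proposition~4.1]{Gilles}, so I would cite it for the routine verifications and only sketch why the curvature hypothesis is what makes the heat kernel estimates available on this regime.

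For part $(\romannumeral2)$, the regime $d(x,y)\le\kappa^{-1}r(x)$ is the ``local'' region, where $r(\cdot)$ is comparable to $r(x)$ on a whole ball, so after rescaling the metric by $r(x)^{-2}$ the curvature lower bound becomes a uniform constant and one is in the setting of a manifold with Ricci bounded below on the relevant scales; there the local Riesz transform is bounded on $L^p$ for $2\le p<\infty$ (indeed this is where the restriction to $p>2$ enters: without a Poincaré-type input one only controls the local part above the duality exponent). I would again defer to \cite[Proposition~4.2]{Gilles} for the details and stress that the $p\ge 2$ restriction is genuine and is exactly why Theorem~\ref{thm_RR_QD} needs bounded geometry to upgrade past it. For part $(\romannumeral3)$, on $r(y)\ge\kappa r(x)$ one has $d(x,y)\simeq r(y)$, and the point is to extract the decay in $r(y)$: using \eqref{UE}, $\int_0^\infty |\nabla_x p_t(x,y)|\frac{dt}{\sqrt t}$ is dominated, after splitting at $t\simeq r(y)^2$ and using $V(x,\sqrt t)\gtrsim V(o,r(y))(\sqrt t/r(y))^{\mu}$ type volume estimates together with the anchored reverse doubling \eqref{RD0}, by $\frac{r(y)}{r(x)}\cdot\frac{1}{V(o,r(y))}$; the factor $r(y)/r(x)$ comes from the gradient falling on the ``small'' variable and is exactly \cite[Lemma~3.2]{Gilles}. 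I would quote that lemma for the pointwise bound. The main obstacle — and the only place real work is hidden — is part $(\romannumeral1)$: verifying the kernel regularity (the Hölder estimates in both variables) for $T_1$ on the annular regime, because the Gaussian and gradient heat kernel bounds one needs under \eqref{QD} alone are not completely off-the-shelf on that scale and must be obtained by the rescaling argument above; everything else is either the spectral theorem, CZ machinery on spaces of homogeneous type, or a direct citation to \cite{Gilles}.
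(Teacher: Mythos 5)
Your proposal takes essentially the same route as the paper: the lemma is stated there as a direct quotation of \cite{Gilles} (Propositions~4.1, 4.2 and Lemma~3.2 for $T_1$, $T_2$, $T_3$ respectively), with Remark~\ref{remark_key} (i.e.\ \cite[Remark~3.1]{Gilles}) invoked to relax $\nu>2$ to $\nu>1$, which is exactly the structure of your argument, and your additional sketches of the underlying heat-kernel estimates are consistent with how those results are obtained. One harmless slip in your heuristic for $T_1$: on regime $(1)$ one has $d(x,y)\simeq r(x)\gtrsim r(y)$ but \emph{not} $r(y)\simeq r(x)$ in general (take $y$ near $o$); since your actual justification is the citation to \cite[Proposition~4.1]{Gilles}, this does not affect the proof.
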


\medskip

Next, for later use (the proof of Theorem~\ref{thm_RR_QD}), one considers $M$ satisfying \eqref{QD} and bounded geometry condition. We keep the notations from \cite{DR} for convenience.

\begin{definition}
Let $o\in M$ and $r_0>0$. We say that the ball $B=B(x,r)$
\begin{enumerate}[label=(\roman*)]
    \item is remote if $r\le r(x)/2$,
    \item is anchored if $x=o$,
    \item is admissible if $B$ is remote or anchored with radius $r\le r_0$.
\end{enumerate}

\end{definition}

\begin{definition}
$M$ is said to satisfy $L^q\textit{-}$Poincaré inequality on the end if for every admissible ball $B$,
\begin{equation}\tag{$\textrm{PE}_q$}\label{$P_q'$}
\|f-f_B\|_{L^q(B)} \lesssim r \|\nabla f\|_{L^q(B)}, \quad \forall f\in C^\infty(B),
\end{equation}
where $r$ is the radius of $B$.
\end{definition}

The following lemma is a direct consequence of \cite[Theorem~2.4]{Gilles} and \cite[Theorem~2.3]{DR}.

\begin{lemma}\label{lemma_513}
Let $M$ be a complete Riemannian manifold with bounded geometry which satisfies \eqref{QD}, \eqref{RD} $(\nu>1)$, \eqref{VC} and $(\textrm{RCE})$. Then one deduces
\begin{enumerate}[label=(\roman*)]
    \item \eqref{Doubling} and \eqref{DUE} hold on $M$.
    \item The following version $L^p\textit{-}$Hardy inequality holds 
\begin{equation*}
    \int_M \left( \frac{|f(x)|}{1+r(x)}  \right)^p dx \lesssim \int_M |\nabla f|^p dx, \quad \forall f\in C_c^\infty(M)
\end{equation*}
for all $1\le p<\nu$.
\end{enumerate}
\end{lemma}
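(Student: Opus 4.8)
The plan is to derive each of the two assertions directly from the two cited theorems, the only real task being to check that the standing hypotheses supply all of their inputs. For $(\romannumeral1)$, recall --- as already observed just after the statement of \eqref{VC} in Section~\ref{section_1} --- that \cite[Theorem~2.4]{Gilles} asserts precisely that \eqref{VC} together with \eqref{QD} implies \eqref{Doubling} and \eqref{DUE}. Since both \eqref{VC} and \eqref{QD} are among our hypotheses, $(\romannumeral1)$ is immediate. For later use we also record that, once \eqref{Doubling} and \eqref{DUE} are in force, they self-improve to the Gaussian upper bound \eqref{UE} by \cite{grigor_heatkernel_Gaussian} and are equivalent to the relative Faber--Krahn inequality \eqref{FK} by \cite{Gri_heatkernel} (see also Remark~\ref{remark_weak11}).

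For $(\romannumeral2)$ the strategy is to invoke \cite[Theorem~2.3]{DR}, which establishes precisely the weighted $L^p$-Hardy inequality with weight $(1+r(\cdot))^{-1}$, for all $1\le p<\nu$, on a manifold of bounded geometry satisfying \eqref{Doubling}, a heat kernel upper bound, reverse doubling with exponent $\nu>1$ (for anchored balls, hence a fortiori under \eqref{RD}), and the connectedness condition $(\textrm{RCE})$. Every one of these ingredients is at our disposal: bounded geometry, \eqref{QD}, \eqref{RD} with $\nu>1$, \eqref{VC} and $(\textrm{RCE})$ are hypotheses of the present lemma, while \eqref{Doubling}, \eqref{DUE} (and then \eqref{UE}) are furnished by part $(\romannumeral1)$. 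Hence $(\romannumeral2)$ follows by a direct application.

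The one point meriting comment is the presence of the weight $1+r(\cdot)$ in place of $r(\cdot)$: near $o$ the function $r(x)^{-1}$ is too singular to be dominated by the gradient, so one localises. Fixing a smooth partition of unity $1=\varphi_0+\varphi_\infty$ with $\varphi_0$ supported in $B(o,2)$ and $\varphi_\infty$ supported in $M\setminus B(o,1)$ and writing $f=\varphi_0 f+\varphi_\infty f$, on $\varphi_\infty f$ one has $(1+r(\cdot))^{-1}\simeq r(\cdot)^{-1}$ and applies the Hardy inequality on the end provided by \cite{DR}; on $\varphi_0 f$, which is supported in $B(o,2)$, the weight is $\simeq 1$ and the required bound $\|\varphi_0 f\|_p\lesssim \|\nabla(\varphi_0 f)\|_p$ reduces to a Sobolev- (equivalently, Faber--Krahn-) type inequality on the bounded set $B(o,2)$, valid by bounded geometry or directly from \eqref{FK}. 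The commutator terms $|\nabla\varphi_\bullet|\,|f|$ are supported in the annulus $\{1\le r(\cdot)\le 2\}$, on which $(1+r(\cdot))^{-1}\simeq 1$, and are therefore absorbed into the right-hand side; re-summing yields $(\romannumeral2)$ for all $1\le p<\nu$. I expect no genuine analytic difficulty here: the work lies entirely in matching the hypotheses of \cite[Theorem~2.3]{DR} --- in particular, confirming that its reverse-doubling exponent governs exactly the admissible range $p<\nu$ --- and, should that theorem be stated only with a clean $r(\cdot)^{-1}$ weight on the end, in carrying out the elementary localisation just sketched.
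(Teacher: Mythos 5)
Your part $(\romannumeral1)$ coincides with the paper's proof: both simply invoke \cite[Theorem~2.4]{Gilles} (equivalently Remark~\ref{remark_weak11}) to get \eqref{Doubling} and \eqref{DUE} from \eqref{QD} and \eqref{VC}, so there is nothing to add there. For $(\romannumeral2)$ you also take the same route as the paper --- apply \cite[Theorem~2.3]{DR} --- but you have mis-stated its hypotheses, and the mismatch is exactly where the real work lies. That theorem deduces the Hardy inequality with weight $(1+r(\cdot))^{-1}$ for $1\le p<\nu$ from \eqref{Doubling}, \eqref{RD} $(\nu>1)$, $(\textrm{RCE})$ \emph{and} the $L^q$-Poincar\'e inequality on admissible balls \eqref{$P_q'$}; a heat kernel upper bound is not the relevant input, and \eqref{Doubling}, \eqref{DUE}, \eqref{RD} and $(\textrm{RCE})$ alone do not give Hardy in the full range $p<\nu$ --- indeed, in this very paper (Theorem~\ref{RR_Equivalence}) Hardy is treated as a nontrivial condition, equivalent to the reverse Riesz inequality, under precisely such assumptions. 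So your claim that ``every one of these ingredients is at our disposal'' has a gap: you never verify \eqref{$P_q'$}. The paper closes this gap by citing \cite[Theorem~5.6.5]{Saloff_aspect}, which yields the Poincar\'e inequality on admissible (remote and small anchored) balls from the scaled Ricci lower bound \eqref{QD} together with bounded geometry, and only then applies \cite[Theorem~2.3]{DR} (see also \cite{Minerbe}). Your argument as written uses \eqref{QD} only through doubling/\eqref{DUE}, which is not enough.

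A secondary remark: your closing paragraph about the weight $1+r(\cdot)$ and the localisation near $o$ is harmless but unnecessary --- the cited theorem is formulated with admissible balls (anchored balls of radius $\le r_0$ included), so it already produces the $(1+r(\cdot))^{-1}$ weight; the missing ingredient is not the localisation but the Poincar\'e inequality on the end.
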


\begin{proof}[Proof of Lemma~\ref{lemma_513}]
For $(\romannumeral1)$, see \cite[Theorem 2.4]{Gilles} or Remark~\ref{remark_weak11}. Next, for $(\romannumeral2)$, \cite[Theorem 5.6.5]{Saloff_aspect} guarantees \eqref{$P_q'$} holds on $M$ for all $q\ge 1$ (since $M$ satisfies \eqref{QD}). Therefore, it follows by \cite[Theorem 2.3]{DR}, see also \cite{Minerbe}, that \eqref{Doubling},$(\textrm{RCE})$,\eqref{$P_q'$} and \eqref{RD} imply the above $L^p$ Hardy's inequality for all $1\le p < \nu$.

\end{proof}

\section{Endpoint Estimate for Riesz Transform}\label{section_3}

In this section, we give a proof for Theorem~\ref{R_Endpoint}.

\begin{proof}[Proof of Theorem~\ref{R_Endpoint}]
Recall notations from Section~\ref{section_2}. We decompose the Riesz transform into three parts: 
\begin{equation*}
    \nabla \Delta^{-1/2} = T_1 + T_2 + T_3.
\end{equation*}
It follows by Lemma~\ref{Lemma_A} (also see \cite[Proposition 4.1, 4.2]{Gilles}) that $T_1,T_2$ are bounded on $L^\nu$ and hence are of restricted weak type $(\nu,\nu)$. Therefore, it suffices to study operator
\begin{equation*}
    \mathcal{T}: f \mapsto r(\cdot)^{-1} \int_{B(o,\kappa r(\cdot))^c} \frac{r(y)}{V(o,r(y))} |f(y)| dy.
\end{equation*}
Define function
\begin{equation*}
    \mathcal{G}_x(y) = \frac{r(y) }{V(o,r(y))} \chi_{M\setminus B(o,\kappa r(x))}(y).
\end{equation*}
It follows by Hardy-Littlewood inequality and Hölder's inequality,
\begin{align*}
    \mathcal{T}f(x) \le r(x)^{-1} \int_0^\infty f^*(t) \mathcal{G}_x^*(t) dt\\
    \le r(x)^{-1} \|f\|_{(\nu,1)} \|\mathcal{G}_x\|_{(\nu',\infty)},
\end{align*}
where 
\begin{align*}
    \|\mathcal{G}_x\|_{(\nu',\infty)}^{\nu'} &= \sup_{\lambda>0} \lambda^{\frac{\nu}{\nu-1}} \textrm{vol} \left( \left\{y\in M; \mathcal{G}_x(y) > \lambda  \right\} \right)\\
    &= \sup_{\lambda>0} \lambda^{\frac{\nu}{\nu-1}} \textrm{vol} \left( \left\{y\in M; r(y)\ge \kappa r(x) \quad \textrm{and} \quad V(o,r(y)) < \frac{r(y)}{\lambda}  \right\} \right)\\
    &:= \sup_{\lambda>0} \lambda^{\frac{\nu}{\nu-1}} \textrm{vol}(\mathcal{I}).
\end{align*}
Note that since $r(y)\ge \kappa r(x)$, one deduces by \eqref{RD0}
\begin{align*}
    \left(\frac{r(y)}{\kappa r(x)}\right)^\nu \lesssim \frac{V(o,r(y))}{V(o,\kappa r(x))}.
\end{align*}
Hence, for $y\in \mathcal{I}$,
\begin{align*}
    V(o,r(y)) < \frac{r(y)}{\lambda} = \frac{\kappa r(x)}{\lambda} \frac{r(y)}{\kappa r(x)} \le C \frac{\kappa r(x)}{\lambda} \frac{V(o,r(y))^{1/\nu}}{V(o,\kappa r(x))^{1/\nu}}.
\end{align*}
This implies
\begin{equation*}
    V(o,r(y)) \le C \lambda^{\frac{-\nu}{\nu-1}} \left(\frac{r(x)}{V(o,\kappa r(x))^{1/\nu}} \right)^{\frac{\nu}{\nu-1}}.
\end{equation*}
Consequently,
\begin{align*}
    \textrm{vol}(\mathcal{I}) &\le \textrm{vol} \left( \left\{ y\in M; V(o,r(y)) \le C \lambda^{\frac{-\nu}{\nu-1}} \left(\frac{r(x)}{V(o,\kappa r(x))^{1/\nu}} \right)^{\frac{\nu}{\nu-1}}  \right\} \right)\\
    &\le C \lambda^{\frac{-\nu}{\nu-1}} \left(\frac{r(x)}{V(o,\kappa r(x))^{1/\nu}} \right)^{\frac{\nu}{\nu-1}}.
\end{align*}
It is now plain that
\begin{align*}
    \|\mathcal{G}_x\|_{(\nu',\infty)} \lesssim \frac{r(x)}{V(o,\kappa r(x))^{1/\nu}}.
\end{align*}
To this end, one concludes that
\begin{align*}
    \mathcal{T}f(x) \lesssim \frac{\|f\|_{(\nu,1)}}{V(o,\kappa r(x))^{1/\nu}}.
\end{align*}
The proof is then complete since $V(o,\kappa r(x))^{-1/\nu} \in L^{\nu,\infty}$:
\begin{align*}
\sup_{\lambda >0} &\lambda^\nu \textrm{vol} \left( \left\{ x\in M; V(o,\kappa r(x))^{-1/\nu} > \lambda \right\} \right)\\
&=\sup_{\lambda >0} \lambda^\nu \textrm{vol} \left( \left\{ x\in M; V(o,\kappa r(x)) < \lambda^{-\nu} \right\} \right)\\
&\lesssim 1.
\end{align*}

\begin{remark}
We mention that this result aligns with the observation from \cite{L}: the Riesz transform on metric cone is not even of weak type $(p_0,p_0)$, where $p_0$ is an endpoint in the metric cone setting. Some similar results could be found in \cite{H} and the author's Ph.D. thesis \cite{He_phdthesis}. 
\end{remark}

\end{proof}

\section{From Hardy to Reverse Riesz}\label{section_4}

In this section, we verify the implication $(\romannumeral1)\implies (\romannumeral2)$ in Theorem~\ref{RR_Equivalence}.

As mentioned before, under the assumptions \eqref{QD}, \eqref{VC} and $(\textrm{RCE})$, $M$ satisfies \eqref{Doubling} and \eqref{DUE}. Then, a result of \cite{G1} guarantees the estimate for time-derivative of heat kernel:
\begin{equation*}
    |\partial_t e^{-t\Delta}(x,y)| \lesssim \frac{1}{t V(x,\sqrt{t})} e^{-\frac{d(x,y)^2}{ct}},\quad \forall x,y\in M,\quad \forall t>0.
\end{equation*}

Recall that $o\in M$ is a fixed point and we use notion $r(x) = d(x,o)$ for all $x\in M$.

\begin{theorem}\label{thm_H_RR}
Let $M$ be a complete Riemannian manifold satisfying \eqref{QD}, \eqref{VC}, \eqref{RD} $(\nu>1)$ and $(\textrm{RCE})$. Then, \eqref{eq_Hp} implies \eqref{eq_RRp} for all $p\in (1,\nu)$.
\end{theorem}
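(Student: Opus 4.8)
## Proof Proposal for Theorem~\ref{thm_H_RR}

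The plan is to exploit the intrinsic link between the reverse Riesz inequality and the Riesz transform via a duality/bilinear argument, following the harmonic annihilation philosophy sketched in the introduction. For $f \in C_c^\infty(M)$, write $\|\Delta^{1/2}f\|_p = \sup \{ \langle \Delta^{1/2}f, g\rangle : g\in C_c^\infty(M), \|g\|_{p'}\le 1\}$, and rewrite the pairing as $\langle \nabla f, \nabla\Delta^{-1/2}g\rangle$ using the self-adjointness identity $\Delta^{1/2} = \Delta \cdot \Delta^{-1/2}$ and integration by parts (this requires a standard density/resolution-of-identity justification, since $\Delta^{-1/2}g$ need not be compactly supported, but the heat kernel bounds and the Gaussian decay make this routine). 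Then I would split the adjoint Riesz transform using the three-regime decomposition from Lemma~\ref{Lemma_A}: $\nabla\Delta^{-1/2} = T_1 + T_2 + T_3$, so that
\begin{equation*}
\langle \Delta^{1/2}f, g\rangle = \langle \nabla f, (T_1+T_2)^* (\nabla f)^{\sharp}\rangle + \langle \nabla f, T_3^* g\rangle,
\end{equation*}
wait --- more precisely $\langle \nabla f, T_j^* g\rangle$ paired appropriately; the point is that $T_1$ and $T_2$ are bounded on every $L^p$, $p\in(1,\infty)$ and $p\in(2,\infty)$ respectively, by Lemma~\ref{Lemma_A}. Since we are in the range $p\in(1,\nu)$ and the assumptions already give \eqref{R_p} for $1<p\le 2$, the contributions of $T_1$ and $T_2$ to $\|\Delta^{1/2}f\|_p$ are controlled by $\|\nabla f\|_p$ directly (for $T_2$ on $(1,2]$ one uses that $T_1+T_3$ captures the rest and the global boundedness of the Riesz transform on $(1,2]$ from Remark~\ref{remark_weak11}). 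So everything reduces to the ``far'' regime operator $T_3$.

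For the $T_3$ piece I would use the kernel bound from Lemma~\ref{Lemma_A}(iii): $|T_3(x,y)| \lesssim \frac{r(y)}{r(x)}\frac{1}{V(o,r(y))}$ on the region $r(y)\ge \kappa r(x)$. The key algebraic move is to insert the Hardy weight: write
\begin{equation*}
\langle \nabla f, T_3^* g\rangle = \int_M \int_M \overline{g(x)}\, T_3(x,y)\, \nabla f(y)\, dy\, dx = \int_M \frac{\overline{g(x)}}{r(x)} \left( \int_{r(y)\ge \kappa r(x)} r(x) T_3(x,y)\, \nabla f(y)\, dy\right) dx,
\end{equation*}
so that the factor $r(x)^{-1}$ is peeled off to act on $g$ --- but the honest gain comes from putting the Hardy inequality on the $f$ side. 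Concretely, after an integration by parts transferring $\nabla$ off $f$ (this is where harmonic annihilation enters: in regime $(3)$ the kernel $T_3$ should be expressible so that integrating by parts produces a $\Delta_y$ acting on an essentially harmonic profile, killing the worst term, or at least producing a kernel that gains a power of $r(y)^{-1}$), one bounds the inner $y$-integral by $\|f/r(\cdot)\|_p \lesssim \|\nabla f\|_p$ via \eqref{eq_Hp}, paired against an $L^{p'}$ bound for the resulting $x$-kernel. The remaining estimate is then a Schur-type test: one must verify that the operator with kernel $\frac{1}{r(x)}\cdot\frac{r(y)}{V(o,r(y))}\chi_{r(y)\ge\kappa r(x)}$, pre- and post-composed with the weight $r(\cdot)$ as needed, maps $L^p \to L^p$ (or the appropriate weighted pairing is finite) --- and this is exactly the computation that appeared in the proof of Theorem~\ref{R_Endpoint}, now run in the $L^p$ rather than Lorentz endpoint scale, which works precisely because of the reverse doubling exponent $\nu > 1$ and $p<\nu$.

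The main obstacle I expect is making the integration-by-parts / harmonic-annihilation step rigorous on a general manifold with quadratically decaying curvature, where one does not have the explicit compactification $\overline{M}$ and explicit harmonic functions $a_j$ available as in \cite{CCH}. On $\mathbb{R}^n\#\mathbb{R}^n$ one literally knows $\Delta^{-1/2}(x,y)\sim \sum a_j(x)|y|^{-j}$ with $a_{n-1}$ harmonic; here one must instead extract the ``annihilation'' purely from the kernel estimates of Lemma~\ref{Lemma_A} together with the fact that $T_3$ arises from $\nabla_x$ applied to a piece of $\Delta^{-1/2}$ whose $x$-behaviour is, up to lower order, governed by the Laplacian, so that a compensated/second-order cancellation is available. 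I would therefore try to bypass an explicit asymptotic expansion: estimate $\langle \nabla f, T_3^* g\rangle$ by duality against $\|f/r\|_p$ directly using only $|T_3(x,y)|\lesssim \frac{r(y)}{r(x)V(o,r(y))}$ and a mild improvement of this bound (gradient-in-$y$ version) that follows from the heat kernel time-derivative estimate recalled just before the theorem, combined with the Schur test already performed in Section~\ref{section_3}. If that cruder route suffices --- and the weight $1/r(x)$ together with $p<\nu$ suggests it does --- then the proof is short; the delicate point is confirming that the $y$-side really reduces to $\|f/r(\cdot)\|_p$ and not to something involving $\|\nabla f\|_p$ in a way that re-creates the obstruction at $p=\nu$.
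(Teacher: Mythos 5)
Your architecture is the paper's: dualize $\langle\Delta^{1/2}f,g\rangle=\langle\nabla f,\nabla\Delta^{-1/2}g\rangle$, dispose of the regimes where the Riesz kernel is bounded on $L^{p'}$, and on the far regime integrate by parts so that Hardy acts on $f$ while a Schur-type operator acts on $g$. But the step you leave as a hope is exactly where the content is, and the fallback you propose provably fails. The ``cruder route'' of your last paragraph --- a Schur test directly on $|T_3(x,y)|\lesssim \frac{r(y)}{r(x)V(o,r(y))}$ --- requires $\frac{r(\cdot)}{V(o,r(\cdot))}\in L^p$ near infinity, and by \eqref{RD} that holds only for $p>\frac{\nu}{\nu-1}$; after the initial duality reduction (using Theorem~\ref{A} for $p'<\nu$) the range still to be treated is precisely $p\le\frac{\nu}{\nu-1}$ (and all of $(1,\nu)$ when $\nu\le 2$). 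So the integration by parts is not optional: it is the mechanism that upgrades the $y$-decay from $\frac{r(y)}{V(o,r(y))}$ to $\frac{1}{r(y)V(o,r(y))}$, after which the operator $u\mapsto r(\cdot)\int_{r(y)\gtrsim r(\cdot)}\frac{u(y)}{r(y)V(o,r(y))}\,dy$ is of weak type $(q',q')$ for \emph{every} $q$ and hence bounded on $L^{p'}$; this is what closes the argument together with \eqref{eq_Hp}.

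Moreover, you cannot integrate by parts on $T_3$ itself: regime $(3)$ is cut out by a sharp indicator, and differentiating it produces uncontrolled boundary terms. The paper's fix is to insert a Cheeger--Colding cutoff $\eta_{\kappa^{-1}r(y)}(x)$ (with $\|\nabla\eta\|_\infty\lesssim r(y)^{-1}$, $\|\Delta\eta\|_\infty\lesssim r(y)^{-2}$) inside the heat-kernel representation $\int_0^\infty\eta\,\nabla_xp_t(x,y)\frac{dt}{\sqrt{\pi t}}$, at the price of a complementary piece dominated by $T_1+T_2$. Integration by parts then produces \emph{two} kernels, $\eta\,\Delta_xp_t$ and $\nabla_x\eta\cdot\nabla_xp_t$; the first is the implicit harmonic annihilation ($\Delta_xp_t=-\partial_tp_t$, controlled by the time-derivative Gaussian bound you correctly identified, gaining two powers of $r(y)$ over $\nabla_xp_t$), and the second must be estimated separately using that $\nabla\eta$ is supported where $r(x)\simeq r(y)$. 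Both give $\lesssim\frac{1}{r(y)V(o,r(y))}\chi_{r(x)\lesssim r(y)}$. Finally, watch the variables: in $\langle\nabla f,\nabla\Delta^{-1/2}g\rangle$ the function $f$ sits in the \emph{output} (near) variable and $g$ in the far one, so the Hardy weight $r(x)^{-1}$ must be peeled onto $f$, not onto $g$ as in your displayed identity.
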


\begin{proof}[Proof of Theorem~\ref{thm_H_RR}]
First of all, by \cite[Theorem~A]{Gilles} and duality, we only need to confirm \eqref{eq_RRp} for
\begin{align}\label{range}
    p\in \begin{cases}
        (1,\nu), & 1<\nu\le 2,\\
        (1,\frac{\nu}{\nu-1}], & \nu>2.
    \end{cases}
\end{align}
Let $o\in M$ be fixed. By \cite{CheegerColding}, there exists a smooth function $\eta_\epsilon$ $(\epsilon>0)$ such that

$\bullet$ $\textrm{supp}(\eta_\epsilon) \subset B(o,4\epsilon/3)$,

$\bullet$ $\eta_\epsilon=1$ on $B(o,\epsilon)$,

$\bullet$ $\|\eta_\epsilon\|_\infty + \epsilon\|\nabla \eta_\epsilon\|_\infty + \epsilon^2 \|\Delta \eta_\epsilon\|_\infty \lesssim 1$.

Let $\kappa \ge 4$. Then $\eta_{\kappa^{-1} r(y)}(x)$ is a smooth function supported in the regime: $\{4r(y)\ge 3\kappa r(x)\}$. Next, for the Riesz transform, $\nabla \Delta^{-1/2}$, we have resolution to identity
\begin{align*}
    \nabla \Delta^{-1/2} = \int_0^\infty \nabla e^{-t\Delta} \frac{dt}{\sqrt{\pi t}}.
\end{align*}
Denote by $p_t(x,y)$ the heat kernel. We may decompose
\begin{align*}
    \nabla \Delta^{-1/2}(x,y) &= \int_0^\infty \eta_{\kappa^{-1}r(y)}(x) \nabla p_t(x,y) \frac{dt}{\sqrt{\pi t}} + \int_0^\infty \left[1 - \eta_{\kappa^{-1}r(y)}(x) \right] \nabla p_t(x,y) \frac{dt}{\sqrt{\pi t}}\\
    &:= R_1(x,y) + R_2(x,y).
\end{align*}
Note that $R_2$ is supported in the range $\{(x,y)\in M\times M \setminus \{x=y\}; r(y)\le \kappa r(x)\}$. Obviously, 
\begin{align*}
    |R_2(x,y)| \lesssim T_{1}(x,y) + T_{2}(x,y),
\end{align*}
where $T_{1}, T_{2}$ (introduced in Lemma~\ref{Lemma_A}) are operators of $\nabla \Delta^{-1/2}$ restricted to the domains:
\begin{align*}
    \left\{(x,y)\in M\times M \setminus \{x=y\}; r(y)\le \kappa r(x), d(x,y)\ge \kappa^{-1} r(x) \right\},
\end{align*}
and
\begin{align*}
    \left\{(x,y)\in M\times M \setminus \{x=y\}; d(x,y)\le \kappa^{-1}r(x)\right\}
\end{align*}
respectively. Then, it follows by Lemma~\ref{Lemma_A}, also see \cite[Proposition 4.1, 4.2]{Gilles}, that 
\begin{equation}\label{eq_R21,R22}
    \|R_2\|_{q\to q} \lesssim \|T_{1}+T_{2}\|_{q\to q} \lesssim 1
\end{equation}
for all $q\in (2,\infty)$.

Now, let $f,g\in C_c^\infty(M)$ with $\|g\|_{p'}=1$. We consider inner product
\begin{align*}
    \langle \Delta^{1/2}f, g \rangle = \left\langle \int_0^\infty \Delta e^{-t\Delta}f \frac{dt}{\sqrt{\pi t}}, g \right\rangle.
\end{align*}
By the positivity and self-adjointness of $\Delta$,
\begin{align*}
    \langle \Delta e^{-t\Delta}f, g \rangle = \langle f, \Delta e^{-t\Delta}g \rangle = \langle \nabla f, \nabla e^{-t\Delta}g \rangle, \quad \forall t>0.
\end{align*}
Hence,
\begin{align*}
    \langle \Delta^{1/2}f, g \rangle =  \left\langle \nabla f, \int_0^\infty \nabla e^{-t\Delta}g \frac{dt}{\sqrt{\pi t}} \right\rangle = \langle \nabla f, R_1 g \rangle + \langle \nabla f, R_2 g\rangle.
\end{align*}
By \eqref{eq_R21,R22}, it is plain that
\begin{align*}
    |\langle \nabla f, R_2 g\rangle|\le \|\nabla f\|_p \|R_2g\|_{p'} \lesssim \|\nabla f\|_p.
\end{align*}
Therefore, we may focus on the following bilinear form
\begin{equation*}
    \mathcal{I}(f,g) := \int_M \nabla f(x) \cdot \int_M \int_0^\infty \eta_{\kappa^{-1}r(y)}(x) \nabla p_t(x,y) g(y)\frac{dt}{\sqrt{\pi t}} dy dx.
\end{equation*}
Apply integration by parts to get
\begin{align*}
    \mathcal{I}(f,g) &= \int_M f(x) \int_M \int_0^\infty \eta_{\kappa^{-1}r(y)}(x) \Delta_x p_t(x,y) g(y) \frac{dt}{\sqrt{\pi t}} dy dx\\
    &- \int_M f(x) \int_M \int_0^\infty \nabla_x [\eta_{\kappa^{-1}r(y)}(x)] \cdot \nabla p_t(x,y) g(y) \frac{dt}{\sqrt{\pi t}} dy dx.
\end{align*}

The following estimate is crucial.

\begin{lemma}\label{lemma_akcakn}
Under the assumptions of Theorem~\ref{thm_H_RR}, one has
\begin{align*}
    \int_0^\infty |\nabla_x [\eta_{\kappa^{-1}r(y)}(x)] \cdot \nabla p_t(x,y)| + |\eta_{\kappa^{-1}r(y)}(x) \Delta_x p_t(x,y)| \frac{dt}{\sqrt{t}} \lesssim \frac{\chi_{r(x)\le 4\kappa^{-1}r(y)/3}}{r(y) V(o, r(y))}. 
\end{align*}
\end{lemma}

\begin{proof}[Proof of Lemma \ref{lemma_akcakn}]
Recall that on $M$ (see for example \cite[Section 3.2, 3.3]{Gilles}), we have gradient estimate
\begin{equation*}
    |\nabla p_t(x,y)| \le \left[ \frac{1}{\sqrt{t}} + \frac{1}{r(x)} \right] \frac{C}{V(x,\sqrt{t})} e^{-\frac{d(x,y)^2}{ct}}, \quad \forall x,y\in M, \quad \forall t>0.
\end{equation*}
Note that for $r(y)\ge \kappa r(x)$, $d(x,y)\simeq r(y)$. Moreover, for $x\in \textrm{supp}(\nabla \eta_{R})$ ($R>0$), we have $r(x)\simeq R$ (to be precise, in our situation, $\nabla \eta_{R}$ supports on $\{\kappa^{-1}r(y)\le r(x) \le 4\kappa^{-1}r(y)/3\}$). It is then plain that by \eqref{Doubling},
\begin{align*}
    \int_0^{r(x)^2} |\nabla_x [\eta_{\kappa^{-1}r(y)}(x)]| |\nabla p_t(x,y)| \frac{dt}{\sqrt{t}} &\lesssim \int_0^{r(x)^2} \frac{\chi_{r(x)\simeq r(y)}}{r(y)} \frac{e^{-\frac{d(x,y)^2}{ct}}}{V(x,\sqrt{t})} \frac{dt}{t}\\
    &\lesssim \frac{\chi_{r(x)\simeq r(y)}}{r(y) V(x,r(y))} \int_0^{r(y)^2} e^{-\frac{r(y)^2}{ct}} \left(\frac{r(y)}{\sqrt{t}}\right)^\mu \frac{dt}{t}\\
    &\lesssim \frac{\chi_{r(x)\le 4\kappa^{-1}r(y)/3}}{r(y) V(o, r(y))}.
\end{align*}
Similarly, one can use \eqref{RD} instead:
\begin{align*}
    \int_{r(x)^2}^\infty |\nabla_x [\eta_{\kappa^{-1}r(y)}(x)]| |\nabla p_t(x,y)| \frac{dt}{\sqrt{t}} &\lesssim \int_{r(x)^2}^\infty \frac{\chi_{r(x)\simeq r(y)}}{r(y) r(x)} \frac{e^{-\frac{d(x,y)^2}{ct}}}{V(x,\sqrt{t})} \frac{dt}{\sqrt{t}}\\
    &\lesssim \frac{\chi_{r(x)\simeq r(y)}}{r(y)^2 V(o,r(y))} \int_{r(y)^2}^\infty e^{-\frac{r(y)^2}{ct}} \left(\frac{r(y)}{\sqrt{t}}\right)^\nu \frac{dt}{\sqrt{t}}\\
    &\lesssim \frac{\chi_{r(x)\le 4\kappa^{-1}r(y)/3}}{r(y) V(o, r(y))}.
\end{align*}
As for the second part of the integral, recall time-derivative estimate:
\begin{equation*}
    |\partial_t p_t(x,y)| \le \frac{C}{t V(x,\sqrt{t})} e^{-\frac{d(x,y)^2}{ct}}, \quad \forall x,y\in M,\quad \forall t>0.
\end{equation*}
One deduces
\begin{align*}
    \int_0^\infty |\eta_{\kappa^{-1}r(y)}(x)| |\Delta_x p_t(x,y)| \frac{dt}{\sqrt{t}} &\lesssim  \int_0^\infty \frac{t^{-\frac{3}{2}} \chi_{r(x)\le 4\kappa^{-1}r(y)/3}}{V(x,\sqrt{t})} e^{-\frac{r(y)^2}{ct}} dt\\
    &\lesssim \int_0^{r(y)^2} \frac{t^{-\frac{3}{2}} \chi_{r(x)\le 4\kappa^{-1}r(y)/3}}{V(o,r(y))} \left(\frac{r(y)}{\sqrt{t}}\right)^\mu e^{-\frac{r(y)^2}{ct}} dt\\
    &+ \int_{r(y)^2}^\infty \frac{t^{-\frac{3}{2}} \chi_{r(x)\le 4\kappa^{-1}r(y)/3}}{V(o,r(y))} \left(\frac{r(y)}{\sqrt{t}}\right)^\nu e^{-\frac{r(y)^2}{ct}} dt\\
    &\lesssim \frac{\chi_{r(x)\le 4\kappa^{-1}r(y)/3}}{r(y) V(o, r(y))}
\end{align*}
as desired.
\end{proof}

Define operator
\begin{equation*}
    \mathcal{T}: u \mapsto r(\cdot) \int_{B(o,3\kappa r(\cdot)/4)^c} \frac{u(y)}{r(y) V(o, r(y))}dy.
\end{equation*}

The next ingredient of the proof is the following.

\begin{lemma}\label{lemma_njcndjcn}
Under the assumptions of Theorem~\ref{thm_H_RR}, $\mathcal{T}$ is of weak type $(q',q')$ for all $1<q<\infty$.
\end{lemma}

\begin{proof}[Proof of Lemma~\ref{lemma_njcndjcn}]
By Hölder's inequality, for all $1<q<\infty$,
\begin{align*}
    |\mathcal{T}u(x)|\lesssim r(x) \|u\|_{q'} \left( \int_{B(o,3\kappa r(\cdot)/4)^c} \frac{dy}{r(y)^q V(o, r(y))^q} \right)^{\frac{1}{q}}.
\end{align*}
Introduce the Riemann-Stieljes measure: $V(r) = V(o,r)$. One infers that by setting $\delta=3\kappa r(x)/4$ and integrating by parts, the integral on the RHS above equals
\begin{align*}
    \int_\delta^\infty \frac{1}{r^q V(r)^q} dV(r) = - \frac{1}{\delta^q V(\delta)^{q-1}}
    - \int_\delta^\infty V(r) \left[-q r^{-q-1} V(r)^{-q} dr - q V(r)^{-q-1} r^{-q} dV(r) \right].
\end{align*}
Hence, one concludes
\begin{align*}
    \int_\delta^\infty \frac{1}{r^q V(r)^q} dV(r) \lesssim \frac{1}{\delta^q V(\delta)^{q-1}} + \int_\delta^\infty \frac{1}{r^{q+1} V(r)^{q-1}} dr.
\end{align*}
By \eqref{RD}, 
\begin{equation*}
    \frac{1}{V(r)^{q-1}} \lesssim \frac{\delta^{\nu(q-1)}}{V(\delta)^{q-1}} r^{-\nu(q-1)}.
\end{equation*}
Consequently, for all $1<q<\infty$,
\begin{align*}
    \int_\delta^\infty \frac{1}{r^{q+1} V(r)^{q-1}} dr \lesssim \frac{\delta^{\nu(q-1)}}{V(\delta)^{q-1}} \int_\delta^\infty r^{-q-1-\nu(q-1)} dr \lesssim \delta^{-q} V(\delta)^{1-q}.
\end{align*}
Immediately, we deduce
\begin{align*}
    |\mathcal{T}u(x)| \lesssim \frac{\|u\|_{q'}}{V(o,r(x))^{\frac{q-1}{q}}}.
\end{align*}
As a consequence, for all $\lambda>0$,
\begin{align*}
    \textrm{vol} \left(\left\{ x\in M; |\mathcal{T}u(x)| > \lambda  \right\}\right) &\lesssim \textrm{vol} \left(\left\{ x\in M; V(o,r(x)) < \left(\frac{\|u\|_{q'}}{\lambda} \right)^{q'}  \right\}\right) \\
    &\lesssim \left(\frac{\|u\|_{q'}}{\lambda} \right)^{q'},
\end{align*}
i.e. $\mathcal{T}$ is of weak type $(q',q')$ for all $1<q<\infty$.
\end{proof}

Now, by interpolation, one gets in particular that for $p$ in the range \eqref{range},
\begin{equation*}
    \|\mathcal{T}g\|_{p'} \lesssim \|g\|_{p'}.
\end{equation*}
Therefore, by Hardy inequality \eqref{eq_Hp}, and Lemma~\ref{lemma_akcakn}, Lemma~\ref{lemma_njcndjcn}
\begin{align*}
    |\mathcal{I}(f,g)| &\le \left|\int_M f(x) \int_M \int_0^\infty \eta_{\kappa^{-1}r(y)}(x) \Delta_x p_t(x,y) g(y) \frac{dt}{\sqrt{\pi t}} dy dx\right|\\
    &+ \left|\int_M f(x) \int_M \int_0^\infty \nabla_x [\eta_{\kappa^{-1}r(y)}(x)] \cdot \nabla p_t(x,y) g(y) \frac{dt}{\sqrt{\pi t}} dy dx\right|\\
    &\lesssim \int_M \frac{|f(x)|}{r(x)} |\mathcal{T}g(x)| dx \lesssim \left\| \frac{f(\cdot)}{r(\cdot)} \right\|_p \|\mathcal{T}g\|_{p'}\\
    &\lesssim \|\nabla f\|_p \|g\|_{p'}.
\end{align*}
It then follows by duality
\begin{align*}
    \|\Delta^{1/2}f\|_p &\le \sup_{\|g\|_{p'}=1} |\langle \nabla f, R_1g \rangle| + \sup_{\|g\|_{p'}=1}|\langle \nabla f, R_2g \rangle| \\
    &\lesssim \sup_{\|g\|_{p'}=1} |\mathcal{I}(f,g)| + \sup_{\|g\|_{p'}=1} \|\nabla f\|_p \|R_2g\|_{p'}\\
    &\lesssim \|\nabla f\|_{p}.
\end{align*}
The proof of Theorem~\ref{thm_H_RR} is now complete.

\end{proof}

\begin{remark}
Note that the above proof can also be used to show the implication: $(\romannumeral3) \implies (\romannumeral2)$. Indeed, by the argument above, one only needs to show the inner product $\mathcal{I}(f,g)$ is bounded by some constant multiple of $\|\nabla f\|_p \|g\|_{p'}$. Now, instead of treating $\mathcal{T}$ in Lemma~\ref{lemma_njcndjcn}, we consider operator
\begin{align*}
    \Tilde{\mathcal{T}}: u \mapsto \frac{r(\cdot)}{V(o,r(\cdot))^{1/\mu}} \int_{B(o,3\kappa r(\cdot)/4)^c} \frac{u(y)}{r(y) V(o, r(y))}dy.
\end{align*}
By some similar estimates as in Lemma~\ref{lemma_njcndjcn}, one checks that 
\begin{align*}
    |\Tilde{\mathcal{T}}u(x)| \lesssim \frac{\|u\|_{q'}}{V(o,r(x))^{1/q' + 1/\mu}} = \frac{\|u\|_{q'}}{V(o,r(x))^{1/(q^*)'}},
\end{align*}
where $q^* = \frac{\mu q}{\mu - q}$ for all $1<q<\nu\le \mu$. Immediately, one concludes that $\Tilde{\mathcal{T}}$ is of weak type $(q', (q^*)')$. Hence, by interpolation, one deduces for all $0<s\le \infty$,
\begin{align*}
    \|\Tilde{\mathcal{T}}u\|_{\left((p^*)',s\right)} \lesssim \|u\|_{(p',s)},\quad \forall u\in C_c^\infty(M).
\end{align*}
Choosing $s=p'$, one confirms by Lemma~\ref{lemma_akcakn} and \eqref{eq_WSp} that
\begin{align*}
    |\mathcal{I}(f,g)|\lesssim \int_M \frac{f(x)}{\rho(x)} |\Tilde{\mathcal{T}}g(x)| dx \le \|f/\rho \|_{(p^*,p)} \|\Tilde{\mathcal{T}}g\|_{\left((p^*)',p'\right)} \lesssim \|\nabla f\|_p \|g\|_{p'}
\end{align*}
as desired.

\end{remark}

\begin{remark}
It is also worth mentioning that in the above proof, the \eqref{RD} condition can be weakened to \eqref{RD0}. Additionally, it is interesting to note that the harmonic leading coefficient, within the setting of manifolds satisfying the \eqref{QD} curvature condition, appears in exactly the same range as the case studied in \cite{H2}, i.e., the range where $r(y)\gg r(x)$. The proof of Theorem~\ref{thm_H_RR} closely follows the approach in \cite[Theorem~1.3]{H2}, where a so-called implicit harmonic annihilation is employed.
\end{remark}

\section{From Reverse Riesz to Weighted Sobolev}\label{section_5}
In this section, we study weighted Sobolev inequality. Classically, one studies Sobolev inequality in the form
\begin{align}\label{S_n^p}
    \left(\int_M |f|^{\frac{np}{n-p}} dx \right)^{\frac{n-p}{np}} \lesssim \left(\int_M |\nabla f|^p dx \right)^{\frac{1}{p}},\quad 1\le p<n,
\end{align}
where $n$ usually stands for the dimension of $M$. However, such inequality can be disturbed by very simple perturbation of the geometry. For instance, on the cylinder $\mathbb{R}^n \times \mathbb{S}^{m-1}$, \eqref{S_n^p} fails for any $1\le p< n+m-1$. This simple example in a sense suggests that, generally speaking, the topological dimension may not reflect the "real" geometry of a manifold.

In \cite{Minerbe}, the author proved that on any complete $n\textit{-}$dimensional Riemannian manifolds $M$ $(n\ge 3)$. If $M$ has non-negative Ricci curvature and satisfies \eqref{RD0} with some $\nu>2$, then it admits a weighted Sobolev inequality
\begin{align}\label{WS_p_n}
    \left(\int_M |f/\rho_o|^{\frac{n p}{n-p}}dx\right)^{1-p/n} \lesssim \int_M |\nabla f|^p dx, \quad \forall f\in C_c^\infty(M),
\end{align}
with $p=2$ and
\begin{align*}
    \rho_o(x) = \frac{r(x)}{V(o,r(x))^{1/n}}.
\end{align*}
Subsequently, it follows by \cite[Theorem~1.1]{Tewodrose}, a special case, that if one further assumes Poincaré inequality \eqref{P_q} and $(\textrm{D}_n)$ for some $1\le q < \min (\nu,n)$, then \eqref{WS_p_n} holds with the same weight $\rho_o$ for $q\le p<\min(\nu,n)$.  

On $\mathbb{R}^n$, it is well-known that by \cite{O'neil}, the Sobolev inequalities admit its sharpest form
\begin{align*}
    \|f\|_{\left(\frac{np}{n-p},p\right)} \lesssim \|\nabla f\|_p,\quad 1\le p<n.
\end{align*}
Then, Bakry, Coulhon, Ledoux and Saloff-Coste \cite{BCLS} proved that the classical Sobolev inequality, including its sharpest Lorentz form, can be obtained by a family of "weak type" Sobolev inequalities.

In what follows, we may prove the weighted Sobolev inequality in a different way. The idea is based on \cite[Lemma~5.4]{KVZ}, see also \cite{Mael}, where the authors proved Hardy type inequalities via the estimates of Riesz potential. In particular, we verify the implication $(\romannumeral2)\implies (\romannumeral3)$ in Theorem~\ref{RR_Equivalence}.

\begin{lemma}\label{thm_RR_WS}
Let $M$ be a complete Riemannian manifold satisfying \eqref{Doubling}, \eqref{RD}, \eqref{DUE}. Then, \eqref{eq_RRp} implies \eqref{eq_WSp} for all $1<p<\nu$. 

\end{lemma}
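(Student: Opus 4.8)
The plan is to realize $\Delta^{1/2}$ as a bounded map into an appropriate weighted Lorentz space by combining the reverse Riesz inequality $(\textrm{RR}_p)$ with an estimate on the Riesz \emph{potential} $\Delta^{-1/2}$ in the spirit of \cite[Lemma~5.4]{KVZ}. Write $h=\Delta^{1/2}f$, so that $f=\Delta^{-1/2}h$ and $\|h\|_p \lesssim \|\nabla f\|_p$ by $(\textrm{RR}_p)$. It therefore suffices to prove the weighted potential bound
\begin{equation*}
    \left\| \Delta^{-1/2}h / \rho(\cdot) \right\|_{(p^*,p)} \lesssim \|h\|_p, \qquad 1<p<\nu,
\end{equation*}
with $\rho(x) = r(x)/V(o,r(x))^{1/\mu}$ and $p^* = \mu p/(\mu-p)$. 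The gain in integrability here is exactly the Sobolev gain, so the engine must be a pointwise or global smoothing estimate for $\Delta^{-1/2}$ on a doubling space satisfying \eqref{DUE}.

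The key step I would carry out first is to record the standard on-diagonal/off-diagonal bounds for the kernel $\Delta^{-1/2}(x,y) = c\int_0^\infty p_t(x,y)\,t^{-1/2}\,dt$. Using \eqref{DUE} upgraded to the Gaussian upper bound \eqref{UE} (available under \eqref{Doubling}) one gets $\Delta^{-1/2}(x,y) \lesssim \int_{d(x,y)^2}^\infty V(x,\sqrt t)^{-1}\,t^{-1/2}\,dt \lesssim d(x,y)/V(x,d(x,y))$ after invoking \eqref{Doubling} to control the time integral. Hence $\Delta^{-1/2}$ is dominated by a fractional-integral-type operator $I_1 h(x) = \int_M \frac{d(x,y)}{V(x,d(x,y))}|h(y)|\,dy$. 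The real work is the mapping property of $I_1$: I would prove the weak-type endpoint bounds $\|I_1 h/\rho\|_{(q^*,\infty)} \lesssim \|h\|_q$ for $q$ near $1$ and near $\nu$ by the usual splitting of the integral into $d(x,y)\le R$ and $d(x,y)>R$, optimizing $R$; the reverse-doubling \eqref{RD} is what makes the far part of the integral converge and produces the weight $V(o,r(x))^{1/\mu}$, while \eqref{Doubling} handles the near part. Real interpolation (the Marcinkiewicz theorem for Lorentz spaces, as in \cite{BCLS}) between two such weak-type estimates then yields the strong bound into $L^{(p^*,p)}$ for all $1<p<\nu$, which is precisely the sharp Lorentz form demanded in \eqref{eq_WSp}.

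The main obstacle I anticipate is the presence of the \emph{anchored} weight $\rho(x)$ built from $V(o,r(x))$ rather than the local volume $V(x,r(x))$: the naive fractional-integration estimate produces $d(x,y)/V(x,d(x,y))$, and one must trade $V(x,\cdot)$ for $V(o,\cdot)$ near the base point and in the different ends. This is where \eqref{RD} (reverse doubling, and in the anchored form it essentially forces $V(o,r)\gtrsim (r/s)^\nu V(o,s)$) must be used carefully — in particular to absorb the region $r(y)\gg r(x)$, exactly the regime where the kernel bound $|T_3(x,y)|\lesssim \frac{r(y)}{r(x)V(o,r(y))}$ from Lemma~\ref{Lemma_A} is the relevant one. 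A secondary technical point is that $\mu\ge\nu$ in general, so the exponent $p^* = \mu p/(\mu-p)$ is smaller than what one would get from $\nu$; one must check the weak-type endpoints are compatible with this $\mu$, which again comes down to using \eqref{Doubling} with its exponent $\mu$ for the local part and \eqref{RD} with $\nu$ only for convergence at infinity. Once the two endpoint weak-type inequalities are in hand, the interpolation and the reduction via $(\textrm{RR}_p)$ are routine.
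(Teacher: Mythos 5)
Your proposal is correct and follows essentially the same route as the paper: reduce via $(\textrm{RR}_p)$ (together with density of $\Delta^{1/2}C_c^\infty$ in $L^p$) to the mapping property of $u\mapsto \rho(\cdot)^{-1}\Delta^{-1/2}u$, dominate the kernel by $d(x,y)/\bigl(\rho(x)V(x,d(x,y))\bigr)$ using \eqref{UE} with \eqref{Doubling} and \eqref{RD}, obtain weak type $(q,q^*)$ by the Hedberg-type splitting at an optimized radius (the paper takes $\epsilon(x)r(x)$ with $\epsilon$ built from $\mathcal{M}(u)$ and $\|u\|_p$), and finish by Marcinkiewicz interpolation in Lorentz spaces with second index $p$. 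The anchored-weight issue you flag is handled by the one-line doubling comparison $V(o,r(x))\lesssim V(x,r(x))$, not by the $T_3$ kernel bound.
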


\begin{proof}[Proof of Lemma~\ref{thm_RR_WS}]
Since $\Delta^{1/2}C_c^\infty(M)$ is dense in $L^q(M)$ for $1\le q<\infty$, see \cite[Lemma~1]{Russ}, one only needs to show 
\begin{align*}
    \mathcal{T}: u \mapsto \rho(\cdot)^{-1} \Delta^{-1/2}u(\cdot)
\end{align*}
is bounded from $L^p \to L^{p^*,p}$ for all $1<p<\nu$ and $u\in C_c^\infty(M)$.

Using the resolution to identity, we have
\begin{align*}
    \Delta^{-1/2} = \int_0^\infty e^{-t\Delta} t^{-1/2} dt,
\end{align*}
where we omit the non-essential constant.

It is well-known that by \cite{grigor_heatkernel_Gaussian}, the conjunction of \eqref{Doubling} and \eqref{DUE} implies \eqref{UE}. Thus, one has kernel estimate
\begin{align*}
    \mathcal{T}(x,y)&\le \rho(x)^{-1} \int_0^\infty \frac{C}{V(x,\sqrt{t})} e^{-\frac{d(x,y)^2}{ct}} t^{-1/2} dt\\
    &\lesssim \rho(x)^{-1} \frac{1}{V(x,d(x,y))} \int_0^{d(x,y)^2} \frac{V(x,d(x,y))}{V(x,\sqrt{t})} e^{-\frac{d(x,y)^2}{ct}} t^{-1/2} dt\\
    &+ \rho(x)^{-1} \frac{1}{V(x,d(x,y))} \int_{d(x,y)^2}^\infty \frac{V(x,d(x,y))}{V(x,\sqrt{t})} e^{-\frac{d(x,y)^2}{ct}} t^{-1/2} dt.
\end{align*}
By \eqref{Doubling}, the first term above is thus bounded by
\begin{align*}
    \frac{d(x,y)^\mu}{\rho(x) V(x,d(x,y))} \int_0^{d(x,y)^2} e^{-\frac{d(x,y)^2}{ct}} t^{\frac{1-\mu}{2}-1} dt \lesssim \frac{d(x,y)}{\rho(x) V(x,d(x,y))}.
\end{align*}
While for the second term, one uses \eqref{RD} instead and bounds it above by
\begin{align*}
    \frac{d(x,y)^\nu}{\rho(x) V(x,d(x,y))} \int_{d(x,y)^2}^\infty e^{-\frac{d(x,y)^2}{ct}} t^{\frac{1-\nu}{2}-1} dt \lesssim \frac{d(x,y)}{\rho(x) V(x,d(x,y))},
\end{align*}
since $1<\nu/p<\nu$.

Define 
\begin{align*}
    \epsilon(x) = \frac{\|u\|_p^{p/\mu}}{V(x,r(x))^{1/\mu} \mathcal{M}(u)(x)^{p/\mu}},
\end{align*}
where $\mathcal{M}$ stands for the Hardy-Littlewood maximal function. We set
\begin{align*}
    \mathcal{O} = \left\{x\in M; \epsilon(x) \le 1\right\}.
\end{align*}
For $x\in \mathcal{O}$, we proceed in a standard way. 
\begin{align*}
    |\mathcal{T}u(x)| &\lesssim \frac{1}{\rho(x)} \int_{d(x,y)\le \epsilon(x) r(x)} \frac{d(x,y)}{V(x,d(x,y))} |u(y)|dy\\
    &+ \frac{1}{\rho(x)} \int_{d(x,y)\ge \epsilon(x) r(x)} \frac{d(x,y)}{V(x,d(x,y))} |u(y)|dy := \mathcal{I} + \mathcal{J}.
\end{align*}
Observe that by \eqref{Doubling}
\begin{align}\nonumber
    \mathcal{I} &= \rho(x)^{-1} \sum_{j=0}^\infty \int_{\epsilon(x) r(x)/2^{j+1}< d(x,y)\le \epsilon(x) r(x)/2^j} \frac{d(x,y)}{V(x,d(x,y))} |u(y)|dy\\ \nonumber
    &\lesssim \frac{V(x,r(x))^{1/\mu}}{r(x)} \sum_{j=0}^\infty \frac{\epsilon(x) r(x)}{2^{j}} \frac{1}{V(x, \epsilon(x) r(x)/2^{j+1})} \int_{B(x,\epsilon(x) r(x)/2^j)} |u(y)|dy\\
    &\lesssim \epsilon(x) V(x,r(x))^{1/\mu} \mathcal{M}(u)(x) = \mathcal{M}(u)(x)^{1-p/\mu} \|u\|_p^{p/\mu}.
\end{align}
As for $\mathcal{J}$, one deduces by Hölder's inequality
\begin{align*}
    \mathcal{J} \le \rho(x)^{-1} \|u\|_p \left(\int_{B(x,\epsilon(x) r(x))^c} \frac{d(x,y)^{p'}}{V(x,d(x,y))^{p'}} dy\right)^{1/p'}.
\end{align*}
Introduce Riemann-Stieljes measure: $V(r) = V(x,r)$. One infers by integrating by parts, that the above integral equals (set $R = \epsilon(x) r(x)$ for a moment)
\begin{align*}
    \int_{R}^\infty \frac{r^{p'}}{V(r)^{p'}} dV(r) = \left[\frac{r^{p'}}{V(r)^{p'-1}}\right]_{r=R}^\infty - \int_{R}^\infty V(r) \left( p'r^{p'-1} V(r)^{-p'} dr -p'V(r)^{-p'-1}r^{p'}dV(r) \right),
\end{align*}
which gives
\begin{align*}
    \int_{R}^\infty \frac{r^{p'}}{V(r)^{p'}} dV(r) \lesssim \left| \left[\frac{r^{p'}}{V(r)^{p'-1}}\right]_{r=R}^\infty \right| + \int_{R}^\infty  \frac{r^{p'-1}}{V(r)^{p'-1}} dr.
\end{align*}
Note again by \eqref{RD}, one infers
\begin{align*}
    \frac{r^{p'-1}}{V(r)^{p'-1}} \lesssim \frac{R^{\nu(p'-1)}}{V(R)^{p'-1}} r^{-\nu(p'-1)+p'-1}.
\end{align*}
Consequently, since $1<p<\nu$, one concludes
\begin{align*}
    \mathcal{J} \lesssim \|u\|_p \frac{V(x,r(x))^{1/\mu}}{r(x)} \frac{\epsilon(x) r(x)}{V(x, \epsilon(x) r(x))^{1/p}}.
\end{align*}
Now, recall that for $x\in \mathcal{O}$, one has $\epsilon(x) \le 1$. Then, \eqref{Doubling} guarantees that
\begin{align}\nonumber
    \mathcal{J} &\lesssim \epsilon(x) \|u\|_p V(x,r(x))^{1/\mu-1/p} \left(\frac{V(x,r(x))}{V(x,\epsilon(x) r(x))}\right)^{1/p}\\
    &\lesssim \epsilon(x)^{1-\mu/p} \frac{\|u\|_p}{V(x,r(x))^{1/p^*}} = \mathcal{M}(u)(x)^{1-p/\mu} \|u\|_p^{p/\mu}.
\end{align}
Next, one considers $x\in \mathcal{O}^c$. It is plain that by a similar argument above and the assumption $\epsilon(x) \ge 1$, we have
\begin{align*}
    |\mathcal{T}u(x)|&\lesssim  \frac{1}{\rho(x)} \int_{d(x,y)\le r(x)} \frac{d(x,y)}{V(x,d(x,y))} |u(y)|dy\\
    &+ \frac{1}{\rho(x)} \int_{d(x,y)\ge r(x)} \frac{d(x,y)}{V(x,d(x,y))} |u(y)|dy\\
    &\lesssim V(x,r(x))^{1/\mu} \mathcal{M}(u)(x) + \frac{\|u\|_p}{V(x,r(x))^{1/p^*}}\\
    &\lesssim \frac{\|u\|_p}{V(x,r(x))^{1/p^*}}\lesssim \frac{\|u\|_p}{V(o,r(x))^{1/p^*}}.
\end{align*}
To this end, for $\lambda >0$, it is clear that by maximal theorem
\begin{align*}
    \textrm{vol} \left(\left\{ x\in M; |\mathcal{T}u(x)|>\lambda
 \right\}\right) &\lesssim \textrm{vol} \left(\left\{ x\in \mathcal{O}; \mathcal{M}(u)(x)^{1-p/\mu} \|u\|_p^{p/\mu} >\lambda
 \right\}\right)\\
 &+ \textrm{vol} \left(\left\{ x\in M\setminus \mathcal{O}; \frac{\|u\|_p}{V(o,r(x))^{1/p^*}} >\lambda
 \right\}\right) \\
 &\le \frac{\|u\|_p^{p^* p/\mu} \left\|\mathcal{M}(u)^{1-p/\mu}\right\|_{p^*}^{p^*}}{\lambda^{p^*}}\\
 &+ \textrm{vol} \left(\left\{ x; V(o,r(x)) < \left(\frac{\|u\|_p}{\lambda}\right)^{p^*} 
 \right\}\right)\\
 &\lesssim \left(\frac{\|u\|_p}{\lambda}\right)^{p^*}.
\end{align*}
That is $\mathcal{T}$ is of weak type $(p,p^*)$ for all $1<p<\nu$. By interpolation on Lorentz spaces, see for example \cite[Theorem~1.4.19]{G}, we conclude that for all $0<s\le \infty$,
\begin{align*}
    \|\mathcal{T}u\|_{(p^*,s)}\lesssim \|u\|_{(p,s)}.
\end{align*}
The proof of Lemma~\ref{thm_RR_WS} follows by choosing $s=p$.

\end{proof}

\begin{remark}
It is clear to see that the above argument also proves the implication $(\romannumeral2)\implies (\romannumeral1)$, which generalizes the result of \cite[Theorem~1.5]{Mael} (this simple generalization was also mentioned in \cite[Question~2.4]{DR}). In fact, one only needs to modify the estimates of $\mathcal{I}$ and $\mathcal{J}$. Replacing $\rho(x) = r(x)$ and $\epsilon = 1$, a similar argument above gives
\begin{align*}
    \mathcal{I} \lesssim \mathcal{M}(u)(x) \quad \textrm{and}\quad \mathcal{J}\lesssim \frac{\|u\|_{p'}}{V(o,r(x))^{1/p'}}.
\end{align*}
The implication $(\romannumeral2)\implies (\romannumeral1)$ then follows directly by maximal theorem and interpolation. 
\end{remark}

\section{From Weighted Sobolev to Hardy}\label{section_6}
To end the proof of Theorem~\ref{RR_Equivalence}, we have to confirm the implication $(\romannumeral3) \implies (\romannumeral1)$. Arguably, on Euclidean space, the classical Sobolev inequality and Hardy inequality are almost "equivalent", see for example \cite[Proposition~1.4.1, Proposition~1.4.3]{BEL}.

\begin{lemma}\label{lemma_S_H}
Let $M$ be a complete Riemannian manifold. Then, \eqref{eq_WSp} implies \eqref{eq_Hp}, where $\mu>1$ and $1\le p<\mu$.
\end{lemma}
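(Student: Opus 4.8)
The plan is to deduce \eqref{eq_Hp} from \eqref{eq_WSp} by a pointwise factorization followed by a Hölder-type estimate in Lorentz spaces; no geometry beyond completeness enters, in agreement with the hypotheses. The starting observation is that, directly from $\rho(x)=r(x)/V(o,r(x))^{1/\mu}$,
\[
    \frac{|f(x)|}{r(x)} = \frac{|f(x)|}{\rho(x)}\cdot w(x), \qquad w(x):=\frac{1}{V(o,r(x))^{1/\mu}}, \qquad x\neq o.
\]
Setting $g:=|f|/\rho$, the lemma reduces to the bound $\|gw\|_p\lesssim\|g\|_{(p^*,p)}$: once this is known, \eqref{eq_WSp} gives $\|f/r\|_p=\|gw\|_p\lesssim\|g\|_{(p^*,p)}=\|f/\rho\|_{(p^*,p)}\lesssim\|\nabla f\|_p$, which is \eqref{eq_Hp}.

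The first ingredient is that $w\in L^{\mu,\infty}(M)$, with quasi-norm bounded by an absolute constant. Indeed,
\[
    d_w(\lambda)=\mathrm{vol}\big(\{x\in M: V(o,r(x))<\lambda^{-\mu}\}\big)\lesssim \lambda^{-\mu},\qquad \lambda>0,
\]
since the anchored volume function $\rho\mapsto V(o,\rho)$ is non-decreasing; this is exactly the computation carried out at the end of the proof of Theorem~\ref{R_Endpoint}. Consequently $w^*(t)\le t^{-1/\mu}$ for all $t>0$.

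The second ingredient is the exponent identity $1/p^*=1/p-1/\mu$, equivalently $p/p^*-1=-p/\mu$. Combining it with the Hardy--Littlewood rearrangement inequality and the bound on $w^*$,
\begin{align*}
    \int_M \Big(\frac{|f|}{r}\Big)^p d\mu = \int_M (gw)^p d\mu
    &\le \int_0^\infty g^*(t)^p\, w^*(t)^p\, dt
    \le \int_0^\infty g^*(t)^p\, t^{-p/\mu}\, dt \\
    &= \int_0^\infty \big(t^{1/p^*}g^*(t)\big)^p\,\frac{dt}{t} = \|g\|_{(p^*,p)}^p.
\end{align*}
(Equivalently, this step is an instance of O'Neil's inequality $L^{p^*,p}\cdot L^{\mu,\infty}\hookrightarrow L^{p,p}=L^p$.) Together with \eqref{eq_WSp} this proves the lemma.

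There is no real obstacle here: the argument is a factorization plus a one-line Lorentz--Hölder estimate, and the exponent arithmetic makes everything fit. The only point requiring (minimal) care is the verification that $V(o,r(\cdot))^{-1/\mu}\in L^{\mu,\infty}$, which relies solely on the monotonicity of the anchored volume function and is the same elementary fact already exploited in Section~\ref{section_3}.
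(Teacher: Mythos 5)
Your proposal is correct and follows essentially the same route as the paper: the same factorization $|f|/r=(|f|/\rho)\cdot V(o,r(\cdot))^{-1/\mu}$, the same weak-$L^\mu$ bound on the weight via the distribution function, and the Lorentz--H\"older (O'Neil) pairing $L^{p^*,p}\cdot L^{\mu,\infty}\hookrightarrow L^p$, which you merely unpack via the Hardy--Littlewood rearrangement inequality instead of citing it. No gaps.
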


\begin{proof}[Proof of Lemma~\ref{lemma_S_H}]
It is clear that by Hölder's inequality in Lorentz space,
\begin{align*}
    \left(\int_M \left| \frac{f(x)}{\rho(x)} \right|^p \left| \frac{\rho(x)}{r(x)}\right|^p dx \right)^{1/p} \le \left\| \frac{f}{\rho} \right\|_{(p^*,p)} \left\| \frac{\rho}{r}\right\|_{(\mu,\infty)}
\end{align*}
since
\begin{align*}
    \frac{1}{p} = \frac{1}{p^*} + \frac{1}{\mu}, \quad \frac{1}{p} = \frac{1}{p} + \frac{1}{\infty}.
\end{align*}
Now, by \eqref{eq_WSp}, the first term is apparently bounded by $\|\nabla f\|_p$. While for the second term, one checks easily for all $\lambda >0$,
\begin{align*}
    \left\| \frac{\rho}{r}\right\|_{(\mu,\infty)}^\mu &= \sup_{\lambda>0}\lambda^{\mu}\textrm{vol} \left( \left\{x\in M; \rho(x)/r(x) > \lambda \right\} \right) \\
    &= \sup_{\lambda>0}\lambda^{\mu} \textrm{vol} \left( \left\{x\in M; V(o,r(x)) < \lambda^{-\mu} \right\} \right)\\
    &\le 1.
\end{align*}
The proof follows immediately.

\end{proof}

\section{Application: proof of Theorem~\ref{thm_RR_QD}}\label{section_7}
This section is devoted to prove Theorem~\ref{thm_RR_QD}.

Note that in \cite{DR}, the authors proved that under the assumptions of \eqref{Doubling}, \eqref{UE}, \eqref{RD} for some $\nu>1$ and $(\textrm{RCE})$. Then, Poincaré on the end \eqref{$P_q'$} $(1<q<\min(2,\nu))$ implies \eqref{eq_RRp} for $q\le p<2$. Next, as a corollary, \cite[Corollary 1.7]{DR}, for manifold with Ricci curvature bounded from below, satisfying \eqref{QD}, \eqref{VC}, $(\textrm{RCE})$ and \eqref{RD} $(\nu>1)$, \eqref{eq_RRp} holds for all $p\in (1,\infty)$. In the following, to better understand the interplay between Riesz and reverse Riesz transforms, we present an alternative proof by using the method established in previous sections. 

Most of the arguments are followed by the proof of Theorem~\ref{thm_H_RR}. We only need to make some necessary modifications. Recall notions $R_1, R_2,\mathcal{I},\eta,\mathcal{T}$ from Section~\ref{section_4}.

\begin{proof}[Proof of Theorem~\ref{thm_RR_QD}]

By duality, it suffices to consider \eqref{eq_RRp} with $p$ in the range \eqref{range}. The only obstacle is that the Hardy inequality we got (see Lemma~\ref{lemma_513}) is slightly different to the normal one \eqref{eq_Hp}.

Let $f\in C_c^\infty(M)$. Pick some cut-off function $\mathcal{X}\in C^\infty(M)$ such that $\textrm{supp}(\mathcal{X})\subset B(o,2)$ and $\mathcal{X}=1$ on $B(o,1)$. Split $f = f\mathcal{X} + f(1-\mathcal{X}):= f_0+f_1$. We start by claiming
\begin{align}\label{eq_claim_local}
    \|\Delta^{1/2}f_0\|_p \lesssim \|\nabla f\|_p
\end{align}
for all $p$ in the range \eqref{range}.

By spectral theory, one writes
\begin{align*}
    \Delta^{1/2} = \int_0^1 \Delta (\Delta+k^2)^{-1} dk + \int_1^\infty \Delta (\Delta+k^2)^{-1} dk := \mathcal{O}_L + \mathcal{O}_H.
\end{align*}

\begin{lemma}\label{lemma_RRp_high_energy}
Let $M$ be a complete Riemannian manifold with bounded geometry. Then, the high energy part:
\begin{equation*}
    \mathcal{O}_H: u \mapsto \int_1^\infty \Delta (\Delta+k^2)^{-1}u(\cdot) dk
\end{equation*}
satisfies
\begin{equation*}
    \|\mathcal{O}_H u\|_q \lesssim \|\nabla u\|_q, \quad \forall u\in C_c^\infty(M)
\end{equation*}
for all $1<q<\infty$.
\end{lemma}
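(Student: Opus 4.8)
The plan is to recognize $\mathcal{O}_H$ as a spectral multiplier of $\Delta$, transfer it to the Hodge--de Rham Laplacian $\Delta_1$ on $1$-forms via the intertwining $d\Delta=\Delta_1 d$, and then factor it as a \emph{local Riesz transform on $1$-forms} (which is always bounded on bounded geometry, by Bakry's theorem) composed with a \emph{bounded spectral multiplier of $\Delta_1$} — estimating the two factors separately so that the asserted inequality is not used circularly. First I would compute the multiplier: since $\int_1^\infty \frac{\lambda}{\lambda+k^2}\,dk=\sqrt\lambda\,\arctan\sqrt\lambda$, we have $\mathcal{O}_H=m(\Delta)$ with $m(\lambda)=\lambda\, n(\lambda)$ and $n(\lambda)=\frac{\arctan\sqrt\lambda}{\sqrt\lambda}$. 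The function $n$ is smooth on $[0,\infty)$ (an even function of $\sqrt\lambda$), satisfies $n(\lambda)\simeq(1+\lambda)^{-1/2}$ and $|\lambda^j n^{(j)}(\lambda)|\lesssim 1$, and extends holomorphically and boundedly to every sector $\Sigma_\theta=\{|\arg\lambda|<\theta\}$ with $\theta<\pi$ (its only branch points, coming from $\arctan\sqrt\lambda$, sit at $\lambda=-1$). Hence, for any $a\ge 1$, the function $F_a(\lambda):=(\lambda+a)^{1/2}n(\lambda)$ is a bounded Mikhlin multiplier which is also bounded and holomorphic on each $\Sigma_\theta$, $\theta<\pi$.

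Using that $d$ intertwines the two Laplacians, so that $d\,\phi(\Delta)=\phi(\Delta_1)\,d$ for suitable $\phi$, I would write, for $u\in C_c^\infty(M)$ and $a\ge 1$ to be fixed,
\[
\mathcal{O}_H u=\Delta\, n(\Delta)u=d^{*}d\, n(\Delta)u=d^{*}\bigl[n(\Delta_1)\,du\bigr]=\bigl[d^{*}(\Delta_1+a)^{-1/2}\bigr]\circ\bigl[(\Delta_1+a)^{1/2}n(\Delta_1)\bigr]\circ\nabla u ,
\]
identifying $du$ with the $1$-form $\nabla u$. Writing $A:=d^{*}(\Delta_1+a)^{-1/2}$ and $B:=(\Delta_1+a)^{1/2}n(\Delta_1)$, the proof reduces to the operator-norm bounds $\|A\|_{L^q(\Lambda^1)\to L^q}\lesssim 1$ and $\|B\|_{L^q(\Lambda^1)\to L^q(\Lambda^1)}\lesssim 1$ for all $1<q<\infty$, after which $\|\mathcal{O}_H u\|_q\le\|A\|\,\|B\|\,\|\nabla u\|_q\lesssim\|\nabla u\|_q$. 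It is crucial to estimate the three factors separately: telescoping them recovers $\mathcal{O}_H=m(\Delta)$, and $\|m(\Delta)u\|_q$ cannot be bounded by $\|\nabla u\|_q$ without passing through the very inequality we are proving (or through the unbounded-in-general scalar Riesz transform).

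For $A$, its Banach-space adjoint is $(\Delta_1+a)^{-1/2}d=d(\Delta+a)^{-1/2}=\nabla(\Delta+a)^{-1/2}$, the local Riesz transform of $M$. Bounded geometry gives $\operatorname{Ric}\ge-K$ for some $K\ge 0$, so by Bakry's theorem $\nabla(\Delta+a)^{-1/2}$ is bounded on $L^{q'}$ for every $1<q'<\infty$ once $a$ is large enough depending only on $K$; this fixes $a$ and gives the bound on $A$. For $B$, one uses the Weitzenböck formula $\Delta_1=\bar\nabla^{*}\bar\nabla+\operatorname{Ric}$ and Kato's inequality to get the domination $|e^{-t\Delta_1}\omega|\le e^{Kt}e^{-t\Delta}|\omega|$, so that $e^{-t(\Delta_1+K)}$ is pointwise dominated by the symmetric submarkovian semigroup $e^{-t\Delta}$; transference of the bounded $H^\infty$-functional calculus from submarkovian-dominated semigroups then endows $\Delta_1$ (after a harmless shift) with a bounded $H^\infty(\Sigma_\theta)$-calculus on $L^q(\Lambda^1)$, $1<q<\infty$, whence $\|B\|=\|F_a(\Delta_1)\|_{L^q(\Lambda^1)\to L^q(\Lambda^1)}\lesssim 1$ since $F_a$ is bounded holomorphic on each $\Sigma_\theta$. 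One may alternatively invoke a Mikhlin-type spectral multiplier theorem for $\Delta_1$ under bounded geometry (Gaussian bounds for $e^{-t\Delta_1}$ at small time with an $e^{Ct}$ correction at large time, plus local doubling), using that $F_a$ is Mikhlin and holomorphic near infinity.

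\textbf{Main obstacle.} The delicate step is the boundedness of the $1$-form multiplier $B$ on $L^q$ under \emph{only} bounded geometry, i.e. with no global volume doubling: one must manage the shift so that $F_a$ — whose branch point at $\lambda=-1$ is forced by $\arctan\sqrt\lambda$ — remains holomorphic (or Mikhlin) on the region dictated by the lower Ricci bound $K$. Taking $a$ large handles Bakry's requirement; to also accommodate the functional calculus one may, if necessary, split $\int_1^\infty=\int_1^{R}+\int_{R}^\infty$ with $R=R(K)$ large, treating $\int_{R}^\infty\Delta(\Delta+k^2)^{-1}\,dk$ exactly as above with $a\sim R^2$, and the finite range $\int_1^{R}\Delta(\Delta+k^2)^{-1}\,dk$ — whose multiplier is $\simeq\min(\lambda,1)\lesssim (1+\lambda)^{-1/2}\lambda$ up to a constant — by the identical factorization. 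Everything else is routine ($L^2$-convergence of the $k$-integral, density, and the intertwining identities for $C_c^\infty$).
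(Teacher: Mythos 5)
Your argument is correct in substance and lands on the same pivot as the paper, but it proves the key boundedness by a genuinely different route. Both proofs reduce the lemma to the $L^q$-boundedness of one and the same operator: writing $n(\lambda)=\int_1^\infty(\lambda+k^2)^{-1}dk=\arctan(\sqrt\lambda)/\sqrt\lambda$, the paper pairs $\langle \mathcal{O}_H u,v\rangle=\langle\nabla u,\nabla n(\Delta)v\rangle$ and must bound $\nabla n(\Delta)=\int_1^\infty\nabla(\Delta+k^2)^{-1}dk$ on $L^{q'}$, while your factorization $\mathcal{O}_H=d^{*}n(\Delta_1)\,d$ requires bounding $d^{*}n(\Delta_1)$ on $L^q$ of $1$-forms --- and these two operators are exactly adjoint to each other. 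The difference is what happens next: the paper simply cites \cite[Proposition~5.1]{HS}, where this high-energy Riesz transform is shown to be bounded on all $L^{q'}$, $1<q'<\infty$, using only bounded geometry and finite propagation speed (equivalently Davies--Gaffney). You instead re-prove this from scratch by splitting $d^{*}n(\Delta_1)=\bigl[d^{*}(\Delta_1+a)^{-1/2}\bigr]\bigl[(\Delta_1+a)^{1/2}n(\Delta_1)\bigr]$ and invoking Bakry's theorem for the first factor and an $H^\infty(\Sigma_\theta)$-calculus for the Hodge Laplacian (via Kato domination by the submarkovian semigroup) for the second. Your route is self-contained modulo these two classical inputs and makes the mechanism transparent; the paper's is a two-line duality argument whose entire content is outsourced to \cite{HS}. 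Your explicit remark that the three factors must be estimated separately to avoid circularity is exactly right.

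The one thin spot is the piece you yourself flag. After shifting by the Ricci lower bound $K$, the sector on which $\Delta_1+K$ has bounded calculus maps under $\mu\mapsto\mu-K$ into a region touching the branch cut of $\arctan\sqrt\lambda$ at $\lambda=-1$ whenever $K\ge1$, so the splitting $\int_1^R+\int_R^\infty$ with $R^2>K$ is genuinely needed, and your claim that the finite range is handled ``by the identical factorization'' does not quite stand as written: the multiplier $\sqrt\lambda\bigl[\arctan\sqrt\lambda-\arctan(\sqrt\lambda/R)\bigr]$ still carries the branch point at $-1$. The clean fix is to treat $\int_1^R$ without any $H^\infty$-calculus on forms: for each fixed $k\in[1,R]$ one has $\|d^{*}(\Delta_1+k^2)^{-1}\|_{q\to q}=\|\nabla(\Delta+k^2)^{-1}\|_{q'\to q'}\lesssim_{K,R}1$, e.g.\ by writing $\nabla(\Delta+k^2)^{-1}=\nabla(\Delta+a_0)^{-1/2}\cdot(\Delta+a_0)^{1/2}(\Delta+k^2)^{-1}$ with the second factor a bounded holomorphic multiplier of the submarkovian operator $\Delta$ (its pole sits at $-k^2\le-1$), and then integrating over the compact range $[1,R]$. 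With that repair your proof is complete.
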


\begin{proof}[Proof of Lemma~\ref{lemma_RRp_high_energy}]
The result is a direct consequence of duality and bounded geometry property. Let $v\in C_c^\infty(M)$ with $\|v\|_{q'}\le 1$. By the positivity and self-adjointness of $\Delta$, one has
\begin{align*}
    \langle \mathcal{O}_H u, v \rangle &= \left\langle \nabla u, \int_1^\infty \nabla (\Delta+k^2)^{-1}v dk \right\rangle\\
    &\le \|\nabla u\|_q \left\| \int_1^\infty \nabla (\Delta+k^2)^{-1}v dk   \right\|_{q'}.
\end{align*}
Note that the operator above applying to $v$ is nothing but the high energy part of the Riesz transform (see \cite[Proposition~5.1]{HS}). By \cite[Proposition 5.1]{HS}, one infers that
\begin{equation*}
    \left\| \int_1^\infty \nabla (\Delta+k^2)^{-1}v dk   \right\|_{q'}\lesssim \|v\|_{q'}
\end{equation*}
for all $1<q<\infty$. We mention that the proof of \cite[Proposition~5.1]{HS} is employable provided that $M$ has bounded geometry and $\Delta$ satisfies finite speed of propagation, see for example \cite{Si,S} for a formal definition. Next, by \cite{Si}, this finite speed propagation property is equivalent to the so-called Davies-Gaffney condition with respect to $\Delta$, which holds on any complete Riemannian manifolds. The result follows by ranging all $\|v\|_{q'}\le 1$.
\end{proof}

Next, we treat the low energy part $\mathcal{O}_L$. Observe that 
\begin{equation*}
    \Delta(\Delta+k^2)^{-1} = \textrm{Id} - k^2 (\Delta+k^2)^{-1}.
\end{equation*}
Hence, for $1<q<\infty$
\begin{align}\label{eq_O_L}
    \|\mathcal{O}_L\|_{q\to q} &= \left\| \int_0^1 \Delta (\Delta+k^2)^{-1} dk \right\|_{q\to q} \le \int_0^1 \left\| \Delta (\Delta+k^2)^{-1} \right\|_{q\to q} dk\\ \nonumber
    &\le 1 + \int_0^1 k^2 \|(\Delta+k^2)^{-1}\|_{q\to q} dk \lesssim 1,
\end{align}
where the last inequality follows by writing $(\Delta+k^2)^{-1} = \int_0^\infty e^{-t\Delta} e^{-tk^2}dt$ and using the contractivity of heat semigroup. Now, it is plain that by Lemma~\ref{lemma_RRp_high_energy} and \eqref{eq_O_L} that
\begin{align*}
    \|\Delta^{1/2}f_0\|_p \lesssim \|f_0\|_p + \|\nabla f_0\|_p \lesssim \|f\|_{L^p(B(o,2))} + \|\nabla f\|_p.
\end{align*}
Note that for $x\in B(o,2)$, one checks $1+r(x)\le 3$. It is then clear that by Lemma~\ref{lemma_513} $(\romannumeral2)$,
\begin{align}\label{eq_fff}
    \|f\|_{L^p(B(o,2))} \lesssim \left\| \frac{f(\cdot)}{1+r(\cdot)} \right\|_p \lesssim \|\nabla f\|_p,\quad \forall 1<p<\nu.
\end{align}
The claim \eqref{eq_claim_local} has been verified.

To this end, we need to confirm
\begin{equation*}
    \|\Delta^{1/2}f_1\|_p \lesssim \|\nabla f\|_p,\quad \forall p\quad \textrm{in \eqref{range}}.
\end{equation*}
It follows by a similar argument as in Section~\ref{section_4}, we have by Lemma~\ref{lemma_akcakn}
\begin{align*}
    |\mathcal{I}(f_1,g)| &\le \left|\int_M f_1(x) \int_M \int_0^\infty \eta_{\kappa^{-1}r(y)}(x) \Delta_x p_t(x,y) g(y) \frac{dt}{\sqrt{\pi t}} dy dx\right|\\
    &+ \left|\int_M f_1(x) \int_M \int_0^\infty \nabla_x [\eta_{\kappa^{-1}r(y)}(x)] \cdot \nabla p_t(x,y) g(y) \frac{dt}{\sqrt{\pi t}} dy dx\right|\\
    &\lesssim \int_M \frac{|f_1(x)|}{r(x)} |\mathcal{T}g(x)| dx.
\end{align*}
Note that for $x\in B(o,1)^c$, one has $1+r(x)\lesssim r(x)$. Thus, by Lemma~\ref{lemma_513} $(\romannumeral2)$ and Lemma~\ref{lemma_njcndjcn}, the above is bounded by
\begin{align*}
    \left\| \frac{f(\cdot)}{1+r(\cdot)} \right\|_p \|\mathcal{T}g\|_{p'} \lesssim \|\nabla f\|_p \|g\|_{p'}.
\end{align*}
It then follows by duality that
\begin{align*}
    \|\Delta^{1/2}f_1\|_p &\le \sup_{\|g\|_{p'}=1} |\langle \nabla f_1, R_1g \rangle| + \sup_{\|g\|_{p'}=1}|\langle \nabla f_1, R_2g \rangle| \\
    &\lesssim \sup_{\|g\|_{p'}=1} |\mathcal{I}(f_1,g)| + \sup_{\|g\|_{p'}=1} \|\nabla f_1\|_p \|R_2g\|_{p'}\\
    &\lesssim \|\nabla f\|_{p} + \|\nabla f_1\|_p \lesssim \|\nabla f\|_p + \|f\|_{L^p(B(o,2))}\\
    &\lesssim \|\nabla f\|_p,
\end{align*}
where the last inequality follows by \eqref{eq_fff}. Combining this with \eqref{eq_claim_local}, the proof of Theorem~\ref{thm_RR_QD} is complete.

\end{proof}

\begin{remark}
The proof of \eqref{eq_fff} is, in fact, a direct consequence of $p\textit{-}$hyperbolicity of the manifold (see, for example, \cite{Marc,Devyver_perturbation}) and this $p\textit{-}$hyperbolic property is an immediate consequence of $L^p$ Hardy's inequality.
\end{remark}

\section{Reverse inequality on Grushin spaces}\label{section_8}

In this section, we employ our method to the Grushin spaces. We prove that even without the assumption $(\textrm{P}_1)$, \eqref{eq_RRp} still holds for all $1<p<\infty$ on the Grushin spaces. Let $n\ge 2$, $m\ge 1$ and $\beta \ge 0$. Recall that the Grushin operator $L$ is defined by
\begin{align*}
    L = - \sum_{i=1}^n \partial_{x_i}^2 - |x|^{2\beta} \sum_{i=1}^m \partial_{y_i}^2 = \Delta_x + |x|^{2\beta} \Delta_y,\quad (x,y)\in \mathbb{R}^{n+m}.
\end{align*}
Define its associated gradient:
\begin{align}\label{Grushin_gradient}
   \nabla_{L} = \left(\nabla_x, |x|^{\beta} \nabla_y\right),
\end{align}
and the length of the gradient: $|\nabla_L f|^2 = \sum_{j=1}^n |\partial_{x_i}f|^2 + |x|^{2\beta} \sum_{j=1}^m |\partial_{y_j} f|^2$. We have integration by parts:
\begin{align*}
    \int_{\mathbb{R}^{n+m}} Lf(\xi) g(\xi) d\xi = \int_{\mathbb{R}^{n+m}} \nabla_L f(\xi) \cdot \nabla_L g(\xi) d\xi,
\end{align*}
for all $f,g\in C_c^\infty(\mathbb{R}^{n+m})$.

In this section, we prove the first part of Theorem~\ref{thm_RR_Grushin}. 

\begin{theorem}\label{thm_RR}
Let $n\ge 2$, $m\ge 1$ and $\beta > 0$. Then, the reverse Riesz inequality associated to $L$ given by \eqref{eq_grushin_operator} holds for all $1<p<\infty$ in the sense:
\begin{align*}
    \|L^{1/2} f\|_p \le C \|\nabla_L f\|_p, \quad 1<p<\infty
\end{align*}
for all $f\in C_c^\infty(\mathbb{R}^{n+m})$.
\end{theorem}

Denote by $e^{-tL}(\xi,\eta)$, the heat kernel of $L$. The Riesz transform associated to $L$ is then given by 
\begin{align*}
    \mathcal{R} = \nabla_L L^{-1/2} = \int_0^\infty \nabla_L e^{-tL} \frac{dt}{\sqrt{\pi t}}.
\end{align*}
In the rest of the paper, we use notations $\xi = (x,y)$ and $\eta = (x',y')$, where $x,x'\in \mathbb{R}^n$ and $y,y'\in \mathbb{R}^m$. Let $\kappa > 4$ be large. We split the kernel space into the following three regimes:
\begin{align*}
    \mathcal{D}_1 &= \{(\xi,\eta)\in \mathbb{R}^{n+m}\times \mathbb{R}^{n+m}; |x-x'| \ge \kappa^{-1} |x| \quad  \textrm{and}\quad |x'| \le \kappa |x| \},\\
    \mathcal{D}_2 &= \{(\xi,\eta)\in \mathbb{R}^{n+m}\times \mathbb{R}^{n+m}; |x-x'| \le \kappa^{-1} |x| \quad  \textrm{and}\quad |x'| \le \kappa |x| \},\\
    \mathcal{D}_3 &= \{(\xi,\eta)\in \mathbb{R}^{n+m}\times \mathbb{R}^{n+m}; |x'| \ge \kappa |x| \}.
\end{align*}
For $i=1,2,3$, we set
\begin{align*}
    \mathcal{R}_i(\xi,\eta) := \nabla_L L^{-1/2}(\xi,\eta) \chi_{\mathcal{D}_i} =  \int_0^\infty \nabla_L e^{-tL}(\xi,\eta) \chi_{\mathcal{D}_i} \frac{dt}{\sqrt{\pi t}}.
\end{align*}
We call $\mathcal{R}_1$ the good part of the Riesz transform, $\mathcal{R}_2$ the diagonal part, and $\mathcal{R}_3$ the bad part.

\subsection{Estimates for the geometry}

In this subsection, we obatin and recall some geometric properties on the Grushin spaces. Consider function space
\begin{align*}
    D = \left\{\phi \in W^{1,\infty}(\mathbb{R}^{n+m}); \sum_{1\le i,j\le n+m} A_{ij} \partial_i \phi \partial_j \phi \le 1  \right\},
\end{align*}
where
\begin{align}
A_{ij} = 
\begin{bmatrix}
    I_n & 0 \\
    0 & |x|^{\beta} I_m
\end{bmatrix},
\end{align}
and $I_n$ refers to the $n\times n$ identity matrix. Then, the canonical distance on $\mathbb{R}^{n+m}$ is defined by
\begin{align*}
    d(\xi,\eta) := \sup_{\phi \in D} \left|\phi(\xi) - \phi(\eta)\right| \in [0,\infty]
\end{align*}
for all $\xi,\eta \in \mathbb{R}^{n+m}$, and its associated geodesic balls can be then described by the set
\begin{align*}
    B(\xi,r) = \left\{ \eta\in \mathbb{R}^{n+m}; d(\eta,\xi) < r \right\}.
\end{align*}
We set $V(\xi,r)$ to be the volume of the ball $B(\xi,r)$ with respect to the usual Lebesgue measure in $\mathbb{R}^{n+m}$, i.e. $|B(\xi,r)|$. First of all, let us recall the following volume and distance estimates from \cite{DS1}.

\begin{lemma}\cite[Proposition~5.1]{DS1}\label{lemma_DS1}
Under the assumptions of Theorem~\ref{thm_RR_Grushin}, the follwoing estimates hold:
\begin{align}
    d(\xi,\eta) = d((x,y);(x',y')) \sim |x-x'| + \frac{|y-y'|}{(|x|+|x'|)^{\beta} + |y-y'|^{\frac{\beta}{\beta+1}}},
\end{align}
and
\begin{align*}
    V(\xi,r) = V((x,y),r) \sim \begin{cases}
        r^{\mathcal{Q}}, & r\ge |x|,\\
        r^{n+m}|x|^{m\beta}, & r\le |x|,
    \end{cases}
\end{align*}
where $\mathcal{Q} = n + m(\beta+1)$ is the so-called homogeneous dimension.

Moreover, the following volume doubling condition holds, i.e.
\begin{align*}
    V(\xi,sr) \le C s^{\mathcal{Q}} V(\xi,r)
\end{align*}
for all $\xi \in \mathbb{R}^{n+m}$ and all $s\ge 1$, $r>0$.
\end{lemma}

Compared to \cite{FL}, we develop the following properties. We use notations $B_n(x,r)$ and $B_m(y,r)$ to denote the Euclidean balls in $\mathbb{R}^n$ and $\mathbb{R}^m$ respectively.

\begin{lemma}\label{lemma_G_volume}
For $r>0$ and $(0,y)\in \mathbb{R}^{n+m}$, there exists constants $c_1,c_2>0$ such that
\begin{align}\label{eq_lemma_volume_1}
    B_n(0,c_1 r) \times B_m \left(y,c_1 r^{\beta+1}\right) \subset B\left((0,y), r\right) \subset B_n(0,c_2 r) \times B_m \left(y,c_2r^{\beta+1}\right) .
\end{align}
Moreover, there exists $c_3,c_4>0$ such that for $(x,y)\in \mathbb{R}^{n+m}$,
\begin{align}\label{eq_lemma_volume_2}
    B_n(x,c_3|x|) \times B_m\left(y,c_3|x|^{\beta+1}\right) \subset B((x,y), |x|) \subset B_n(x,c_4|x|) \times B_m\left(y,c_4|x|^{\beta+1}\right).
\end{align}
\end{lemma}

\begin{proof}[Proof of Lemma~\ref{lemma_G_volume}]
We begin with the forward direction of \eqref{eq_lemma_volume_1}, i.e. 
\begin{align*}
    B_n(0,c_1 r) \times B_m\left(y,c_1 r^{\beta+1}\right) \subset B((0,y), r).
\end{align*}
Let $(x',y')\in B_n(0,r) \times B_m\left(y,r^{\beta+1}\right)$. Then, $|x'|\le r$ and $|y-y'|\le r^{\beta+1}$. Next, by Lemma~\ref{lemma_DS1}, 
\begin{align*}
    d\left((0,y), (x',y')\right) \sim |x'| + \frac{|y-y'|}{|x'|^{\beta} + |y-y'|^{\frac{\beta}{\beta+1}}}.
\end{align*}
Therefore, it suffices to show 
\begin{align}\label{eq_lemma0}
    \frac{|y-y'|}{|x'|^{\beta} + |y-y'|^{\frac{\beta}{\beta+1}}} \lesssim r.
\end{align}
To see this, if $|y-y'|\ge |x'|^{\beta+1}$, then the LHS of \eqref{eq_lemma0} is comparable to $|y-y'|^{\frac{1}{\beta+1}}$, which is bounded by $r$. On the other hand, if $|y-y'|\le |x'|^{\beta+1}$, then the LHS of \eqref{eq_lemma0} is comparable to $|x'|^{-\beta}|y-y'|$, which is bounded by $|x'|\le r$. This completes the forward direction of \eqref{eq_lemma_volume_1}.

For the reverse direction of \eqref{eq_lemma_volume_1}, i.e. 
\begin{align*}
    B\left((0,y), r\right) \subset B_n(0,c_2 r) \times B_m\left(y,c_2r^{\beta+1}\right).
\end{align*}
Let $(x',y')\in B\left((0,y), r\right)$. Then, by Lemma~\ref{lemma_DS1}, \eqref{eq_lemma0} holds. Note that $x'\in B_n(0,cr)$ is clear. To this end, one argues as before, if $|y-y'|\ge |x'|^{\beta+1}$, then
\begin{align*}
    \eqref{eq_lemma0} \lesssim r \iff |y-y'|^{\frac{1}{\beta+1}} \lesssim r \iff y'\in B_m\left((y,cr^{\beta+1}\right).
\end{align*}
If $|y-y'|\le |x'|^{\beta+1}$, one deduces
\begin{align*}
    \eqref{eq_lemma0} \iff |x'|^{-\beta}|y-y'| \lesssim r \iff |y-y'| \lesssim r |x'|^\beta \lesssim r^{\beta+1}
\end{align*}
as desired. The inequality \eqref{eq_lemma_volume_2} can be proved similarly and we omit the details.
\end{proof}

\begin{lemma}\cite[Theorem~6.4, Corollary~6.6]{DS1}\label{le_RS}
Under the assumptions of Theorem~\ref{thm_RR_Grushin}, the heat kernel of the Grushin operator $L$ satisfies 
\begin{align*}
    e^{-tL}(\xi,\eta) \le \frac{C}{V(\xi, \sqrt{t})} e^{-\frac{d(\xi,\eta)^2}{ct}}
\end{align*}
for all $t>0$ and almost all $\xi,\eta \in \mathbb{R}^{n+m}$.
\end{lemma}

To proceed our argument, let us recall the following gradient estimates from \cite[Lemma~5.6]{H3}.

\begin{lemma}\cite[Lemma~5.6]{H3}\label{le_Grushin_gradient}
Under the assumptions of Theorem~\ref{thm_RR_Grushin}, the following gradient estimate holds:
\begin{align*}
    \left|\nabla_{L}e^{-tL}(\xi,\eta)\right|\le \left(\frac{1}{\sqrt{t}} + \frac{1}{|x|}\right) \frac{C}{V(\xi,\sqrt{t})}e^{-\frac{d(\xi,\eta)^2}{ct}},
\end{align*}
where $\xi = (x,y) \in \mathbb{R}^n \setminus \{0\} \times \mathbb{R}^m$.
\end{lemma}

\subsection{Estimates for the good part}

In this subsection, we estimate the $L^p$-boundedness of the good part of the Riesz transform. That is we prove the following result.

\begin{proposition}\label{prop_R1}
Under the assumptions of Theorem~\ref{thm_RR_Grushin}, $\mathcal{R}_1$ is bounded on $L^p$ for all $\frac{n}{n-1} < p < \infty$ in the sense:
\begin{align*}
    \|\mathcal{R}_1 f\|_p \le C \|f\|_p, \quad 2<p<\infty
\end{align*}
for all $f\in C_c^\infty(\mathbb{R}^n \setminus \{0\} \times \mathbb{R}^m)$.
\end{proposition}

The first step is to obtain a suitable estimate for $\mathcal{R}_1(\xi,\eta)$.

\begin{lemma}\label{lemma_kernel_R1}
Under the assumptions of Theorem~\ref{thm_RR}, we have

\begin{align*}
|\mathcal{R}_1(\xi,\eta)| \lesssim \begin{cases}
    |B_n(0,|x|)|^{-1} |B_m\left(y,|x|^{\beta+1}\right)|^{-1} \chi_{B_n(0,\kappa |x|)}(x'), & |y-y'|\le |x|^{\beta+1},\\
    |y-y'|^{-\frac{\mathcal{Q}}{\beta+1}} \chi_{B_n(0,\kappa |x|)}(x') + |x|^{-1} |y-y'|^{-\frac{\mathcal{Q}-1}{\beta+1}} \chi_{B_n(0,\kappa |x|)}(x'), & |y-y'|\ge |x|^{\beta+1}.
\end{cases}
\end{align*}

\end{lemma}

\begin{proof}[Proof of Lemma~\ref{lemma_kernel_R1}]
Write
\begin{align*}
    |\mathcal{R}_1(\xi,\eta)|\le \left(\int_0^{|x|^2} + \int_{|x|^2}^\infty\right) \left|\nabla_L e^{-tL}(\xi,\eta)\right| \chi_{\mathcal{D}_1}(\xi,\eta) \frac{dt}{\sqrt{\pi t}}:= I_1 + I_2.
\end{align*}
Observe that for $(\xi,\eta)\in \mathcal{D}_1$, we have by Lemma~\ref{le_Grushin_gradient} and Lemma~\ref{lemma_DS1},
\begin{align*}
    I_1 &\lesssim \int_0^{|x|^2} \frac{1}{|x|^{m\beta} (\sqrt{t})^{n+m}} e^{-\frac{d(\xi,\eta)^2}{ct}} \frac{dt}{t}\\
    &\lesssim |x|^{-m\beta} d(\xi,\eta)^{-n-m} e^{-c \frac{d(\xi,\eta)^2}{|x|^2}}\\
    &\lesssim |x|^{-m\beta+\epsilon} d(\xi,\eta)^{-n-m-\epsilon},\quad \forall \epsilon\ge 0\\
    &\lesssim d(\xi,\eta)^{-\mathcal{Q}},
\end{align*}
where we choose $\epsilon = m\beta$.

On the other hand, by a similar argument, for $(\xi,\eta)\in \mathcal{D}_1$,
\begin{align*}
    I_2 &\lesssim |x|^{-1} \int_{|x|^2}^\infty t^{-\frac{\mathcal{Q}+1}{2}} e^{-\frac{d(\xi,\eta)^2}{ct}} dt \lesssim |x|^{-1} d(\xi,\eta)^{-\mathcal{Q}+1}.
\end{align*}
Next, note that for $(\xi,\eta)\in \mathcal{D}_1$, we have $|x-x'|\sim |x|$ and $|x|+|x'|\sim |x|$. Hence by Lemma~\ref{lemma_DS1} again,
\begin{align*}
    d(\xi,\eta) \sim |x| + \frac{|y-y'|}{|x|^\beta + |y-y'|^{\frac{\beta}{\beta+1}}} \sim \begin{cases}
        |x|, & |y-y'|\le |x|^{\beta+1},\\
        |y-y'|^{\frac{1}{\beta+1}}, & |y-y'|\ge |x|^{\beta+1}.
    \end{cases}
\end{align*}
The result follows immediately.

\end{proof}

By the above Lemma~\ref{lemma_kernel_R1}, we get pointwise estimate:
\begin{align*}
    |\mathcal{R}_1f(\xi)| \lesssim T_1f(\xi) + T_2f(\xi),
\end{align*}
where
\begin{align*}
    T_1: f \mapsto \frac{1}{V_n(0,|x|) V_m(y,|x|^{\beta+1})} \int_{B_n(0,\kappa|x|)} \int_{B_m(y,|x|^{\beta+1})} |f(x',y')| dy' dx',
\end{align*}
and
\begin{align*}
    T_2: f \mapsto |x|^{-\epsilon} \int_{B_n(0,\kappa|x|)} \int_{B_m(y,|x|^{\beta+1})^c} \frac{|f(x',y')|}{|y-y'|^{\frac{\mathcal{Q}-\epsilon}{\beta+1}}} dy' dx',\quad \epsilon = 0,1.
\end{align*}

For the rest of the article, we use $\mathcal{M}$ to denote the Hardy-Littlewood maximal operator on the Grushin space, i.e.
\begin{align*}
    \mathcal{M}u(\xi) = \sup_{B\ni \xi} \frac{1}{|B|} \int_B |u(\eta)| d\eta,\quad u\in L_{\textrm{loc}}^1(\mathbb{R}^{n+m}),
\end{align*}
where $B \subset \mathbb{R}^{n+m}$ is the geodesic ball with respect to the Riemannian distance introduced before. Since the Grushin space satisfies doubling condition \eqref{Doubling}, $\mathcal{M}$ acts as a bounded operator on $L^p$ for all $1<p\le \infty$.

\begin{lemma}\label{lemma_R1}
$T_1$ is bounded on $L^p$ for all $1<p\le \infty$.
\end{lemma}

\begin{proof}[Proof of Lemma~\ref{lemma_R1}]
Apparently, by Lemma~\ref{lemma_G_volume},
\begin{align*}
    |T_1f(\xi)|\lesssim \frac{1}{V((0,y),|x|)} \int_{B\left((0,y),c_2 \kappa|x|\right)} |f(\eta)| d\eta
    \lesssim \mathcal{M}f(\xi).
\end{align*}
The result follows by maximal theorem.

\end{proof}

\begin{lemma}\label{lemma_good2}
$T_2$ is bounded on $L^p$ for all $\frac{n}{n-1}<p<\infty$.
\end{lemma}

\begin{proof}[Proof of Lemma~\ref{lemma_good2}]
Note that by dyadic decomposition,
\begin{align*}
    |T_2&f(\xi)| \lesssim |x|^{-\epsilon} \int_{|x'|\le \kappa|x|} \sum_{j=0}^\infty \left(2^j |x|^{\beta+1} \right)^{-\frac{\mathcal{Q}-\epsilon}{\beta+1}} \int_{B_m\left(y,2^{j+1}|x|^{\beta+1}\right)\setminus B_m\left(y,2^{j}|x|^{\beta+1}\right)} |f(\eta)|d\eta\\
    &\lesssim |x|^{-\mathcal{Q}} \sum_{j=0}^\infty 2^{-j\frac{\mathcal{Q}-\epsilon}{\beta+1}} \int_{B_n(0,\kappa|x|)}  \left(\int_{B_m\left(y,2^{j+1}|x|^{\beta+1}\right)} |f(x',y')|^p dy'\right)^{\frac{1}{p}} \left(2^j |x|^{\beta+1}\right)^{\frac{m}{p'}} dx'\\
    &\lesssim |x|^{-\mathcal{Q}} \sum_{j=0}^\infty 2^{-j\frac{\mathcal{Q}-\epsilon}{\beta+1}} 2^{j\frac{m}{p'}} |x|^{\frac{m(\beta+1)}{p'}} |x|^{\frac{n}{p'}} \left(\int_{B_n(0,\kappa|x|)\times B_m\left(y,\kappa^{\beta+1}2^{j+1}|x|^{\beta+1}\right)} |f(\eta)|^p  d\eta \right)^{\frac{1}{p}}\\
    &\lesssim |x|^{-\frac{\mathcal{Q}}{p}} \sum_{j=0}^\infty 2^{-j\left(\frac{\mathcal{Q}-\epsilon}{\beta+1} - \frac{m}{p'}\right)} \left(\frac{1}{V\left((0,y), \kappa 2^{\frac{j+1}{\beta+1}}|x|\right)}\int_{B\left((0,y), \kappa 2^{\frac{j+1}{\beta+1}}|x|\right)} |f(\eta)|^p d\eta\right)^{\frac{1}{p}} \left(2^{\frac{j+1}{\beta+1}}|x|\right)^{\frac{\mathcal{Q}}{p}}\\
    &\lesssim \mathcal{M}\left(|f|^p\right)(\xi)^{\frac{1}{p}} \sum_{j=0}^\infty 2^{-j\left(\frac{\mathcal{Q}-\epsilon}{\beta+1} - \frac{m}{p'}-\frac{\mathcal{Q}}{p(\beta+1)}\right)}  \\
    &\lesssim \mathcal{M}\left(|f|^p\right)(\xi)^{\frac{1}{p}},
\end{align*}
provided $p>n'$ if $\epsilon=1$ and $p>1$ if $\epsilon=0$.

To this end, for any $n'<p<q<\infty$, we easily deduce by maximal theorem that
\begin{align*}
    \|T_2f\|_q^q \lesssim \left\|\mathcal{M}\left(|f|^p\right)\right\|_{q/p}^{q/p} \lesssim \|f\|_q^q 
\end{align*}
as desired.

\end{proof}

Proposition~\ref{prop_R1} is then followed by Lemma~\ref{lemma_R1}, Lemma~\ref{lemma_good2}.

\subsection{Estimates for the diagonal part}
Next, we consider $\mathcal{R}_2$, where
\begin{align*}
    \mathcal{R}_2(\xi,\eta) = \int_0^\infty \nabla_L e^{-tL}(\xi,\eta) \chi_{\{|x-x'|\le \kappa^{-1} |x|\}} \frac{dt}{\sqrt{\pi t}}.
\end{align*}

In this subsection, we prove the following result.

\begin{proposition}\label{prop_R2}
Under the assumptions of Theorem~\ref{thm_RR}, $\mathcal{R}_2$ is bounded on $L^p$ for all $2<p<\infty$ in the sense:
\begin{align*}
    \|\mathcal{R}_2f\|_p \le C \|f\|_p, \quad 2<p<\infty
\end{align*}
for all $f\in C_c^\infty(\mathbb{R}^n \setminus \{0\} \times \mathbb{R}^m)$.
\end{proposition}

Before presenting the proof, we recall the following lemma from \cite[Section~4.3]{Gilles}; see also \cite[Section~2.1]{DR_hardy}. Since some variational properties are needed, we give a proof for the sake of completeness.

\begin{lemma}\label{lemma_cover}
Let $M$ be a complete Riemannian manifold satisfying \eqref{Doubling}. Let $o\in M$ be a reference point. Suppose $R>0$. Then, there exists a sequence of balls $\{B_\alpha = B(x_\alpha, r_\alpha)\}_{\alpha\ge 0}$ such that 

$(1)$ $M = \cup_{\alpha\ge 0} B_\alpha$, where $B_0 = B(o,R)$ and $B_\alpha$ ($\alpha\ge 1$) is remote,


$(2)$ $   \sum_\alpha \chi_{B_\alpha}(x) \le c_0$ for all $x\in M$, where $c_0>0$ does not depend on $R$,

$(3)$ for all $\alpha \ne 0$, $2^{-10}r(x_\alpha)\le r_\alpha \le 2^{-9} r(x_\alpha)$.

\end{lemma}

\begin{proof}[Proof of Lemma~\ref{lemma_cover}]
Set $B_0 = B(o,R)$ and $A_N:= B(o, R 2^{N}) \setminus B(o, R 2^{N-1})$ for each $N\ge 1$. Apparently, 
\begin{align*}
    M = B_0 \cup \bigcup_{N\ge 1}A_N,\quad A_N \subset \bigcup_{x\in A_N} B(x, R 2^{N-13}).
\end{align*}
By \cite[Theorem~1.2]{Juha}, we find an 
index set $I_N$ and a collection of balls $\{B(x_{N,j},R2^{N-13})\}_{j\in I_N}$ with $x_{N,j}\in A_N$, pairwise disjoint and $A_N \subset \bigcup_{j\in I_N}B(x_{N,j}, R2^{N-10})$. By \eqref{Doubling}, one deduces the finiteness of $\# I_N$:
\begin{align*}
    \# I_N V(o,R 2^{N+2}) \le \sum_{j \in I_N} V(x_{N,j}, R2^{N+3}) \lesssim \sum_{j \in I_N} V(x_{N,j}, R2^{N-13}) \le V(o,R2^{N+2}),
\end{align*}
where the implicit constant does not depend on $N$. Note that by setting $B_{\alpha} = B(x_{\alpha}, r_\alpha)$ with $x_\alpha = x_{N,j}$, $r_\alpha = R 2^{N-10}$ and relabeling, we construct a sequence of balls $\{B_\alpha\}_{\alpha \ge 0}$ with $\cup_{\alpha \ge 0}B_\alpha = M$. Moreover, we have for $\alpha \ne 0$ (since $x_\alpha \in A_N$),
\begin{align*}
2^{-10}r(x_\alpha)\le r_\alpha \le 2^{-9} r(x_\alpha),
\end{align*}
and for all $x\in B_\alpha$,
\begin{align*}
    2^8 r_\alpha \le r(x) \le 2^{11}r_\alpha.
\end{align*}
This proves $(1)$ and $(3)$.

Note that this construction guarantees that $B_0 = B(o,R)$ is the only ball in the collection which contains $o$. In addition, for $x\in M \setminus \{o\}$, one sets $J_x = \{\alpha; x\in B_\alpha\}$. Observe that if $x\in A_N$ for some $N\ge 1$ (the case $x\in B_0$ is similar), and $x\in B_\alpha$ for some $\alpha$, then $\alpha$ is either in $I_N$, $I_{N-1}$ or $I_{N+1}$. Therefore, by \eqref{Doubling}, we have
\begin{align*}
    \# J_x V(x,2^{-1}r(x)) &\le \sum_{\alpha \in J_x} V(x_\alpha, 2^{-1}r(x)+2^{-8}r(x)) \lesssim \sum_{\alpha\in J_x} V(x_\alpha, 2^{-20}r(x))\\
    &\le \sum_{\alpha\in J_x} V(x_\alpha, 2^{-3}r_\alpha) = \sum_{l\in \{-1,0,1\}} \sum_{\alpha \in J_x \cap I_{N+l}} V(x_\alpha, 2^{-3}r_\alpha) \\
    &= \sum_{l\in \{-1,0,1\}} \textrm{vol}\left(\cup_{\alpha \in J_x\cap I_{N+l}}B(x_\alpha, 2^{-3} r_\alpha)\right)\\
    &\le 3V(x, 2^{-1}r(x)).
\end{align*}
since for all $\alpha \in J_x\cap I_{N+l}$, $B(x_\alpha, r_\alpha/8) \subset B(x,9r_\alpha/8) \subset B(x, \frac{9 }{8}2^{-8} r(x)) \subset B(x, 2^{-1}r(x))$. The case $x\in B_0$ is similar. Hence, we conclude that there exists a constant $c_0>0$ which does not depend on $R,N$ such that
\begin{align*}
    \sum_\alpha \chi_{B_\alpha}(x) \le c_0,\quad \forall x\in M.
\end{align*}
This completes the proof of $(2)$ and hence the Lemma~\ref{lemma_cover}.

\end{proof}

Recall that by our assumption, $f\in C_c^\infty(\mathbb{R}^n \setminus \{0\} \times \mathbb{R}^m)$. For a moment, let us further assume that there exists a $\delta>0$ such that $\textrm{supp}(f) \subset B_n(0,\delta)^c \times \mathbb{R}^m$. By choosing $M=\mathbb{R}^n$ and $R=\delta$ in Lemma~\ref{lemma_cover}, we may decompose $\mathbb{R}^n = B_n^0 \cup \left(\cup_{\alpha \ge 1}B_n^\alpha\right)$, where $B_n^0 = B_n(0,\delta)$ and $B_n^\alpha = B_n(x_\alpha, r_\alpha)$ is remote for $\alpha \ge 1$. Let $\{\mathcal{X}_\alpha\}$ be a smooth partition of unity subordinate to this covering. In particular, $\textrm{supp}\left(\mathcal{X}_\alpha\right) \subset B_n^\alpha$. Therefore, by choosing $\kappa $ (not depending on $\delta$) large enough, the support property of $\mathcal{R}_2$ implies
\begin{align*}
    |\mathcal{R}_2f(\xi)| \le \sum_{\alpha \ge 0} \left|\chi_{4B_n^\alpha}(x) \mathcal{R}(f\mathcal{X}_{\alpha})(\xi)\right|.
\end{align*}
Set $\mathcal{R}_\alpha := \chi_{4B_n^\alpha} \mathcal{R} \mathcal{X}_{\alpha}$. Note that by the support property of $f$, the first term in the above sum vanishes, i.e. $\mathcal{R}_0 f = 0$. By finite overlap, it suffices to show
\begin{align*}
    \sup_{\alpha \ge 1} \|\mathcal{R}_\alpha\|_{p\to p} \lesssim 1,\quad \forall 2<p<\infty,
\end{align*}
where the implicit constant does not depend on $\delta$. Finally, we conclude the proof by letting $\delta \to 0$.

For each $\alpha \ge1$, we further split
\begin{align*}
    \mathcal{R}_\alpha f(\xi) = \chi_{4B_n^\alpha}(x) &\int_0^{r_\alpha^2} \nabla_L e^{-tL}(f\mathcal{X}_{\alpha})(\xi) \frac{dt}{\sqrt{t}} \\
    &+ \chi_{4B_n^\alpha}(x) \int_{r_\alpha^2}^\infty \nabla_L e^{-tL}(f\mathcal{X}_{\alpha})(\xi) \frac{dt}{\sqrt{t}}:= \mathcal{R}_{\alpha,1}f + \mathcal{R}_{\alpha,2}f.
\end{align*}

\begin{lemma}\label{lemma_R2_highenergy}
$\sup_{\alpha\ge 1} \|\mathcal{R}_{\alpha,2}\|_{p\to p} \lesssim 1$ for all $\frac{n}{n-1}<p<\infty$.
\end{lemma}

\begin{proof}[Proof of Lemma~\ref{lemma_R2_highenergy}]
Note that for $x\in 4B_n^\alpha$ and $x'\in B_n^\alpha$, we have $|x|,|x'| \sim r_\alpha$. Hence, by the gradient estimates,
\begin{align*}
    |\nabla_L e^{-tL}(\xi,\eta)|\lesssim \left(\frac{1}{\sqrt{t}} + \frac{1}{r_\alpha}\right) \frac{1}{V(\xi,\sqrt{t})} e^{-c\frac{\sigma^2}{t}}, 
\end{align*}
where 
\begin{align}\label{eq_sigma}
    \sigma = \frac{|y-y'|}{r_\alpha^{\beta} + |y-y'|^{\frac{\beta}{\beta+1}}}.
\end{align}
Therefore, a straightforward calculation yields
\begin{align*}
    |\mathcal{R}_{\alpha,2}(\xi,\eta)|\lesssim r_\alpha^{-1-\epsilon} \sigma^{-\mathcal{Q}+1+\epsilon} \chi_{4B_n^\alpha}(x) \chi_{B_n^\alpha}(x')
\end{align*}
for any $0\le \epsilon < \mathcal{Q}-1$.


Observe that
\begin{align*}
    \sigma \sim \begin{cases}
        r_\alpha^{-\beta} |y-y'|, & |y-y'|\le r_\alpha^{\beta+1},\\
        |y-y'|^{\frac{1}{\beta+1}}, & |y-y'|\ge r_\alpha^{\beta+1}.
    \end{cases}
\end{align*}
We obtain upper bound:
\begin{align*}
    |\mathcal{R}_{\alpha,2}f(\xi)| \lesssim \chi_{4B_n^\alpha}(x) r_\alpha^{-1-\epsilon} \int_{B_n^\alpha} \int_{B_m \left(y,r_\alpha^{\beta+1}\right)} \left(r_\alpha^{-\beta}|y-y'|\right)^{-\mathcal{Q}+1+\epsilon} |f(x',y')| dy' dx'\\
    + \chi_{4B_n^\alpha}(x) r_\alpha^{-1} \int_{B_n^\alpha} \int_{B_m \left(y,r_\alpha^{\beta+1}\right)^c} |y-y'|^{-\frac{\mathcal{Q}-1}{\beta+1}} |f(x',y')| dy' dx'\\
    := If(\xi) + IIf(\xi).
\end{align*}
Note that by Lemma~\ref{lemma_good2}, replacing $|x|$ by $r_\alpha$, and the fact $B_n^\alpha \subset B_n(0,cr_\alpha)$ for some $c>0$, we have $|II f(\xi)|\lesssim \mathcal{M}(|f|^p)(\xi)^{\frac{1}{p}}$. Moreover, $\|II\|_{q\to q}\lesssim 1$ for all $q>n'$ and all $\alpha > 0$. Next, we treat $I$. By Hölder's inequality, for any $p>1$,
\begin{align*}
    |If(\xi)|&\le r_\alpha^{-1-\epsilon} \left(\int_{B_n(0,c r_\alpha) \times B_m \left(y,cr_\alpha^{\beta+1}\right)} |f(\eta)|^p d\eta \right)^{\frac{1}{p}} \left(  \int_{B_n^\alpha} \int_{B_m \left(y,r_\alpha^{\beta+1}\right)} \frac{ r_\alpha^{p' \beta(\mathcal{Q}-1-\epsilon)} dy' dx'}{|y-y'|^{p'(\mathcal{Q}-1-\epsilon)}} \right)^{\frac{1}{p'}}\\
    &\lesssim r_\alpha^{-1-\epsilon + \frac{\mathcal{Q}}{p}} \mathcal{M}(|f|^p)(\xi)^{\frac{1}{p}} r_\alpha^{\frac{n}{p'}+\beta(\mathcal{Q}-1-\epsilon)} \left( \int_0^{r_\alpha^{\beta+1}} s^{m-p'(\mathcal{Q}-1-\epsilon)-1} ds \right)^{\frac{1}{p'}}\lesssim \mathcal{M}(|f|^p)(\xi)^{\frac{1}{p}},
\end{align*}
where we choose $\mathcal{Q}-1-\frac{m}{p'} < \epsilon < \mathcal{Q}-1$. By maximal theorem, $\sup_\alpha \|I\|_{q\to q}\lesssim 1$ for all $1<q<\infty$. This completes the proof of Lemma~\ref{lemma_R2_highenergy}. 
\end{proof}

Next, we consider $\mathcal{R}_{\alpha,1}$. We further split
\begin{align*}
    \mathcal{R}_{\alpha,1}(\xi,\eta) = \mathcal{R}_{\alpha,1}(\xi,\eta)\chi_{\{|y-y'|\le r_\alpha^{\beta+1}\}} + \mathcal{R}_{\alpha,1}(\xi,\eta)\chi_{\{|y-y'|\ge r_\alpha^{\beta+1}\}}:= \mathcal{R}_{\alpha,1}^1 + \mathcal{R}_{\alpha,1}^2
\end{align*}
for all $\alpha \ge 1$.

\begin{lemma}\label{lemma_R2_lowenergy_error}
$\sup_{\alpha\ge 1} \|\mathcal{R}_{\alpha,1}^2\|_{p\to p} \lesssim 1$ for all $1<p<\infty$.
\end{lemma}

\begin{proof}[Proof of Lemma~\ref{lemma_R2_lowenergy_error}]
Note that for $x\in 4B_n^\alpha$, $x' \in B_n^\alpha$, $t\le r_\alpha^2$ 

\begin{align*}
    |\mathcal{R}_{\alpha,1}^2(\xi,\eta)| \lesssim \int_0^{r_\alpha^2} \frac{1}{\sqrt{t}V(\xi,\sqrt{t})} e^{-c\frac{\sigma^2}{t}} \frac{dt}{\sqrt{t}},
\end{align*}
where $\sigma$ is given by \eqref{eq_sigma}. By geometry, $V(\xi,\sqrt{t}) \sim |x|^{m\beta}t^{\frac{n+m}{2}} \sim r_\alpha^{m\beta} t^{\frac{n+m}{2}}$. In addition, since $|y-y'|\ge r_\alpha^{\beta+1}$, we have $\sigma \sim |y-y'|^{\frac{1}{\beta+1}}$. Therefore, the kernel is bounded by
\begin{align*}
    \chi_{4B_n^\alpha}(x) \chi_{B_n^\alpha}(x') r_\alpha^{-m\beta}& \chi_{|y-y'|\ge r_\alpha^{\beta+1}}(y') |y-y'|^{-\frac{n+m}{\beta+1}} e^{-c\frac{\sigma^2}{r_\alpha^2}} \\&\lesssim\chi_{4B_n^\alpha}(x) \chi_{B_n^\alpha}(x')\chi_{|y-y'|\ge r_\alpha^{\beta+1}}(y') |y-y'|^{-\frac{\mathcal{Q}}{\beta+1}}.
\end{align*}
Thus,
\begin{align*}
    |\mathcal{R}_{\alpha,1}^2f(\xi)|\lesssim \chi_{4B_n^\alpha}(x) \int_{B_n(0,cr_\alpha)} \int_{B_m\left(y,r_\alpha^{\beta+1}\right)^c} |y-y'|^{-\frac{\mathcal{Q}}{\beta+1}} |f(\eta)| d\eta.
\end{align*}
By Lemma~\ref{lemma_good2}, replacing $|x|$ by $r_\alpha$ and $\epsilon =0$, we get
\begin{align*}
    \sup_{\alpha \ge 1} \|\mathcal{R}_{\alpha,1}^2\|_{p\to p} \lesssim 1
\end{align*}
for all $1<p<\infty$. 

\end{proof}

Next, we treat $\mathcal{R}_{\alpha,1}^1$. Recall the Hardy-Littlewood maximal operator relative to some measurable subset $E$ of $\mathbb{R}^{n+m}$. For $\xi\in \mathbb{R}^{n+m}$, and $f\in L_{\textrm{loc}}^1$,
\begin{align*}
    \mathcal{M}_Ef(\xi) = \sup_{B \ni \xi} \frac{1}{|B \cap E|} \int_{B \cap E} |f(\eta)| d\eta.
\end{align*}
Since the Grushin space $(\mathbb{R}^{n+m},g)$ satisfies \eqref{Doubling}, the sublinear operator $\mathcal{M}_E$ is of weak type $(1,1)$ and bounded on $L^p$ for all $1<p\le \infty$ with operator norm not depending on $|E|$.

\begin{theorem}\cite[Theorem~2.4]{ACDH}\label{thm_ACDH}
Let $(M,d,\mu)$ be a metric space satisfying doubling condition. Suppose that $T$ is a bounded sublinear operator which is bounded on $L^2(M,\mu)$, and let $A_r$, $r>0$, be a family of linear operators acting on $L^2(M,\mu)$. Let $p_0\in (2,\infty]$. Let $E_1,E_2$ be two subsets of $M$ such that $E_1\subset E_2$, $\mu(E_2)<\infty$.  Assume
\begin{align}\label{thm_ACDH1}
    \mathcal{M}^{\#}_{E_2}f(x)^2 := \sup_{B\ni x}\frac{1}{\mu(B\cap E_2)} \int_{B\cap E_2} |T(I-A_{r})f|^2,
\end{align}
is bounded from $L^p(E_1) \to L^p(E_2)$ for all $p\in (2,p_0)$, and for some sublinear operator $S$ bounded from $L^p(E_1) \to L^p(E_2)$ for all $p\in (2,p_0)$,
\begin{align}\label{thm_ACDH2}
    \left(\frac{1}{\mu(B\cap E_2)}\int_{B\cap E_2} |TA_{r}f|^{p_0} d\mu\right)^{\frac{1}{p_0}} \le C_2 \mathcal{M}_{E_2}(|Tf|^2)^{\frac{1}{2}}(x) + C_2 Sf(x),
\end{align}
for all $f\in L^2$ supported in $E_1$, all balls $B\subset M$ and all $x\in B\cap E_2$, where $r=r(B)$ is the radius of $B$. If $2<p<p_0$ and $Tf\in L^p(E_2)$ whenever $f\in L^p(E_1)$, then $T$ is bounded from $L^p(E_1)$ to $L^p(E_2)$ and its operator norm is bounded by a constant depending only on the operator norm of $T$ on $L^2$, the $L^p(E_1)\to L^p(E_2)$ operator norms of $\mathcal{M}_{E_2}^{\#}$ and $S$, the doubling constant from \eqref{Doubling}, $p,p_0$, and $C_1,C_2$. 
\end{theorem}

\begin{remark}
In the above theorem, for the case $p_0=\infty$, the assumption \eqref{thm_ACDH2} should be understood as
\begin{align*}
    \sup_{y\in B\cap E_2} |TA_rf(y)| \le C_2 \mathcal{M}_{E_2}(|Tf|^2)^{\frac{1}{2}}(x) + C_2 Sf(x)
\end{align*}
for all $x\in B \cap E_2$.

We also mention that although the assumption $\mu(E_2)<\infty$ plays a crucial role in the proof; see \cite[Lemma~2.5, 2.6]{ACDH}, the operator norm $\|T\|_{L^p(E_1)\to L^p(E_2)}$ does not depend on the size of $E_2$, i.e. $\mu(E_2)$.
\end{remark}

\begin{proposition}\label{lemma_R2_lowenergy_main}
$\sup_{\alpha\ge 1} \|\mathcal{R}_{\alpha,1}^1\|_{p\to p} \lesssim 1$ for all $2<p<\infty$.
\end{proposition}

\begin{proof}[Proof of Lemma~\ref{lemma_R2_lowenergy_main}]

We want to apply Theorem~\ref{thm_ACDH}. To to so, we need to further decompose the $y$-space in a proper way. For each $\alpha \ge 1$, let $\{B_m^j = B_m\left(y_j, r_\alpha^{\beta+1}\right)\}_{j\in J}$ be a maximal $r_\alpha^{\beta+1}$-separated subset of $\mathbb{R}^m$ such that
\begin{align*}
    &(1) \mathbb{R}^m = \cup_j B_m^j,\quad (2) B_m \left(y_j,\frac{r_\alpha^{\beta+1}}{2} \right) \cap B_m\left(y_i, \frac{r_\alpha^{\beta+1}}{2}\right) = \emptyset,\\
    &(3) \sum_j \chi_{B_m^j}(y) \le C,\quad \forall y\in \mathbb{R}^m
\end{align*}
for some $C>0$ not depending on $r_\alpha$ (again, a consequence of volume doubling property); see for example \cite{Coulhon-Saloff} or \cite{CD2}. Let $\{\mathcal{Y}_j\}$ be a smooth partition of unity subordinate to this covering. To simplify the notations, we use $\mathcal{X}_{\alpha,j}(\xi)$ to denote the cut-off function $\mathcal{X}_\alpha(x) \mathcal{Y}_j(y)$.

Then, by the support of the kernel of $\mathcal{R}_{\alpha,1}^1$, one deduces
\begin{align*}
    \left|\mathcal{R}_{\alpha,1}^1f\right| \le \sum_j \left|\chi_{4B_n^\alpha \times 4B_m^j} \mathcal{R}_{\alpha,1} \left(\mathcal{X}_{\alpha,j}f\right)\right|.
\end{align*}
By finite overlap property, it suffices to show that 
\begin{align*}
\sup_{\alpha,j}\|\mathcal{R}_{\alpha,1}\|_{L^p\left(B_n^\alpha \times B_m^j\right)\to L^p\left(4B_n^\alpha \times 4B_m^j\right)}\lesssim 1.    
\end{align*}
Next, we set $A_{r} = e^{-r^2 L}$, $p_0=\infty$, $E_1 = B_n^\alpha \times B_m^j$, $E_2 = 4B_n^\alpha \times 4B_m^j$, and
\begin{align*}
    Tf(\xi) = \left|\int_0^{r_\alpha^2} \nabla_L e^{-tL}f(\xi) \frac{dt}{\sqrt{t}}\right|.
\end{align*}
We show that \eqref{thm_ACDH1}, \eqref{thm_ACDH2} hold for the above setting and the method is to employ the argument from \cite[Section~3.2, 4]{ACDH}; see also \cite[Lemma~4.3]{Gilles}.

Before doing that, we need to verify the $L^2$-boundedness of $\mathcal{R}_{\alpha,1}$. Indeed, by Lemma~\ref{lemma_R2_highenergy}, and the $L^2$-boundedness of the Riesz transform:
\begin{align*}
    \|\mathcal{R}_{\alpha,1}\|_{2\to 2} \le \|\mathcal{R}_{\alpha,2}\|_{2\to 2} + \|\mathcal{R}_\alpha\|_{2\to 2} \lesssim 1 + \|\mathcal{R}\|_{2\to 2}.
\end{align*}

From now on, let $B \subset \mathbb{R}^{n+m}$ be a geodesic ball with radius $r$, and $B\cap E_2 \ni \xi = (x,y)$. Suppose $f$ is a locally integrable function supported in $E_1$.

\begin{proof}[Proof of \eqref{thm_ACDH1}]
Write $f = f\chi_{2B} + f(1-\chi_{2B}):= f_1 + f_2$. By the $L^2$-boundedness of $T(I-A_r)$, we have by doubling condition
\begin{align*}
    \int_{B\cap E_2} |T(I-A_r)f_1|^2 \le \int_{2B \cap E_1} |f|^2 \le |2B \cap E_2| \mathcal{M}_{E_2}(|f|^2)(\xi) \le C |B\cap E_2| \mathcal{M}_{E_2}(|f|^2)(\xi).
\end{align*}
Next, we write
\begin{align}\label{eq1}
|T(I-A_r)f_2(z)| \le \int_{\mathbb{R}^{n+m}} |K_r(z,\eta)| |f_2(\eta)| d\eta,
\end{align}
where
\begin{align*}
    K_r(z,\eta) = \int_0^\infty |g_r(t)| |\chi_{E_2}(z) \nabla_L e^{-tL}(z,\eta) \chi_{E_1}(\eta)| dt,
\end{align*}
and
\begin{align*}
    g_r(t)=\frac{\chi_{[0,r_\alpha^2](t)}}{\sqrt{t}} - 
\frac{\chi_{[r^2,r_\alpha^2+r^2]}(t)}{\sqrt{t-r^2}}.
\end{align*}
Therefore, for $z \in E_2\cap B$ and $\eta \in E_1$,
\begin{align}\label{eq2}
    |K_r(z,\eta)|&\lesssim \int_0^{r_\alpha^2} \frac{|g_r(t)|}{\sqrt{t} V(z,\sqrt{t})}e^{-c\frac{d(z,\eta)^2}{t}} dt + \int_{r_\alpha^2}^{r^2+r_\alpha^2} \frac{|g_r(t)|}{r_\alpha V(z,\sqrt{t})} e^{-c\frac{d(z,\eta)^2}{t}} dt.  
\end{align}
From \cite[Lemma~2.1]{CD2} (see also \cite[Lemma~4.4]{Gilles}), we know that for any locally integrable function $v$, 
\begin{align}\label{eq3}
    \int_{d(z,\eta)\ge r} e^{-\frac{d(z,\eta)^2}{ct}} |v(\eta)| d\eta \lesssim V(z,\sqrt{t}) e^{-\frac{r^2}{ct}} \inf_{a\in B(z,r)}\mathcal{M}v(a).
\end{align}
Then, plug \eqref{eq2} into \eqref{eq1} and use \eqref{eq3} to deduce (recall that $\textrm{supp}(f_2) \subset (2B)^c$)
\begin{align*}
|T(I-A_r)f_2(z)| \lesssim \mathcal{M}f(\xi) \left(\int_0^{r_\alpha^2} |g_r(t)| e^{-\frac{r^2}{ct}} \frac{dt}{\sqrt{t}} + \int_{r_\alpha^2}^{r_\alpha^2+r^2} |g_r(t)| e^{-\frac{r^2}{ct}} \frac{dt}{r_\alpha} \right):= \mathcal{M}f(\xi) (I+II).
\end{align*}
We only need to verify that both $I$ and $II$ are uniformly bounded with respect to $\alpha,j,r$. 

\textit{Case 1. $r\le r_\alpha$.} Further split $I = \int_0^{r^2} + \int_{r^2}^{r_\alpha^2}$. For the first integral, we estimate $|g_r(t)|$ by $t^{-1/2}$. Following by a straightforward calculation, one yields that it is bounded by the finite integral: $\int_1^\infty e^{-s} ds/s$. As for the second integral, we bound $|g_r(t)|$ by $t^{-1/2} - (t-r^2)^{-1/2}$. Then, by a change of variables, one gets the uniform upper bound:
\begin{align*}
    \int_0^1 e^{-s} \left| 1 - \sqrt{\frac{1}{1-s}} \right| \frac{ds}{s}<\infty,
\end{align*}
which confirms that $|I|\lesssim 1$.

While for $II$, we have
\begin{align*}
    II \lesssim r_\alpha^{-1} \int_{r_\alpha^2}^{r_\alpha^2+r^2} e^{-\frac{r^2}{ct}} \frac{dt}{\sqrt{t-r^2}}\le r_\alpha^{-1} \int_0^{r_\alpha^2} \frac{ds}{\sqrt{s}} \lesssim 1.
\end{align*}

\textit{Case 2. $r\ge r_\alpha$.} Note that in this case, $ I\le \int_0^{r^2} e^{-\frac{r^2}{ct}} \frac{dt}{t} \lesssim 1$. Next, we split $II = \int_{r_\alpha^2}^{r^2} + \int_{r^2}^{r^2+r_\alpha^2}$. The first integral vanishes and the second one can be bounded by
\begin{align*}
    r_\alpha^{-1} \int_{r^2}^{r^2+r_\alpha^2} \frac{dt}{\sqrt{t-r^2}} \lesssim 1
\end{align*}
as desired.
\end{proof}

To complete the proof of Proposition~\ref{prop_R2}, we also need to prove \eqref{thm_ACDH2}.

\begin{proof}[Proof of \eqref{thm_ACDH2}]
By \cite[Theorem~1.1]{DS2}, the Grushin space satisfies Poincaré inequality ($\textrm{P}_2$) and hence a result of \cite{HK} (also see \cite[Section~3.2]{ACDH}) guarantees the following pointwise estimate:
\begin{align*}
|u(\xi) - u(\eta)| \le C d(\xi,\eta) \left(\mathcal{M}(|\nabla_L u|^2)^{\frac{1}{2}}(\xi) + \mathcal{M}(|\nabla_L u|^2)^{\frac{1}{2}}(\eta) \right).
\end{align*}
Set $h = \int_0^{r_\alpha^2} e^{-tL}f \frac{dt}{\sqrt{t}}$. Note that $TA_rf = \nabla_L e^{-r^2L}h$. Let $z,\xi\in B\cap E_2$, we have by stochastic completeness (see \cite[Theorem~6.1]{DS1})
\begin{align*}
    &\left|\nabla_L e^{-r^2L}h(z)\right| = \left|\nabla_L e^{-r^2L}\left(h(z) - h(\xi)\right)\right|  \le \int_{\mathbb{R}^{n+m}} \left|\nabla_L e^{-r^2L}(z,\eta)\right| |h(\eta) - h(\xi)| d\eta\\
    &\le \frac{C}{V(z,r)} \int_{\mathbb{R}^{n+m}} \left(\frac{1}{r} + \frac{1}{r_\alpha}\right) \textrm{exp}\left(-\frac{d(z,\eta)^2}{cr^2}\right) d(\xi,\eta) \left(\mathcal{M}(|\nabla_L h|^2)^{\frac{1}{2}}(\xi) + \mathcal{M}(|\nabla_L h|^2)^{\frac{1}{2}}(\eta) \right)d\eta.
\end{align*}

\textit{Case 1. $r\le r_\alpha$.} In this case, the above is bounded by
\begin{align*}
    \frac{1}{V(z,r)} \int_{\mathbb{R}^{n+m}} \frac{d(\xi,\eta)}{r} \textrm{exp}\left(-\frac{d(z,\eta)^2}{cr^2}\right) \left(\mathcal{M}(|\nabla_L h|^2)^{\frac{1}{2}}(\xi) + \mathcal{M}(|\nabla_L h|^2)^{\frac{1}{2}}(\eta) \right)d\eta.
\end{align*}
Now, since $\xi,z\in E_2\cap B$ and $\eta \in E_1$, it is easy to see that $d(\xi,\eta)\le d(\eta,z)+2r$. Hence, the term $\frac{d(\xi,\eta)}{r}$ can be absorbed into the exponential, i.e. the above can be estimated by
\begin{align*}
    \frac{1}{V(z,r)} \int_{\mathbb{R}^{n+m}} \textrm{exp}\left(-\frac{d(z,\eta)^2}{c' r^2}\right) &\mathcal{M}(|\nabla_L h|^2)^{\frac{1}{2}}(\eta) d\eta\\
    &+ \frac{\mathcal{M}(|\nabla_L h|^2)^{\frac{1}{2}}(\xi)}{V(z,r)} \int_{\mathbb{R}^{n+m}} \textrm{exp}\left(-\frac{d(z,\eta)^2}{c' r^2}\right) d\eta.
\end{align*}
By \cite[Lemma~2.1]{CD2}, the above is bounded by 
\begin{align*}
    \mathcal{M}\left(\mathcal{M}(|\nabla_L h|^2)^{\frac{1}{2}}\right)(\xi) + \mathcal{M}(|\nabla_L h|^2)^{\frac{1}{2}}(\xi) \le C \mathcal{M}(|\nabla_L h|^2)^{\frac{1}{2}}(\xi),
\end{align*}
where the last inequality follows from \cite{CR}; see also \cite[Section~3.2]{ACDH}. 

Let $\Tilde{B}\ni \xi$. Observe that $\nabla_L h = Tf$. The following estimate is immediate.
\begin{align}\label{eq5}
    \frac{1}{|\Tilde{B}|}\int_{\Tilde{B}\cap E_2} |\nabla_L h|^2 \le \frac{|\Tilde{B}\cap E_2|}{|\Tilde{B}|} \mathcal{M}_{E_2}(|Tf|^2)(\xi) \le \mathcal{M}_{E_2}(|Tf|^2)(\xi).
\end{align}
On the other hand, using the support property of $f$ (i.e. $\textrm{supp}(f)\subset E_1$) and the gradient estimate for the heat kernel, one obtains
\begin{align*}
    \int_{\Tilde{B}\setminus E_2}|\nabla_L h|^2 \le \int_{\Tilde{B}\setminus E_2} \left| \int_0^{r_\alpha^2} \int_{E_1} \frac{C}{ V(z,\sqrt{t})} \textrm{exp}\left(-\frac{d(z,\eta)^2}{ct}\right) |f(\eta)| d\eta \frac{dt}{t} 
 \right|^2 dz.
\end{align*}
Now, since $z\in E_2^c$ and $\eta \in E_1$, the inner integral is bounded by
\begin{align*}
    \frac{1}{V(z,\sqrt{t})}\int_{d(z,\eta)\ge 3r_\alpha} \textrm{exp}\left(-\frac{d(z,\eta)^2}{ct}\right) |f(\eta)| d\eta \le C e^{-\frac{r_\alpha^2}{ct}} \mathcal{M}f(z) \le C e^{-\frac{r_\alpha^2}{ct}} \mathcal{M}(|f|^2)^{\frac{1}{2}}(z).
\end{align*}
Consequently,
\begin{align}\label{eq4}
\int_{\Tilde{B}\setminus E_2}|\nabla_L h|^2 \le \int_{\Tilde{B}\setminus E_2} \mathcal{M}(|f|^2)(z) \left( \int_0^{r_\alpha^2} e^{-\frac{r_\alpha^2}{ct}} \frac{dt}{t}\right)^2 dz \le C |\Tilde{B}| \mathcal{M}\left(\mathcal{M}(|f|^2)\right)(\xi).
\end{align}
Combining \eqref{eq5} and \eqref{eq4}, we confirm for $r\le r_\alpha$, $z,\xi \in B\cap E_2$
\begin{align*}
    \left|TA_rf(z)\right| = \left|\nabla_L e^{-r^2L}h(z)\right|\lesssim  \mathcal{M}_{E_2}(|Tf|^2)^{\frac{1}{2}}(\xi) + \mathcal{M}\left(\mathcal{M}(|f|^2)\right)^{\frac{1}{2}}(\xi).
\end{align*}

\textit{Case 2. $r\ge r_\alpha$.} If this is the case, we directly bound $|\nabla_L e^{-r^2L}h(z)|$ by 
\begin{align*}
    r_\alpha^{-1} \int_{\mathbb{R}^{n+m}} \frac{1}{V(z,r)} \textrm{exp}\left(-\frac{d(z,\eta)^2}{r^2}\right) |h(\eta)| d\eta \le C r_\alpha^{-1} \mathcal{M}h(\xi).
\end{align*}

In summary, by setting $Sf = \mathcal{M}\left(\mathcal{M}(|f|^2)\right)^{\frac{1}{2}} +  r_\alpha^{-1} \mathcal{M}h$, we have verified that for any $\xi \in B\cap E_2$,
\begin{align*}
    \sup_{z\in B\cap E_2}\left|TA_rf(z)\right| \le C \left(\mathcal{M}_{E_2}(|Tf|^2)^{\frac{1}{2}}(\xi) + Sf(\xi)\right).
\end{align*}
The result follows by noting that for all $2<p<\infty$,
\begin{align*}
    \|Sf\|_p \lesssim \| \mathcal{M}\left(\mathcal{M}(|f|^2)\right)^{\frac{1}{2}}\|_p +  r_\alpha^{-1} \|\mathcal{M}h\|_p \lesssim \|f\|_p,
\end{align*}
where the last inequality follows by maximal theorem and $\|h\|_p \lesssim r_{\alpha}\|f\|_p$ (contractivity of the heat kernel).

This completes the proof of \eqref{thm_ACDH2}.
   
\end{proof}

By Theorem~\ref{thm_ACDH}, we have proved Proposition~\ref{lemma_R2_lowenergy_main}. 

\end{proof}

Proposition~\ref{prop_R2} follows by Lemma~\ref{lemma_R2_highenergy}, \ref{lemma_R2_lowenergy_error}, Proposition~\ref{lemma_R2_lowenergy_main} and letting $\delta \to 0$; see the remarks given above Lemma~\ref{lemma_R2_highenergy}

\subsection{Harmonic annihilation}

In this subsection, we prove Theorem~\ref{thm_RR}. By \cite[Theorem~8.1]{DS1} and duality (\cite[Proposition~2.1]{CD}), it is enough to prove \eqref{eq_RRp} for $1<p<2$. Let $\phi_\epsilon$ $(\epsilon>0)$ be a smooth function defined on $\mathbb{R}^n$ such that

$\bullet$ $\textrm{supp}(\phi_\epsilon) \subset B_n(0,4\epsilon/3)$,

$\bullet$ $\phi_\epsilon=1$ on $B_n(0,\epsilon)$,

$\bullet$ $\|\phi_\epsilon\|_\infty + \epsilon\|\nabla \phi_\epsilon\|_\infty \lesssim 1$.

Let $\kappa \ge 4$ be the parameter introduced as before. Then $\phi_{\kappa^{-1} |x'|}(x)$ is a smooth function supported in the regime: $\{(x,x')\in \mathbb{R}^n \times \mathbb{R}^n;4|x'|\ge 3\kappa |x|\}$. Next, we decompose the Riesz kernel as follows
\begin{align}\label{eq_Grushin_1}
    \mathcal{R}(\xi,\eta) = \int_0^\infty \phi_{\kappa^{-1}|x'|}(x)& \nabla_L e^{-tL}(\xi,\eta) \frac{dt}{\sqrt{\pi t}}\\ 
    &+ \int_0^\infty \left[1 - \phi_{\kappa^{-1}|x'|}(x) \right] \nabla_L e^{-tL}(\xi,\eta) \frac{dt}{\sqrt{\pi t}}:= \mathcal{O}_1(\xi,\eta) + \mathcal{O}_2(\xi,\eta).
\end{align}
Note that $\mathcal{O}_2$ is supported in the range
\begin{align*}
    \{(\xi,\eta)\in \mathbb{R}^{n+m}\times \mathbb{R}^{n+m} \setminus \{\xi=\eta\}; |x'|\le \kappa |x|\} \subset \mathcal{D}_1 \cup \mathcal{D}_2.
\end{align*}
Hence, $\mathcal{O}_2(\xi,\eta)$ is bounded by some constant multiple of $|\mathcal{R}_1(\xi,\eta)| + |\mathcal{R}_2(\xi,\eta)|$. Next, let $1<p<2$ and $f\in C_c^\infty(\mathbb{R}^{n+m})$, $g\in C_c^\infty(\mathbb{R}^{n}\setminus \{0\} \times \mathbb{R}^m)$ with $\|g\|_{p'}=1$. Note that $C_c^\infty(\mathbb{R}^{n}\setminus \{0\} \times \mathbb{R}^m)$ is dense in $L^p(\mathbb{R}^{n+m})$. By duality, it is enough to show
\begin{align*}
    \left| \langle L^{1/2}f, g \rangle \right| \le C \|\nabla_L f\|_p \|g\|_{p'},\quad \forall 1<p<2
\end{align*}
for some constant $C>0$ not depending on $g$.

By resolution to identity and the positivity and self-adjointness of $L$, one deduces
\begin{align*}
    \langle L^{1/2}f, g \rangle = \left\langle \int_0^\infty L e^{-tL}f \frac{dt}{\sqrt{\pi t}}, g \right\rangle &= \left\langle \nabla_L f, \int_0^\infty \nabla_L e^{-tL}g \frac{dt}{\sqrt{\pi t}} \right\rangle\\
    &= \left\langle \nabla_L f, \mathcal{O}_1g \right\rangle + \left\langle \nabla_L f, \mathcal{O}_2 g \right\rangle.
\end{align*}
By Proposition~\ref{prop_R1} and Proposition~\ref{prop_R2}, we have $\|\mathcal{R}_1+\mathcal{R}_2\|_{q\to q}\lesssim 1$ for all $q\in (2,\infty)$. Hence,
\begin{align*}
|\left\langle \nabla_L f, \mathcal{O}_2 g \right\rangle| \le \|\nabla_L f\|_p \|\mathcal{O}_2g\|_{p'} \le C \|\nabla_L f\|_p \|\mathcal{R}_1g+\mathcal{R}_2g\|_{p'} \le C \|\nabla_L f\|_p \|g\|_{p'}.
\end{align*}
Define bilinear form
\begin{equation*}
    \mathcal{B}(f,g) := \int_{\mathbb{R}^{n+m}} \nabla_L f(\xi) \cdot \int_{\mathbb{R}^{n+m}} \int_0^\infty \phi_{\kappa^{-1}|x'|}(x) \nabla_L e^{-tL}(\xi,\eta) g(\eta)\frac{dt}{\sqrt{\pi t}} d\eta d\xi.
\end{equation*}
By integration by parts,
\begin{align*}
    \mathcal{B}(f,g) &= \int_{\mathbb{R}^{n+m}} f(\xi) \int_{\mathbb{R}^{n+m}} \int_0^\infty \phi_{\kappa^{-1}|x'|}(x) L_{\xi} e^{-tL}(\xi,\eta) g(\eta) \frac{dt}{\sqrt{\pi t}} d\eta d\xi\\
    &- \int_{\mathbb{R}^{n+m}} f(\xi) \int_{\mathbb{R}^{n+m}} \int_0^\infty (\nabla_L)_x [\phi_{\kappa^{-1}|x'|}(x)] \cdot \nabla_L e^{-tL}(\xi,\eta) g(\eta) \frac{dt}{\sqrt{\pi t}} d\eta d\xi.
\end{align*}

\begin{lemma}\label{lemma_key2}
Under the assumptions of Theorem~\ref{thm_RR}, the following estimates hold:
\begin{align*}
    \int_0^\infty &|(\nabla_L)_x [\phi_{\kappa^{-1}|x'|}(x)] \cdot \nabla_L e^{-tL}(\xi,\eta)| + |\phi_{\kappa^{-1}|x'|}(x) \partial_t e^{-tL}(\xi,\eta)| \frac{dt}{\sqrt{t}} \\
    &\lesssim \begin{cases}
        |x'|^{-\mathcal{Q}-1} \chi_{|x|\le c_1|x'|}, & |y-y'|\le |x'|^{\beta+1},\\
        |x'|^{-2} |y-y'|^{-\frac{\mathcal{Q}-1}{\beta+1}} \chi_{c_2|x|\le |x'| \le c_3|x|} + |y-y'|^{-\frac{\mathcal{Q}+1}{\beta+1}}\chi_{|x|\le c_4|x'|}, & |y-y'|\ge |x'|^{\beta+1}
    \end{cases}
\end{align*}
for some constants $c_1,c_2,c_3,c_4>0$.
\end{lemma}

\begin{proof}[Proof of Lemma \ref{lemma_key2}]
Note that for $|x'|\ge \kappa |x|$, $|x-x'|\sim |x'|$. Hence
\begin{align*}
    d(\xi,\eta) \sim |x-x'| + \frac{|y-y'|}{(|x|+|x'|)^\beta + |y-y'|^{\frac{\beta}{\beta+1}}} \sim |x'| + \frac{|y-y'|}{|x'|^\beta + |y-y'|^{\frac{\beta}{\beta+1}}}:= \sigma,
\end{align*}
and
\begin{align}\label{eq_delta}
    \sigma \sim \begin{cases}
        |x'|, & |y-y'|\le |x'|^{\beta+1},\\
        |y-y'|^{\frac{1}{\beta+1}}, & |y-y'|\ge |x'|^{\beta+1}.
    \end{cases}
\end{align}
Moreover, for $x$ in the support of $\nabla_L \phi_{\kappa^{-1}|x'|}$, we have $|x|\sim |x'|$. 

By volume property and the gradient estimates of the heat kernel, it is plain that
\begin{align*}
    \int_0^{|x|^2} \left|\nabla_L [\phi_{\kappa^{-1}|x'|}(x)]\right| \left|\nabla_L e^{-tL}(\xi,\eta)\right| \frac{dt}{\sqrt{t}} &\lesssim \int_0^{|x|^2} \frac{\chi_{|x|\sim |x'|}}{|x'|} \frac{e^{-\frac{d(\xi,\eta)^2}{ct}}}{|x|^{m\beta}(\sqrt{t})^{n+m}} \frac{dt}{t}\\
    &\lesssim |x'|^{-1-m\beta} \chi_{|x|\sim |x'|} \int_0^{|x'|^2} t^{-\frac{n+m}{2}-1} e^{-\frac{\sigma^2}{ct}} dt\\
    &\lesssim |x'|^{-1-m\beta} \sigma^{-n-m} e^{-\frac{\sigma^2}{c|x'|^2}} \chi_{|x|\sim |x'|}\\
    &\lesssim_\epsilon |x'|^{-1-m\beta+\epsilon} \sigma^{-n-m-\epsilon} \chi_{|x|\sim |x'|},\quad \forall \epsilon \ge 0\\
    &\lesssim \sigma^{-\mathcal{Q}-1} \chi_{|x|\sim|x'|}.
\end{align*}
On the other hand,
\begin{align*}
    \int_{|x|^2}^\infty \left|\nabla_L [\phi_{\kappa^{-1}|x'|}(x)]\right| \left|\nabla_L e^{-tL}(\xi,\eta)\right| \frac{dt}{\sqrt{t}} &\lesssim \int_{|x|^2}^\infty \frac{\chi_{|x|\sim |x'|}}{|x'|} \frac{e^{-\frac{d(x,y)^2}{ct}}}{(\sqrt{t})^{\mathcal{Q}}} \frac{dt}{|x|\sqrt{t}}\\
    &\lesssim |x'|^{-2} \chi_{|x|\sim |x'|} \int_{|x'|^2}^\infty e^{-\frac{\sigma^2}{ct}} t^{-\frac{\mathcal{Q}+1}{2}} dt\\
    &\lesssim |x'|^{-2} \sigma^{-\mathcal{Q}+1} \chi_{|x|\sim|x'|}.
\end{align*}
Next, for the second part of the integral, \cite{Gri_heatkernel} guarantees the following estimate for the time-derivative of the heat kernel: 
\begin{equation*}
    \left|\partial_t e^{-tL}(\xi,\eta) \right| \le \frac{C}{t V(\eta,\sqrt{t})} e^{-\frac{d(\xi,\eta)^2}{ct}}, \quad \forall \xi,\eta\in \mathbb{R}^{n+m},\quad \forall t>0,
\end{equation*}
where we also use the doubling condition to replace the ball $V(\xi,\sqrt{t})$ by $V(\eta,\sqrt{t})$.

Therefore, it follows by a similar argument as before that
\begin{align*}
    \int_0^\infty \left|\phi_{\kappa^{-1}|x'|}(x)\right| \left|L_{\xi} e^{-tL}(\xi,\eta) \right| \frac{dt}{\sqrt{t}} &\lesssim  \int_0^\infty \frac{t^{-\frac{3}{2}} \chi_{|x|\le c|x'|}}{V(\eta,\sqrt{t})} e^{-\frac{d(\xi,\eta)^2}{ct}} dt\\
    &\lesssim \int_0^{|x'|^2} \frac{ \chi_{|x|\le c|x'|}}{|x'|^{m\beta}(\sqrt{t})^{n+m}} e^{-\frac{\sigma^2}{ct}} \frac{dt}{t^{3/2}}\\
    &+ \int_{|x'|^2}^\infty \frac{ \chi_{|x|\le c|x'|}}{(\sqrt{t})^{\mathcal{Q}}} e^{-\frac{\sigma^2}{ct}} \frac{dt}{t^{3/2}}\\
    &\lesssim |x'|^{-m\beta+\epsilon} \sigma^{-n-m-1-\epsilon} \chi_{|x|\le c|x'|} + \sigma^{-\mathcal{Q}-1} \chi_{|x|\le c|x'|},\quad \forall \epsilon\ge 0\\
    &\lesssim \sigma^{-\mathcal{Q}-1} \chi_{|x|\le c|x'|}.
\end{align*}
The result follows by using \eqref{eq_delta}.
\end{proof}

With Lemma~\ref{lemma_key2} in mind, we define operators:
\begin{gather*}
    \mathcal{T}_1: u \mapsto |x| \int_{B_n(0,c|x|)^c} |x'|^{-\mathcal{Q}-1} \int_{B_m\left(y,|x'|^{\beta+1}\right)} |f(x',y')|dy' dx',\\
    \mathcal{T}_2: u \mapsto |x| \int_{B_n(0,c|x|)^c} \int_{B_m\left(y,|x'|^{\beta+1}\right)^c} |y-y'|^{-\frac{\mathcal{Q}+1}{\beta+1}}|f(x',y')| dy' dx',\\
    \mathcal{T}_3: u \mapsto |x| \int_{c_1|x|\le |x'|\le c_2|x|} |x'|^{-2} \int_{B_m\left(y,|x'|^{\beta+1}\right)^c} |y-y'|^{-\frac{\mathcal{Q}-1}{\beta+1}}|f(x',y')| dy' dx'.
\end{gather*}
Then, by Lemma~\ref{lemma_key2}, the bilinear form is bounded by
\begin{align}\label{eq_final}
    \left|\mathcal{B}(f,g)\right|\le C \int_{\mathbb{R}^{n+m}} \frac{|f(\xi)|}{|x|} \left|(\mathcal{T}_1+ \mathcal{T}_2 + \mathcal{T}_3)g(\xi)\right| d\xi.
\end{align}

\begin{lemma}\label{lemma_G_3333}
Under the assumptions of Theorem~\ref{thm_RR}, the following statements hold:

(1) $\mathcal{T}_1$ and $\mathcal{T}_2$ are bounded on $L^q$ for all $1<q<\infty$.

(2) $\mathcal{T}_3$ is bounded on $L^q$ for $\frac{n}{n-1}<q<\infty$.
\end{lemma}

\begin{proof}[Proof of Lemma~\ref{lemma_G_3333}]
By dyadic decomposition, we write
\begin{align}\label{eq_sum}
    |\mathcal{T}_1f(\xi)| \le  |x| \sum_{j\ge 0} \int_{B_n(0,c2^{j+1}|x|) \setminus B_n(0,c2^j|x|)} |x'|^{-\mathcal{Q}-1} \int_{B_m(y,|x'|^{\beta+1})} |f(\eta)| d\eta.
\end{align}
For the $j$th term in the sum, one easily obtains upper bound: 
\begin{align*}
    2^{-j(\mathcal{Q}+1)}|x|^{-\mathcal{Q}-1} \int_{B_n \left(0,c2^{j+1}|x| \right) \times B_m \left(y,c^{\beta+1} 2^{(j+1)(\beta+1)}|x|^{\beta+1} \right)} |f(\eta)| d\eta.
\end{align*}
By Lemma~\ref{lemma_G_volume}, the above integral is bounded by $\int_{B\left((0,y), c2^{j+1}|x|\right)}|f|\le C \left(2^{j+1}|x|\right)^{\mathcal{Q}} \mathcal{M}f(\xi)$. This implies that the $j$th term in the sum of \eqref{eq_sum} can be estimated by
\begin{align*}
    C 2^{-j(\mathcal{Q}+1)} |x|^{-1} 2^{(j+1)\mathcal{Q}} \mathcal{M}f(\xi) = C 2^{-j} |x|^{-1} \mathcal{M}f(\xi).
\end{align*}
Plug this into \eqref{eq_sum}. The result of $\mathcal{T}_1$ follows by maximal theorem.

The argument for $\mathcal{T}_2$ is similar. By applying dyadic decomposition twice, it suffices to treat the double sum:
\begin{align}\label{eq_G_T2}
    |x|^{-\mathcal{Q}} \sum_{j\ge 0} \sum_{i\ge 0} 2^{-j\frac{\mathcal{Q}+1}{\beta+1}} 2^{-i(\mathcal{Q}+1)} \int_{B_n \left((0,2^{i+1}|x| \right) \times B_m \left( y, 2^{j+1} 2^{(i+1)(\beta+1)} |x|^{\beta+1} \right)} |f|.
\end{align}
Use Lemma~\ref{lemma_G_volume} again. Each $i,j\ge 0$ in the above sum is bounded by
\begin{align*}
    C\mathcal{M}f(\xi) 2^{-j\frac{\mathcal{Q}+1}{\beta+1}} 2^{\mathcal{Q}\frac{j+1}{\beta+1}} 2^{-i(\mathcal{Q}+1)} 2^{\mathcal{Q}(i+1)} |x|^{\mathcal{Q}} = C |x|^{\mathcal{Q}} \mathcal{M}f(\xi) 2^{-\frac{j}{\beta+1}} 2^{-i}.
\end{align*}
Combining this and \eqref{eq_G_T2} concludes the proof for $\mathcal{T}_2$.  

To this end, we also need to estimate $\mathcal{T}_3$. Note that in the kernel of $\mathcal{T}_3$, $|x| \sim |x'|$. Therefore, it is enough to verify the $L^p$-boundedness of the following operator:
\begin{align*}
    u \mapsto |x|^{-1} \int_{B_n(0,c|x|)} \int_{B_m \left( y, |x|^{\beta+1} \right)^c} |y-y'|^{-\frac{\mathcal{Q}-1}{\beta+1}}|u(\eta)| d\eta,
\end{align*}
which is bounded on $L^p$ for all $n'<p<\infty$; see Lemma~\ref{lemma_good2}.

This completes the proof of Lemma~\ref{lemma_G_3333}.
\end{proof}

Now, we are in a position to conclude the proof of Theorem~\ref{thm_RR}. By \eqref{eq_final} and Lemma~\ref{lemma_G_3333}, 
\begin{align*}
    |\mathcal{B}(f,g)| \lesssim \left\| \frac{f(\xi)}{|x|}\right\|_p \|g\|_{p'}.
\end{align*}
The result follows by \cite[Theorem~3.3]{D'ambrosio}, which asserts that on the Grushin space $\mathbb{R}^{n+m}$ with $n\ge 2$ and $m\ge 1$, the following Hardy's inequality holds:
\begin{align*}
    \int_{\mathbb{R}^{n+m}} \left|\frac{u(\xi)}{|x|}\right|^p d\xi \lesssim \int_{\mathbb{R}^{n+m}} |\nabla_L u(\xi)|^p d\xi,\quad \forall 1<p<n,
\end{align*}
for all $u\in C_c^\infty(\mathbb{R}^{n+m})$.

The proof of Theorem~\ref{thm_RR} is now complete.

\section{Riesz transform on Grushin spaces}

In this section, we turn our attention to the boundedness of the Riesz transform associated with the Grushin operator:
\begin{align*}
    L = \Delta_x + |x|^{2\beta} \Delta_y,\quad (x,y)\in \mathbb{R}^{n+m},
\end{align*}
i.e. $\nabla_L L^{-1/2}$. We show that the Riesz transform is bounded on $L^p$ for all $1<p<n$, which concludes the proof of Theorem~\ref{thm_RR_Grushin}.

\begin{theorem}\label{thm_R}
Let $n\ge 2$, $m\ge 1$ and $\beta > 0$. Then the Riesz transform associated to the Grushin operator $L$ given by \eqref{eq_grushin_operator} is bounded on $L^p$ for all $1<p<n$ in the sense:
\begin{align}\label{eq_Riesz}
\|\nabla_L L^{-1/2} f\|_p \le C \|f\|_p, \quad 1<p<n
\end{align}
for all $f\in C_c^\infty(\mathbb{R}^n \setminus \{0\} \times \mathbb{R}^m)$.
\end{theorem}

\begin{proof}[Proof of Theorem~\ref{thm_R}]
By \cite[Theorem~8.1]{DS1}, it is know that \eqref{eq_Riesz} holds for all $1<p\le 2$. Hence, it suffices to consider $2<p<n$ and $n\ge 3$. Recall the domain:
\begin{align*}
\mathcal{D}_3 = \{(\xi,\eta)\in \mathbb{R}^{n+m}\times \mathbb{R}^{n+m}; |x'| \ge \kappa |x| \}.
\end{align*}
By Proposition~\ref{prop_R1} and Proposition~\ref{prop_R2}, we only need to consider the operator restricted to the domain $\mathcal{D}_3$ with kernel:
\begin{align*}
    \mathcal{R}_3(\xi,\eta) = \int_0^\infty \nabla_L e^{-tL}(\xi,\eta) \chi_{\mathcal{D}_3}(\xi,\eta) \frac{dt}{\sqrt{\pi t}}.
\end{align*}
Note that for $(\xi,\eta) \in \mathcal{D}_3$, we have $|x-x'|\sim |x'|$ and
\begin{align*}
    \sigma:= d(\xi,\eta) \sim |x'| + \frac{|y-y'|}{|x'|^{\beta} + |y-y'|^{\frac{\beta}{\beta+1}}} \sim \begin{cases}
        |x'|, & |y-y'|\le |x'|^{\beta+1},\\
        |y-y'|^{\frac{1}{\beta+1}}, & |y-y'|\ge |x'|^{\beta+1}.
    \end{cases}
\end{align*}
By a straightforward computation as in Lemma~\ref{lemma_kernel_R1}, the kernel attains upper bound: for $\epsilon \ge 0$
\begin{align*}
    |\mathcal{R}_3(\xi,\eta)| \lesssim \chi_{|x|\le c|x'|} \begin{cases}
        |x|^{-m\beta+\epsilon} |x'|^{-n-m-\epsilon} + |x|^{-1} |x'|^{-\mathcal{Q}+1}, & |y-y'|\le |x'|^{\beta+1},\\
        |x|^{-m\beta+\epsilon} |y-y'|^{-\frac{n+m+\epsilon}{\beta+1}} + |x|^{-1} |y-y'|^{-\frac{\mathcal{Q}-1}{\beta+1}}, & |y-y'|\ge |x'|^{\beta+1}.
    \end{cases}
\end{align*}
Observe that by choosing $\epsilon = m\beta +1$, the terms $\chi_{|x|\le c|x'|} |x|^{-m\beta+\epsilon} |x'|^{-n-m-\epsilon} \chi_{|y-y'|\le |x'|^{\beta+1}}$ and $\chi_{|x|\le c|x'|} |x|^{-m\beta+\epsilon} |y-y'|^{-\frac{n+m+\epsilon}{\beta+1}} \chi_{|y-y'|\ge |x'|^{\beta+1}}$ are nothing but kernels of $\mathcal{T}_1$ and $\mathcal{T}_2$ defined in Lemma~\ref{lemma_G_3333} respectively. Hence, by Lemma~\ref{lemma_G_3333}, it is enough to estimate the following two operators:
\begin{align*}
\mathcal{T}_4: u \mapsto \int_{\mathbb{R}^{n+m}} \frac{\chi_{|x|\le c|x'|}(x') \chi_{|y-y'|\le |x'|^{\beta+1}}(y')}{|x| |x'|^{\mathcal{Q}-1}} u(x',y') d\eta,
\end{align*}
and
\begin{align*}
\mathcal{T}_5: u \mapsto \int_{\mathbb{R}^{n+m}} \frac{\chi_{|x|\le c|x'|}(x') \chi_{|y-y'|\ge |x'|^{\beta+1}}(y')}{|x| |y-y'|^{\frac{\mathcal{Q}-1}{\beta+1}}} u(x',y') d\eta.
\end{align*}
We treat $\mathcal{T}_4$ first. Let $1<p<n$. Let $\mathcal{A}(\xi,\eta)$ be a potential in the form:
\begin{align}\label{eq_potential}
    \mathcal{A} = |x|^{\alpha_1} |x'|^{\alpha_2} |y-y'|^{\alpha_3},
\end{align}
where $\alpha_1,\alpha_2,\alpha_3 \in \mathbb{R}$ satisfy conditions:
\begin{align*}
    &(1)\quad \alpha_1 + \alpha_2 + \alpha_3(\beta+1) = 1 - \frac{\mathcal{Q}}{p}, \quad (2)\quad 1-\frac{n}{p} < \alpha_1 < 0,\\
    &(3)\quad -\frac{m}{p} < \alpha_3 < \frac{m}{p'}.
\end{align*}
Choosing $\alpha_2$ appropriately, such parameters always exist.

Next, let $h\in C_c^\infty(\mathbb{R}^{n+m})$ with $\|h\|_{p'}\le 1$. We have by duality, and then followed by Hölder's inequality
\begin{align*}
    \|\mathcal{T}_4f\|_p &= \sup_{\|h\|_{p'}\le 1} \int_{\mathbb{R}^{n+m}} \int_{\mathbb{R}^{n+m}} \frac{\chi_{|x|\le c|x'|}(x') \chi_{|y-y'|\le |x'|^{\beta+1}}(y')}{|x| |x'|^{\mathcal{Q}-1}} f(\eta) h(\xi) d\eta d\xi\\
    &\le \sup_{\|h\|_{p'}\le 1} \left( \int_{\mathbb{R}^{n+m}} \int_{\mathbb{R}^{n+m}} \frac{|f(\eta)|^p \mathcal{A}^p}{|x|^p} \chi_{|x|\le c|x'|}(x) \chi_{|y-y'|\le |x'|^{\beta+1}}(y) d\eta d\xi \right)^{\frac{1}{p}}\\
    &\times \left( \int_{\mathbb{R}^{n+m}} \int_{\mathbb{R}^{n+m}} \frac{|h(\xi)|^{p'}}{|x'|^{p'(\mathcal{Q}-1)} \mathcal{A}^{p'} } \chi_{|x|\le c|x'|}(x') \chi_{|y-y'|\le |x'|^{\beta+1}}(y') d\eta d\xi \right)^{\frac{1}{p'}}.
\end{align*}
Note that a straightforward calculation yields that
\begin{align*}
    \int_{\mathbb{R}^{n+m}} \frac{\mathcal{A}^p}{|x|^p} &\chi_{|x|\le c|x'|}(x) \chi_{|y-y'|\le |x'|^{\beta+1}}(y) d\xi = \int_{B_n(0,c|x'|)} |x|^{p(\alpha_1-1)} \int_{B_m\left(y',|x'|^{\beta+1}\right)} |x'|^{\alpha_2 p} |y-y'|^{\alpha_3 p} d\xi\\
    &\sim |x'|^{\alpha_2 p}  \int_{B_n(0,c|x'|)} |x|^{p(\alpha_1-1)}  \int_0^{|x'|^{\beta+1}} s^{\alpha_3 p + m -1} ds dx, \quad \left[ \alpha_3 > -m/p\right]\\
    &\sim |x'|^{\alpha_2 p + m(\beta+1) + \alpha_3 p (\beta+1)}  \int_0^{|x'|} s^{\alpha_1 p - p + n -1} ds, \quad \left[ \alpha_1 > 1 -n/p \right]\\
    &\sim |x'|^{\mathcal{Q}-p+p\left(\alpha_1+\alpha_2+\alpha_3(\beta+1)\right)} = 1. \quad  \left[\alpha_1 + \alpha_2 + \alpha_3(\beta+1) = 1- \mathcal{Q}/p\right]
\end{align*}
On the other hand,
\begin{align*}
    \int_{\mathbb{R}^{n+m}} &\frac{1}{|x'|^{p'(\mathcal{Q}-1)} \mathcal{A}^{p'}} \chi_{|x|\le c|x'|}(x') \chi_{|y-y'|\le |x'|^{\beta+1}}(y') d\eta\\
    &= \int_{B_n(0,c'|x|)^c} \frac{|x|^{-\alpha_1 p'}}{|x'|^{p'(\mathcal{Q}-1)+\alpha_2 p'}} \int_{B_m(y,|x'|^{\beta+1})} |y-y'|^{-\alpha_3 p'} dy' dx'\\
    &\sim \int_{B_n(0,c'|x|)^c} \frac{|x|^{-\alpha_1 p'}}{|x'|^{p'(\mathcal{Q}-1)+\alpha_2 p'}} \int_0^{|x'|^{\beta+1}} s^{m-\alpha_3 p' -1} ds dx', \quad \left[ \alpha_3 < m/p' \right]   \\
    &\sim |x|^{-\alpha_1 p'} \int_{|x|}^\infty s^{-p'(\mathcal{Q}-1+\alpha_2) + m(\beta+1) - \alpha_3 p' (\beta+1) + n -1} ds, \quad \left[\alpha_1 < 0\right]\\
    &\sim |x|^{-p'(\mathcal{Q}-1+\alpha_2) + m(\beta+1) - \alpha_3 p' (\beta+1) + n - \alpha_1 p'} = 1. \quad \left[ \alpha_1 + \alpha_2 + \alpha_3(\beta+1) = 1- \mathcal{Q}/p\right]
\end{align*}
It then follows by Fubini's theorem,
\begin{align*}
    \|\mathcal{T}_4f\|_p &\lesssim \sup_{\|h\|_{p'}\le 1} \left( \int_{\mathbb{R}^{n+m}} |f(\eta)|^p \left( \int_{\mathbb{R}^{n+m}} \frac{\mathcal{A}^p}{|x|^p} \chi_{|x|\le c|x'|}(x) \chi_{|y-y'|\le |x'|^{\beta+1}}(y) d\xi    \right) d\eta \right)^{\frac{1}{p}} \\
    &\times \left( \int_{\mathbb{R}^{n+m}} |h(\xi)|^{p'} \left(\int_{\mathbb{R}^{n+m}} \frac{1}{|x'|^{p'(\mathcal{Q}-1)} \mathcal{A}^{p'}} \chi_{|x|\le c|x'|}(x') \chi_{|y-y'|\le |x'|^{\beta+1}}(y') d\eta\right)   d\xi \right)^{\frac{1}{p'}}\\
    &\lesssim \sup_{\|h\|_{p'}\le 1} \|f\|_p \|h\|_{p'} \le \|f\|_p
\end{align*}
as desired. This completes the proof of the $L^p$-boundedness of $\mathcal{T}_4$ for all $1<p<n$.

To this end, we also need to consider $\mathcal{T}_5$. Let $\mathcal{A}$ be the potential defined in \eqref{eq_potential}. This time, we assume the parameters satisfy the following conditions:
\begin{align*}
    &(1) \quad \alpha_1+\alpha_2+\alpha_3(\beta+1) = \frac{1}{p'}, \quad (2) \quad -\frac{n-1}{p'(\beta+1)} < \alpha_3 < \frac{n-1}{p(\beta+1)},\\
    &(3)\quad \alpha_1, \alpha_2 <0.
\end{align*}
Note that the existence of these parameters is guaranteed by the assumption $1<p<n$. Similarly, we assume $h\in C_c^\infty(\mathbb{R}^{n+m})$ with $\|h\|_{p'}\le 1$, and consider the bilinear form:
\begin{align*}
    \int_{\mathbb{R}^{n+m}} \int_{\mathbb{R}^{n+m}} \frac{\chi_{|x|\le c|x'|}(x') \chi_{|y-y'|\ge |x'|^{\beta+1}}(y')}{|x| |y-y'|^{\frac{\mathcal{Q}-1}{\beta+1}}} f(\eta) h(\xi) d\eta d\xi.
\end{align*}
By Hölder's inequality and Fubini's theorem, the above is bounded by
\begin{align*}
    &\left(    \int_{\mathbb{R}^{n+m}} |f(\eta)|^p \int_{\mathbb{R}^{n+m}} \frac{\mathcal{A}^p}{|x|^p |y-y'|^{\frac{\mathcal{Q}-1}{\beta+1}}} \chi_{|x|\le c|x'|}(x) \chi_{|y-y'|\ge |x'|^{\beta+1}}(y) d\xi d\eta             \right)^{\frac{1}{p}}\\
    &\times \left(    \int_{\mathbb{R}^{n+m}} |h(\xi)|^{p'}   \int_{\mathbb{R}^{n+m}} \frac{1}{\mathcal{A}^{p'} |y-y'|^{\frac{\mathcal{Q}-1}{\beta+1}}} \chi_{|x|\le c|x'|}(x') \chi_{|y-y'|\ge |x'|^{\beta+1}}(y')  d\eta d\xi          \right)^{\frac{1}{p'}},
\end{align*}
and we only need to show that both inner integrals are uniformly bounded. Indeed, a straightforward computation gives that
\begin{align*}
    \int_{\mathbb{R}^{n+m}} &\frac{\mathcal{A}^p}{|x|^p |y-y'|^{\frac{\mathcal{Q}-1}{\beta+1}}} \chi_{|x|\le c|x'|}(x) \chi_{|y-y'|\ge |x'|^{\beta+1}}(y) d\xi\\
    &\sim |x'|^{\alpha_2 p} \int_{B_n(0,c|x'|)} |x|^{\alpha_1 p - p} \int_{|x'|^{\beta+1}}^\infty s^{p\alpha_3 - \frac{\mathcal{Q}-1}{\beta+1}+m-1} ds dx, \quad \left[ \alpha_3 < (n-1)/\left(p(\beta+1)\right) \right]  \\
    &\sim |x'|^{\alpha_2 p} \int_{0}^{|x'|} s^{\alpha_1 p -p +p\alpha_3 (\beta+1) - \mathcal{Q}+1+m(\beta+1)+n-1} ds,   \quad  \left[\alpha_2 < 0 \right] \\
    &\sim |x'|^{p(\alpha_1+\alpha_2+\alpha_3(\beta+1))-p+1} = 1. \quad \left[  \alpha_1+\alpha_2+\alpha_3(\beta+1) = 1/p'\right]
\end{align*}
Meanwhile,
\begin{align*}
    &\int_{\mathbb{R}^{n+m}} \frac{1}{\mathcal{A}^{p'} |y-y'|^{\frac{\mathcal{Q}-1}{\beta+1}}} \chi_{|x|\le c|x'|}(x') \chi_{|y-y'|\ge |x'|^{\beta+1}}(y')  d\eta\\
    &\sim |x|^{-p' \alpha_1} \int_{B_n(0,c'|x|)^c} |x'|^{-p' \alpha_2} \int_{|x'|^{\beta+1}}^\infty s^{-p' \alpha_3 - \frac{\mathcal{Q}-1}{\beta+1} +m-1} ds dx',   \quad \left[\alpha_3 > - (n-1)/\left(p'(\beta+1)\right) \right]   \\
    &\sim |x|^{-p' \alpha_1} \int_{|x|}^\infty s^{-p' \alpha_2 + m(\beta+1) -\mathcal{Q}+1 - p' \alpha_3(\beta+1) +n-1} ds,    \quad  \left[\alpha_1 < 0 \right] \\
    &\sim |x|^{-p'(\alpha_1+\alpha_2+\alpha_3(\beta+1))+1} = 1.  \quad  \left[\alpha_1+\alpha_2+\alpha_3(\beta+1) = 1/p'\right]
\end{align*}
This completes the proof of the $L^p$-boundedness of $\mathcal{T}_5$ for all $1<p<n$ and hence Theorem~\ref{thm_R}.

\end{proof}

Theorem~\ref{thm_RR_Grushin} is now complete by combining Theorem~\ref{thm_RR} and Theorem~\ref{thm_R}.

\begin{remark}
We note that Theorem~\ref{thm_R} may not be optimal. Indeed, by \cite{DS3} (see also \cite{JST}), it is known that in certain special cases, \eqref{R_p} holds for all $1<p<\infty$ on Grushin spaces. For this reason, in this note we do not give an endpoint estimate for $p=n$. However, to the best of the author's knowledge, Theorem~\ref{thm_R} remains significant in the general setting of these spaces.

In fact, in a recent article \cite{HeSikora}, the authors establish the following inequality for the Hodge projector:
\begin{align}\label{Hodge}
    \| d \Delta^{-1} \delta \omega \|_p \le C \| (I - \mathcal{H}) \omega \|_p,
\end{align}
where $\mathcal{H}$ denotes the orthogonal projection onto the harmonic part of a $1$-form, for some range of $p$, say $1 < p < p_0$, within the framework of manifolds with ends.  
It is conjectured that such an estimate holds on a broader class of manifolds and metric spaces with sub-Riemannian structure.  
This inequality is particularly powerful in the sense that once the space of $L^2$ harmonic $1$-forms, $\mathcal{H}_{L^2}^1$, is known to be trivial, then \eqref{Hodge} self-improves to
\begin{equation}\label{Hodge2}
    \| d \Delta^{-1} \delta \omega \|_p \le C \| \omega \|_p
\end{equation}
for a much larger range of $p$ (by duality and interpolation).  
Consequently, the range of boundedness of \eqref{R_p} can be obtained by a standard argument via \eqref{Hodge2} and \eqref{eq_RRp} (see, for instance, \cite[Lemma~0.1]{AC}). We expect that Theorem~\ref{thm_R} can be significantly strengthened, but a more detailed analysis will
be presented in a forthcoming project.
\end{remark}

\bigskip

{\bf Acknowledgments.} Part of this article is included in the author's Ph.D. thesis. I want to express my gratitude to my supervisor Adam Sikora for all the support and encouragement.


\bibliographystyle{abbrv}

\bibliography{references.bib}

\end{document}